\def\hex#1#2{
  	\draw #1 #2 +(0:1) \foreach \a in {60,120,180,240,300} { -- +(\a:1) } -- cycle;
 	\draw[fill=white] #1 #2 \foreach \a in {0,120,240} { +(\a:1) circle (1.2mm) } ;
  	\draw[fill=black] #1 #2 \foreach \a in {60,180,300} { +(\a:1) circle (1.2mm) } ;
	}
\def\hexnode#1#2{
  	\draw #1 #2 +(0:1) \foreach \a in {60,120,180,240,300} { -- +(\a:1) } -- cycle;
 	\draw[fill=white] #1 #2 \foreach \a in {0,120,240} { +(\a:1) circle (2mm) } ;
  	\draw[fill=black] #1 #2 \foreach \a in {60,180,300} { +(\a:1) circle (2mm) } ;
	}
\def\hexp#1#2{
  	\draw #1 #2 +(0:0.75) \foreach \a in {60,120,180,240,300} { -- +(\a:0.75) } -- cycle;
 	\draw[fill=black] #1 #2 \foreach \a in {0,120,240} { +(\a:0.75) circle (1mm) } ;
  	\draw[fill=white] #1 #2 \foreach \a in {60,180,300} { +(\a:0.75) circle (1mm) } ;
	}
\def\hexpop#1#2{
  	\draw #1 #2 +(0:0.75) \foreach \a in {60,120,180,240,300} { -- +(\a:0.75) } -- cycle;
 	\draw[fill=white] #1 #2 \foreach \a in {0,120,240} { +(\a:0.75) circle (1mm) } ;
  	\draw[fill=black] #1 #2 \foreach \a in {60,180,300} { +(\a:0.75) circle (1mm) } ;
	}
\def\hexap#1#2{
  	\draw #1 #2 +(0:1) \foreach \a in {60,120,180,240,300} { -- +(\a:1) } -- cycle;
 	\draw[fill=black] #1 #2 \foreach \a in {0,60,120,180,240,300} { +(\a:1) circle (1mm) } ; 
  	\draw[fill=white] #1 #2 \foreach \a in {0,60,120,180,240,300} { +(\a-20:0.9) circle (1mm) } ;
 	}
\def\oct#1#2{
  	\draw[rotate=22.5] #1 #2 +(0:1) \foreach \a in {45,90,135,180,225,270,315} { -- +(\a:1) } -- cycle;
 	\draw[rotate=22.5,fill=black] #1 #2 \foreach \a in {0,90,180,270} { +(\a:1) circle (1.2mm) } ;
  	\draw[rotate=22.5,fill=white] #1 #2 \foreach \a in {45,135,225,315} { +(\a:1) circle (1.2mm) } ;
	}
\def\octop#1#2{
  	\draw[rotate=22.5] #1 #2 +(0:1) \foreach \a in {45,90,135,180,225,270,315} { -- +(\a:1) } -- cycle;
 	\draw[rotate=22.5,fill=white] #1 #2 \foreach \a in {0,90,180,270} { +(\a:1) circle (1.2mm) } ;
  	\draw[rotate=22.5,fill=black] #1 #2 \foreach \a in {45,135,225,315} { +(\a:1) circle (1.2mm) } ;
	}
\def\octopo#1#2{
  	\draw[thick,rotate=22.5] #1 #2 +(0:1) \foreach \a in {45,90,135,180,225,270,315} { -- +(\a:1) } -- cycle;
 	\draw[thick,rotate=22.5,fill=white] #1 #2 \foreach \a in {0,90,180,270} { +(\a:1) circle (0.8mm) } ;
  	\draw[thick,rotate=22.5,fill=black] #1 #2 \foreach \a in {45,135,225,315} { +(\a:1) circle (0.8mm) } ;
	}
\def\L{0.5412}
\def\Square#1#2{
  	\draw[rotate=45] #1 #2 +(0:\L) \foreach \a in {90,180,270} { -- +(\a:\L) } -- cycle;
 	\draw[rotate=45,fill=black] #1 #2 \foreach \a in {0,180} { +(\a:\L) circle (1.2mm) } ;
  	\draw[rotate=45,fill=white] #1 #2 \foreach \a in {90,270} { +(\a:\L) circle (1.2mm) } ;
	}
\def\RootOfThree{1.732050}
\def\RootOfSeven{2.6457513}
\def\AA{40.893}
\def\hh{0.86603}	
\newcommand{\Span}[1]{\left<#1\right>}
\newcommand{\dD}{\operatorname{D}}
\newcommand{\dual}{\vee}
\newcommand{\height}{\operatorname{ht}}
\newcommand{\cEtilde}{\widetilde{\cE}}
\newcommand{\BD}{\operatorname{BD}}
\newcommand{\blue}{}
\newcommand{\red}{}
\title{Dimer models and group actions}
\author{Akira Ishii, \'Alvaro Nolla de Celis, Kazushi Ueda}
\date{}
\begin{document}

\maketitle

\begin{abstract}
We construct a consistent dimer model
having the same symmetry as its characteristic polygon.
This produces examples of non-commutative crepant resolutions
of non-toric non-quotient Gorenstein singularities in dimension 3.
\end{abstract}

%\tableofcontents

\section{Introduction}
 \label{sc:introduction}

A \emph{dimer model}
is a bicolored graph on a real 2-torus
encoding the information of a quiver with relations.
Dimer models are originally introduced in 1930s
\cite{Fowler-Rushbrooke}
as statistical mechanical models
of diatomic molecules,
which contain the Ising model as a special case.
See e.g. \cite{Baxter_ESMSM,Kenyon_IDM}
and references therein for this aspect of dimer models.
More recently,
string theorists has discovered the relation
between dimer models and toric Calabi-Yau 3-folds
\cite{Hanany-Kennaway_DMTD,Franco-Hanany-Vegh-Wecht-Kennaway_BDQGT,Franco-Hanany-Martelli-Sparks-Vegh-Wecht_GTTGBT,Hanany-Vegh},
and many works has been done
to explore the relation between dimer models and
various branches of mathematics,
such as Donaldson-Thomas theory
\cite{Szendroi_NCDT,Mozgovoy-Reineke},
Calabi-Yau algebras
\cite{Broomhead,Davison,Ishii-Ueda_CCDM,Bocklandt_CCDM,MR3010162},
volumes of toric Sasaki-Einstein 5-manifolds
\cite{Martelli-Sparks-Yau_GDAM,Butti-Zaffaroni_TGQGT,Butti-Zaffaroni_RTD,Kato_ZFSFT},
moduli spaces of quiver representations
\cite{Franco-Vegh_MSGTDM,Ishii-Ueda_08},
the McKay correspondence
\cite{Ishii-Ueda_DMSMC,MR3319545},
exceptional collections
\cite{Hanany-Herzog-Vegh_BTEC,Ishii-Ueda_DMEC},
and mirror symmetry
\cite{Feng-He-Kennaway-Vafa,Ueda-Yamazaki_NBTMQ,Ueda-Yamazaki_toricdP,Futaki-Ueda_A-infinity}.
%and integrable systems
%\cite{Goncharov-Kenyon_DCIS},

The \emph{characteristic polygon}
$\Delta$
of a dimer model $G$
is a convex lattice polygon
obtained from the dimer model
in a purely combinatorial way.
When $G$ satisfies a mild condition
called \emph{non-degeneracy},
the moduli space of representations of the quiver
associated with the dimer model
is a toric variety,
and
the convex hull
of the primitive generators of the one-dimensional cones
of the corresponding fan
coincides with $\Delta$
\cite{Ishii-Ueda_08}.
When $G$ satisfies a stronger condition
called \emph{consistency},
then the path algebra $\bC \Gamma$
of the associated quiver with relations $\Gamma$
is a \emph{non-commutative crepant resolution}
\cite{MR2077594}
of the affine toric variety $X_\Delta$
associated with $\Delta$.

%It is defined as a generating function
%of the \emph{height change},
%%obtained as the determinant of a matrix
%%called the \emph{Kasteleyn matrix},
%and determines the thermal behavior
%of the dimer model
%\cite{Kasteleyn,Kenyon-Okounkov-Sheffield}.
%The Newton polygon of the characteristic function
%is called the {\em characteristic polygon}.
%The characteristic polygon coincides
%with the convex hull
%of the primitive generators of the one-dimensional cones
%describing the toric variety
%associated with a dimer model

Let $H$ be a finite subgroup of $\GL(2,\bZ)$
acting naturally on the lattice
where the characteristic polygon $\Delta$ lives.
When $\Delta$ is invariant under this action,
then we can ask if the action can be `lifted'
to the dimer model $G$.
In this paper, we introduce the notion
of a \emph{symmetric dimer model}
with respect to the action of $H$,
and prove the following:

\begin{theorem} \label{th:main}
For any finite subgroup $H$ of $\GL(2,\bZ)$ and
any $H$-invariant lattice polygon $\Delta$,
there is a consistent dimer model $G$
which is symmetric with respect to the action of $H$ and has $\Delta$ as its characteristic polygon.
\end{theorem}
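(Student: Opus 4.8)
The plan is to translate the problem into the language of \emph{zig-zag (alternating strand) diagrams} on the two-torus and to carry out the construction $H$-equivariantly. Recall from the works cited above (in particular \cite{Ishii-Ueda_08,Broomhead,Bocklandt_CCDM}) that a consistent dimer model is equivalent to a \emph{minimal} collection of oriented curves (strands) on a torus $T$ whose complementary regions are two-colourable, in which no strand self-intersects and no two strands cross twice in the same sense; moreover the homology classes $[z_i]\in H_1(T,\bZ)\cong\bZ^2$ of the strands are, after a fixed $90^\circ$ rotation, exactly the primitive edge vectors of the characteristic polygon $\Delta$, each counted with multiplicity equal to the lattice length of the corresponding edge. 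Thus it suffices to produce such a minimal strand diagram which is invariant under the action of $H$. Since $\Delta$ is $H$-invariant, $H$ permutes its edges, hence permutes the required multiset of homology classes compatibly with its action on $H_1(T,\bZ)$; I let $H$ act on $T$ through the same matrices in $\GL(2,\bZ)$ that describe its action on the lattice in which $\Delta$ lives.

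First I would realise each required strand as a straight geodesic. For every edge $e$ of $\Delta$ I take a family of $\ell(e)$ equally spaced parallel geodesics on $T$ in the class dual to $e$; running over all edges produces the full multiset of strands. Straight geodesics meet in the minimal number of points predicted by the intersection pairing and never self-intersect, so the only way minimality can fail is through \emph{triple (or higher) points}, where three or more strands pass through a common point. For a generic choice of the offsets of the parallel families all such coincidences are avoided (this already recovers the classical non-equivariant existence statement); the new point is to choose these offsets $H$-equivariantly. Because the edges, and hence the parallel families, are permuted in orbits by $H$, one fixes the offsets freely on one representative of each orbit and propagates by the group action, producing an $H$-invariant configuration of straight strands.

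The difficulty is that $H$-invariance and genericity are in tension precisely at the fixed loci of $H$ on $T$. A nontrivial rotation in $H$ has isolated fixed points (four for $-I$, correspondingly fewer for the order $3,4,6$ rotations), and an $H$-invariant configuration is forced to send several strands through such a point, creating a symmetric multiple crossing that violates minimality; a reflection has a fixed circle, along which similar degeneracies occur. The heart of the proof is therefore a local, equivariant resolution: near each fixed point one replaces the symmetric multiple crossing by a small $H'$-symmetric cluster of simple crossings, where $H'$ is the stabiliser of the point, without introducing self-intersections or repeated crossings. Because the finite subgroups of $\GL(2,\bZ)$ are classified --- the cyclic groups $C_1,C_2,C_3,C_4,C_6$ and the dihedral groups $D_1,D_2,D_3,D_4,D_6$ --- each possible stabiliser is one of these, so the existence of such a resolution is a finite check, with the local models built from the hexagonal and square tilings supplying the required symmetric gadgets. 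I expect this equivariant resolution at the fixed loci to be the main obstacle, and the step that genuinely uses the notion of a symmetric dimer model.

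Finally, from an $H$-invariant minimal strand diagram with the prescribed homology data I would read off the dimer model $G$ in the usual way, its nodes being the strand crossings and its two colours coming from the two-colouring of the complementary regions. By construction $G$ is symmetric with respect to $H$; it is consistent because the diagram is minimal; and its characteristic polygon is $\Delta$ because the strand homology classes are the rotated edge vectors of $\Delta$. As a sanity check one verifies that in the simplest cases --- $\Delta$ a triangle with its $C_3$ or $D_3$ symmetry realised by the honeycomb, or a square with its $D_4$ symmetry realised by the square tiling --- the procedure returns the standard symmetric dimer models, which establishes the theorem.
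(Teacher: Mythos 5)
Your strategy---realise the zigzag data as an $H$-invariant arrangement of straight geodesics and read the dimer model off the arrangement---is essentially an equivariant version of the Hanany--Vegh ``fast inverse algorithm'', and it rests on a step that is not true as stated. For an arrangement of oriented curves on $T$ to define a bicolored graph at all, it is not enough that no strand self-intersects and that no two strands cross twice in the same direction on the universal cover: one also needs every complementary region to be either consistently oriented (these become the black and white nodes) or alternating (these become the faces). For a generic arrangement of straight geodesics with only double crossings this region-typing condition can fail---a region may have a boundary pattern such as source, source, sink, sink---and no choice of offsets is guaranteed to repair it. This is precisely why the non-equivariant existence theorem is not a one-line genericity argument but requires the algorithms of Gulotta \cite{Gulotta}, Stienstra, or the special McKay construction of \cite{Ishii-Ueda_DMSMC}; your parenthetical claim that genericity ``already recovers the classical non-equivariant existence statement'' is the gap. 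The conditions you do verify (minimal intersection numbers, no self-intersections) are necessary for consistency of a dimer model that already exists, not sufficient for the arrangement to produce one. In addition, the ``local equivariant resolution'' at the fixed loci of $H$, which you yourself identify as the heart of the matter, is left as an unspecified finite check; it would have to preserve both the global region-typing and the no-repeated-crossing condition on the universal cover, and for reflections (whose fixed locus is a circle meeting many strands, and which must exchange the two node colours) no local model is offered.

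For comparison, the paper avoids arrangements of curves altogether: it starts from an explicit consistent symmetric dimer model whose polygon $\Deltatilde$ contains $\Delta$ (hexagonal, square and octagon--square tilings, enlarged by passing to covers as in \pref{lm:UY} and trimmed by \pref{pr:Gulotta}), and then removes $H$-orbits of corners of $\Deltatilde$ one at a time by the edge-deletion operation of \cite{Ishii-Ueda_DMSMC}, checking in \pref{pr:chop0} that the operations at the different corners of an orbit are either disjoint or individually symmetric, with a case-by-case analysis over the conjugacy classes of finite subgroups of $\GL(2,\bZ)$. If you want to salvage your approach, you would need to replace ``generic straight lines'' by an $H$-equivariant version of Thurston's triple-point-diagram machinery (or of Gulotta's merging algorithm), which is a substantial undertaking rather than a genericity argument.
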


If a dimer model $G$ is symmetric
with respect to the action of a finite subgroup $H$ of $\GL(2,\bZ)$,
then $H$ acts on the associated quiver $\Gamma$ with relations.
There are associated actions of $H$ on $X_\Delta$ and $\bC\Gamma$
which are twisted as in \eqref{eq:twist_on_X} and \eqref{eq:twist_on_Gamma}
respectively.
Notice that if $H$ is a reflection group of order $2$ (see Remark \ref{rem:choice}), then the twist \eqref{eq:twist_on_X} depends on the choice of the origin in $\Delta$.
Moreover, the twist \eqref{eq:twist_on_Gamma} depends on the choice of an $H$-invariant
perfect matching corresponding to the origin.
With respect to these twisted actions, we prove:

\begin{theorem} \label{th:NCCR}
If a consistent dimer model $G$ is symmetric
with respect to the action of a finite subgroup $H$ of $\GL(2,\bZ)$,
then the crossed product algebra $\bC \Gamma \rtimes H$
is a non-commutative crepant resolution
of $X_\Delta/H$.
\end{theorem}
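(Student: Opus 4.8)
The plan is to deduce the statement from a general descent principle for crossed products: if $A$ is a non-commutative crepant resolution of a normal Gorenstein domain $R$, and a finite group $H$ with $|H|$ invertible acts on $A$ compatibly with an action on $R$, then $A \rtimes H$ is a non-commutative crepant resolution of the invariant ring $R^H$, provided the action is suitably unramified in codimension one and $R^H$ is again Gorenstein. I set $R = \bC[X_\Delta]$, the coordinate ring of the affine Gorenstein toric threefold, so that $X_\Delta/H = \operatorname{Spec} R^H$ for the twisted action \eqref{eq:twist_on_X}. First I would record that $R^H$ is a normal domain (invariants of a normal domain under a finite group), and that the twist is chosen exactly so that $H$ fixes a generator of the canonical module $\omega_R \cong R$, whence $R^H$ is Gorenstein.

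Next I use consistency. By \cite{MR2077594}, as recalled in the introduction, $\bC\Gamma$ is a non-commutative crepant resolution of $R$; concretely $\bC\Gamma = \operatorname{End}_R(M)$ for a reflexive $R$-module $M$, the direct sum of the rank-one reflexive modules indexed by the vertices of $\Gamma$. Symmetry of $G$ makes $H$ act on $\Gamma$, hence permute the vertices and arrows, which endows $M$ with an $H$-equivariant structure lifting the action on $R$; the twist \eqref{eq:twist_on_Gamma} attached to the $H$-invariant perfect matching of the origin is what pins this lift down and makes it compatible with \eqref{eq:twist_on_X}. The heart of the argument is then the skew-group identity
\[
\bC\Gamma \rtimes H \;=\; \operatorname{End}_R(M) \rtimes H \;\cong\; \operatorname{End}_{R^H}(M),
\]
where on the right $M$ is regarded as an $R^H$-module. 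I would prove this via the natural $R^H$-algebra homomorphism, observing that both sides are reflexive $R^H$-modules (endomorphism algebras of reflexive modules, and crossed products thereof, are reflexive over the normal base), so it suffices to check that the map is an isomorphism after localizing at the height-one primes of $R^H$; there the action is unramified and the statement reduces to the classical Auslander isomorphism in the \'etale-local situation.

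With the identification in hand, $\operatorname{End}_{R^H}(M)$ exhibits $\bC\Gamma \rtimes H$ as the endomorphism ring of the reflexive $R^H$-module $M$, so it remains to verify the two homological conditions of Van den Bergh. For the maximal Cohen-Macaulay property: as an $R^H$-module, $\bC\Gamma \rtimes H \cong (\bC\Gamma)^{\oplus |H|}$, and $\bC\Gamma$ is maximal Cohen-Macaulay over $R$ by consistency; since $R$ is finite over $R^H$ with $\dim R = \dim R^H$, the maximal Cohen-Macaulay property transports from $R$ to $R^H$, so $\bC\Gamma \rtimes H$ is maximal Cohen-Macaulay over $R^H$. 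For finite global dimension: because $|H|$ is invertible in $\bC$, the category of $(\bC\Gamma \rtimes H)$-modules identifies with that of $H$-equivariant $\bC\Gamma$-modules, and a standard averaging argument gives $\operatorname{gldim}(\bC\Gamma \rtimes H) = \operatorname{gldim}(\bC\Gamma) < \infty$. Together these show that $\bC\Gamma \rtimes H$ is a non-commutative crepant resolution of $X_\Delta/H$.

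I expect the genuine obstacle to be the reflection case, where $H$ contains an order-two element acting on $\bZ^2$ with determinant $-1$. Such an element acts on $X_\Delta$ as a pseudo-reflection, fixing a divisor, so the action is ramified in codimension one and the naive quotient is neither Gorenstein nor amenable to the Auslander isomorphism. This is precisely the situation that the twists \eqref{eq:twist_on_X} and \eqref{eq:twist_on_Gamma} are designed to repair: the dependence on the choice of origin (Remark \ref{rem:choice}) and on the $H$-invariant perfect matching is there to convert the problematic reflection action into one that fixes the volume form and is unramified in codimension one on the relevant reflexive modules. Verifying carefully that the twisted action does make $R^H$ Gorenstein and does validate the codimension-one reduction in the displayed isomorphism is where the main work lies; the toric description of $X_\Delta$ and the explicit combinatorics of the perfect matching indexing the origin should make this checkable at each height-one prime.
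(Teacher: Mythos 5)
Your route is essentially the one the paper takes: reduce to Van den Bergh's criteria (reflexivity of the module, the identification $\bC\Gamma\rtimes H\cong\End_{R^H}(E)$, the Cohen--Macaulay property, finite global dimension), prove the key isomorphism by exhibiting the natural map and checking it away from a closed subset of codimension two, and transport the last two properties along $R\supset R^H$ and the skew-group construction exactly as you do (your $M$ is the paper's $E=\tau_*\cE$). The one genuine gap sits precisely where you write that ``the main work lies'': you never verify that the twisted action is free in codimension one, and without that neither the height-one reduction of $\End_R(E)\rtimes H\to\End_{R^H}(E)$ nor the Gorenstein property of $R^H$ is available. The paper closes this in one line: let $\Utilde\subset\Spec R$ be the smooth, $H$-free locus and $U=\Utilde/H$; since the twist \eqref{eq:twist_on_X} makes the canonical bundle $H$-equivariantly trivial (phrased in the paper as $K_{\cM_\theta}\cong\cO_{\cM_\theta}$ in $\coh^H(\cM_\theta)$), no non-trivial element of $H$ can fix a divisor, because a pseudo-reflection acts by a non-trivial root of unity on the canonical bundle along its fixed divisor; hence $\codim(\Spec R^H\setminus U)\ge 2$. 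Over a point $P\in U$ the fibre of the comparison map is the elementary identity sending $g\ltimes\End_\bC(E|_Q)$ onto $\Hom_\bC(E|_Q,E|_{gQ})$ for the free orbit $\pi^{-1}(P)$, which is the concrete form of the ``classical Auslander isomorphism'' you invoke.

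A secondary omission: you assert that the symmetry of $G$ together with the twist \eqref{eq:twist_on_Gamma} endows $E$ with an $H$-equivariant structure lifting \eqref{eq:twist_on_X}. The paper proves this as a separate lemma by putting the twisted $H$-action on the parameter scheme of the GIT presentation of $\cM_\theta$, letting the semidirect product of $H$ with the gauge group act on the trivial bundle $\bigoplus_v V_v\otimes\cO$, and descending to the tautological bundle $\cE$; the compatibility of the two twists is exactly what the choice of the $H$-invariant perfect matching $D_0$ buys, and it does require an argument rather than a declaration. With these two points supplied, your proof and the paper's coincide.
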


These two theorems imply the existence
of non-commutative crepant resolutions
of $X_\Delta/H$ which are not necessarily toric and
not necessarily quotient singularities.
This in turn implies the existence of crepant resolutions
by \cite{MR1893007,MR2057015,MR2077594},
which can also be shown directly
by first taking an $H$-invariant unimodular triangulation of $\Delta$
(which one can find by drawing line segments between the origin
and the corners of $\Delta$ to triangulate $\Delta$,
and then refining it to a unimodular triangulation)
to obtain an $H$-equivariant crepant resolution $Y$ of $X_\Delta$,
and then taking the Hilbert scheme $H\text{-Hilb}(Y)$.
It is an interesting problem to see if
every projective crepant resolution of $X_\Delta/H$ is obtained
as moduli of representations of $\bC \Gamma \rtimes H$
just as in \cite{MR2078369,MR3532120}.

%Let $Z_\Delta$ be the 2-dimensional toric Fano stack
%whose fan polytope (i.e., the convex hull of the primitive generators
%of one-dimensional cones in the stacky fan)
%is given by $\Delta$.
%Then
While the path algebra
of the quiver with relations
associated with a dimer model
is a 3-dimensional generalization
of that of the McKay quiver for a Kleinian singularity of type $A$,
the crossed product algebra $\bC \Gamma \rtimes H$
associated with a symmetric dimer model
is a 3-dimensional generalization
of that of type $D$,
and it is an interesting problem
to decide which constructions
on dimer models generalize to dimer models with group actions.
For example,
if we let $Z_\Delta$ denote the 2-dimensional toric Fano stack
whose fan polytope (i.e., the convex hull of the primitive generators
of one-dimensional cones in the stacky fan)
is given by $\Delta$,
then
one can show the existence
of a full strong exceptional collection
of vector bundles
on the stack quotient $\ld Z_\Delta /H \rd$
along the lines of
%\pref{th:NCCR} and
\cite[Theorem 1.1]{Ishii-Ueda_DMEC}.
%show that
%show the following:
%
%\begin{theorem} \label{th:EC}
%For any finite subgroup $H$ of $\GL(2,\bZ)$ and
%any $H$-invariant lattice polygon $\Delta$,

%whose total morphism algebra can be described explicitly.
%\end{theorem}
%
%The full strong exceptional collection in \pref{th:EC}
%and its total morphism algebra can be described explicitly
%in terms of the quiver with relations
%associated with an $H$-symmetric dimer model
%and a perfect matching on it.

This paper is organized as follows:
In Section \ref{sc:preliminaries},
we briefly recall basic definitions and results on dimer models.
More details can be found,
e.g., in \cite{MR2423955,MR3509904}
or references cited.
In Section \ref{sc:action},
we introduce the notion of a symmetric dimer model $G$
with respect to a finite subgroup $H$ of $\GL(2,\bZ)$
acting on the real 2-torus,
and discuss a quiver description
of the crossed product algebra $H \ltimes \bC \Gamma$
with the path algebra $\bC \Gamma$
of the associated quiver with relations.
After recalling the classification
of finite subgroups of $\GL(2,\bZ)$ in Section \ref{sc:subgroups},
we give an outline of the proof of Theorem \ref{th:main} in Section \ref{sc:main},
and a case-by-case analysis in Sections
\ref{sc:cyclic},
\ref{sc:reflection}, and
\ref{sc:dihedral}.
To construct symmetric and consistent dimer models, we adopt the method
in \cite{Ishii-Ueda_DMSMC}.
The proof of Theorem \ref{th:NCCR}
is given
in Section \ref{sc:NCCR}.
In Section \ref{sc:wallpaper},
we digress from the main subject of this paper
and discuss symmetries of dimer models
under wallpaper groups.

\emph{Acknowledgement}:
We thank the anonymous referee for reading the manuscript carefully
and suggesting many improvements.
A.~I.~ is partially supported
by Grant-in-Aid for Scientific Research
(24540041, 15K04819).
K.~U.~is partially supported
by Grant-in-Aid for Scientific Research
(15KT0105, 16K13743, 16H03930). A.~N.~is supported by the Spanish MINECO (MTM2015-65968-P).

\section{Preliminaries} \label{sc:preliminaries}
\subsection{Dimer models and characteristic polygons}
 \label{sc:dimer}

Let $N$ be a free abelian group of rank 2 and
$M \coloneqq \Hom(N, \bZ)$ be the dual lattice.
We write the real 2-plane
and the real 2-torus
associated with $M$ as
$M_\bR \coloneqq M \otimes \bR$
and
$T \coloneqq M_\bR/M$
respectively.
A \emph{bicolored graph}
$G = (B, W, E)$ on $T$ consists of
\begin{itemize}
 \item
a finite set $B \subset T$ of \emph{black nodes},
 \item
a finite set $W \subset T$ of \emph{white nodes}, and
 \item
a finite set $E$ of \emph{edges},
consisting of embedded closed intervals $e$ on $T$
such that one boundary of $e$ belongs to $B$
and the other boundary belongs to $W$,
\end{itemize}
such that
any edge can intersect another edge
only at its boundary.
The \emph{valence} of a node
is the number of edges adjacent to that node.
A \emph{face} of $G$
is a connected component of $T \setminus \cup_{e \in E} e$.
A bicolored graph $G = (B,W,E)$ on $T$ is a \emph{dimer model}
if
\begin{itemize}
 \item
there is no univalent node, and
 \item
every face of $G$
is simply-connected.
\end{itemize}
Although
a dimer model may have divalent nodes in general,
as explained in \cite[Section 6.1]{Ishii-Ueda_DMSMC},
we can and will assume that
there are no divalent nodes
for the purpose of this paper.
% one can remove any divalent node by merging the two nodes
% connected by a divalent node
% as in \pref{fg:divalent_node}
% (as long as the two nodes are distinct, which is always the case
% for non-degenerate dimer models as explained in \cite[Section 6.1]{Ishii-Ueda_DMSMC}).
% In this paper,
% we assume that a dimer model does not have
% any divalent node
% by performing this operation if necessary.
% As explained in 

% \begin{figure}[h]
% \centering
% \begin{tikzpicture}
% 	\draw (-1,-1) -- (0,0) -- (-1,1) -- (0,0) -- (2,0) -- (3,-1) -- (2,0) -- (3,1);
% 	\draw[fill=white] (0,0) circle (1.2mm);
% 	\draw[fill=black] (1,0) circle (1.2mm);
% 	\draw[fill=white] (2,0) circle (1.2mm);
% 	\draw [dotted,bend right=-45] (-0.5,-0.5) to node[pos=0.5]  {} (-0.5,0.5);
% 	\draw [dotted,bend right=45] (2.5,-0.5) to node[inner sep=1.5pt,pos=0.5]  {} (2.5,0.5);
% 	\draw[->] (4,0) -- (5,0); 

% 	\draw (6,-1) -- (7,0) -- (8,-1) -- (6,1) -- (7,0) -- (8,1);
% 	\draw[fill=white] (7,0) circle (1.2mm);
% 	\draw [dotted,bend right=-45] (6.5,-0.5) to node[inner sep=1.5pt,pos=0.5]  {} (6.5,0.5);
% 	\draw [dotted,bend right=45] (7.5,-0.5) to node[inner sep=1.5pt,pos=0.5]  {} (7.5,0.5);	
% %	\node at (0,0) {\gray 1};
% \end{tikzpicture}
%    \caption{Removing a divalent node.}
%    \label{fg:divalent_node}
% \end{figure}  
% %%%%%%%%%%%%%%%%%%%%%%%%%%%%%

A \emph{perfect matching} is a subset $D \subset E$
of the set of edges such that
for any node $n \in B \sqcup W$,
there is a unique edge $e \in D$
adjacent to $n$.
A dimer model is said to be \emph{non-degenerate}
if any edge is contained in some perfect matching.
For a pair $(D, D_0)$ of perfect matchings,
one can associate an element
$\height(D, D_0) \in H^1(T,\bZ) \cong N$
called the \emph{height change}
(cf. e.g. \cite{Ishii-Ueda_08}).
Fix a perfect matching $D_0$
and call it the \emph{reference matching}.
The lattice polygon $\Delta \subset N_\bR$
obtained as the convex hull
of the set
$
 \lc \height(D, D_0) \relmid D \text{ is a perfect matching} \rc
$
of height changes is called
the \emph{characteristic polygon}.
If we take a different perfect matching $D_1$ as the reference matching,
the resulting characteristic polygon $\Delta'$ is related to $\Delta$
by translation by $h(D_1,D_0)$.

A \emph{zigzag path} is a periodic sequence
$(e_i)_{i \in \bZ}$ of edges,
considered up to translation of $i$,
which makes a maximum turn to the right on a white node
and to the left on a black node.
%We identify zigzag paths
%related by translations of $i$.
A pair of zigzag paths are said to \emph{intersect}
if they share a common edge.
Such an edge will be called an intersection `point'
of the pair of zigzag paths.
The homology class $[z] \in H_1(T,\bZ) \cong M$
of a zigzag path
is called its \emph{slope}.
A dimer model on $T=M_{\bR}/M$ can be pulled back to the universal cover $M_{\bR}$ of $T$ as a doubly periodic bicolored graph and one can consider zigzag paths on $M_{\bR}$.
Zigzag paths on the universal cover will be used when we will define the notion of consistency of a dimer model.

Let $r$ be the number of zigzag paths with non-zero slopes, and
$\{ z_i \}_{i=1}^r$ be the set of such zigzag paths.
A \emph{zigzag polygon} is a convex lattice polygon in $N_\bR$
defined up to translation
by the condition that the multiset of primitive outward normal vectors
to primitive side segments of the polygon
is equal to the multiset
$([z_i])_{i=1}^r$
of slopes of zigzag paths with non-zero slopes.
Here,
a \emph{primitive side segment}
of a lattice polygon
is a line segment on the boundary of the polygon
bounded by a pair of lattice points
containing no lattice point in the interior.
For any dimer model,
the zigzag polygon is contained in the characteristic polygon
\cite[Corollary 1.2]{Beil-Ishii-Ueda_CDM}.
%A pair of lattice polygons is said to \emph{coincide}
%if they are equal up to translation.

%Now we recall the definition of the consistency condition
%for dimer models:

%\begin{definition}[{\cite[Definition 5.2]{Ishii-Ueda_DMSMCv1}}] \label{df:consistency}
A dimer model is {\em consistent}
%\cite{Ishii-Ueda_CCDM,Bocklandt_CCDM}
if
\begin{itemize}
% \item
%there is a perfect matching,
 \item
there is no homologically trivial zigzag path,
 \item
no zigzag path on the universal cover $M_\bR$ of $T$
has a self-intersection, and
 \item
no pair of zigzag paths on the universal cover $M_\bR$
intersect each other
in the same direction more than once.
%\item
%there is a pair of zigzag paths whose slopes are linearly independent.
\end{itemize}
%\end{definition}

\begin{figure}[h]
\centering
\begin{tikzpicture}
	\draw [->,thick,bend right=75] (0,0) to node[inner sep=1.5pt,pos=0.5]  {} (0,3);
	\draw [->,thick,bend right=-75] (1,0) to node[inner sep=1.5pt,pos=0.5]  {} (1,3);

	\draw [<-,thick,bend right=75] (3,0) to node[inner sep=1.5pt,pos=0.5]  {} (3,3);
	\draw [->,thick,bend right=-75] (4,0) to node[inner sep=1.5pt,pos=0.5]  {} (4,3);
\end{tikzpicture}
   \caption{Intersections in the same direction (left) and
    the opposite directions (right).}
   \label{fg:intersection}
\end{figure}

Examples of a pair of curves
intersecting in the same and the opposite direction
are shown in the left and the right of Figure \ref{fg:intersection}
respectively.
%\begin{figure}
%\begin{minipage}{.5 \linewidth}
%\centering
%\input{intersection1.pst}
%\caption{Intersecting in the same direction}
%\label{fg:intersection1}
%\end{minipage}
%\begin{minipage}{.5 \linewidth}
%\centering
%\input{intersection2.pst}
%\caption{Intersecting in the same direction}
%\label{fg:intersection2}
%\end{minipage}
%\end{figure}
See \cite{Ishii-Ueda_CCDM,Bocklandt_CCDM} for more
on consistency conditions for dimer models.
In particular,
it is shown in \cite[Proposition 4.4]{Ishii-Ueda_CCDM}
that a dimer model is consistent
if and only if it is \emph{properly-ordered}
in the sense of Gulotta
\cite{Gulotta}.
%The consistency condition is equivalent to cancellativity:
%
%\begin{theorem}[{\cite[Theorem 1.1]{Ishii-Ueda_CCDM},
%\cite[Theorem 6.2]{Bocklandt_CCDM}}]
%A non-degenerate dimer model is consistent
%if and only if the path algebra of the associated quiver with relations
%is cancellative.
%\end{theorem}
%
Together with \cite[Theorem 3.3]{Gulotta},
this shows that the characteristic polygon and the zigzag polygon coincides
for consistent dimer models.
%
%\begin{theorem}[{\cite[Theorem 3.3]{Gulotta},
% cf. also \cite[Corollary 8.3]{Ishii-Ueda_DMSMCv1}}]
% \label{th:gulotta}
%For a consistent dimer model,
%the characteristic polygon $\Delta$ coincides with the zigzag polygon.
%\end{theorem}
%
A consistent dimer model is non-degenerate
by \cite[Proposition 8.1]{Ishii-Ueda_DMSMC}.

\subsection{Quivers and moduli spaces from dimer models}
 \label{sc:moduli}

%Here should mention 
%\begin{itemize}
%\item Quivers from dimer models and moduli space
%\item Perfect matchings and stability
%\end{itemize}
A \emph{quiver}  $Q = (Q_0, Q_1, s, t)$ consists of
\begin{itemize}
 \item
a set $Q_0$ of vertices,
 \item
a set $Q_1$ of arrows, and
 \item
a pair $s, t \colon Q_1 \to Q_0$ of maps
called the \emph{source} and the \emph{target} respectively.
\end{itemize}
A \emph{path} on a quiver
is either a symbol $e_v$
associated with a vertex $v \in Q_0$
or a sequence $(a_l, \ldots, a_1)$ of arrows
satisfying $s(a_{i+1}) = t(a_i)$ for $i=1,2,\ldots,l-1$.
The \emph{length} of a path is defined to be zero
for $e_v$ and $l$ for $(a_l,\ldots,a_1)$.
The \emph{path algebra} $\bC Q$ of a quiver 
$Q = (Q_0, Q_1, s, t)$
is the algebra
spanned by the set of paths
as a vector space,
and the multiplication is defined
by the concatenation of paths.
Paths of length zero are idempotents of the path algebra,
which sum up to one;
$
 \sum_{v \in Q_0} e_v = 1.
$
A {\em quiver with relations}
is a pair of a quiver
and a two-sided ideal $\cI$
of its path algebra.
For a quiver $\Gamma = (Q, \cI)$
with relations,
its path algebra $\bC \Gamma$ is defined as
the quotient algebra $\bC Q / \cI$.

A dimer model $G = (B, W, E)$ encodes
the information of a quiver with relations
$\Gamma = (Q_0, Q_1, s, t, \cI)$
such that
\begin{itemize}
 \item
$Q_0$ is the set of faces,
 \item
$Q_1$ is the set $E$ of edges,
 \item
the orientations of the arrows are determined
by the colors of the vertices of the graph
in such a way that the white vertex $w \in W$ is on the right
of the arrow, and
\item
the ideal $\cI$ of the path algebra $\bC Q$ is
generated by $p_+(a) - p_-(a)$
for all $a \in Q_1$,
where $p_+(a)$ is the path from $t(a)$ to $s(a)$
going around the white node
adjacent to $a \in E = Q_1$ clockwise, and
$p_-(a)$ is the path from $t(a)$ to $s(a)$
going around the black node
adjacent to $a \in E = Q_1$ counterclockwise.
\end{itemize}

A \emph{representation} of $\Gamma$ is a module
over the path algebra $\bC  \Gamma$.
It is given by a collection
$\Psi=((V_v)_{v \in Q_0}$, $(\psi(a))_{a \in Q_1})$
of vector spaces $V_v$ for $v \in Q_0$
and linear maps $\psi(a) \colon V_{s(a)} \to V_{t(a)}$
for $a \in Q_1$ satisfying relations in $\cI$.
The \emph{dimension vector}
$
 \dim \Psi
$
of a representation
$\Psi=((V_v)_{v \in Q_0}, (\psi(a))_{a \in Q_1})$
is the element
$
 \sum_{v \in Q_0} \lb \dim V_v \rb v
$
of the free $\bZ$-module $\bZ Q_0$
generated by $Q_0$.

Fix a dimension vector $\bfd \in \bZ Q_0$
and a \emph{stability parameter}
$\theta \in \Hom \lb \bZ Q_0, \bZ \rb$
satisfying $\theta(\bfd) = 0$.
A representation $\Psi$ of $\Gamma$
with dimension vector $\bfd$ is \emph{$\theta$-stable}
(resp. \emph{$\theta$-semistable})
if $\theta(\dim S) >0$
(resp. $\theta(\dim S) \ge 0$)
for any non-trivial subrepresentation
$S \subsetneq \Psi$.
The stability parameter $\theta$ is \emph{generic}
if semistability implies stability.

In this paper,
we will always work with the dimension vector
$
 \bsone \coloneqq \sum_{v \in Q_0} v
$
unless otherwise specified.
For a vertex $v_0 \in Q_0$,
a stability parameter $\theta$ is \emph{$v_0$-generated}
if $\theta(v)>0$ for any $v \ne v_0$.
Any $v_0$-generated parameter $\theta$ is always generic, and
a representation $\Psi$
with dimension vector $\bsone$
is $\theta$-stable
if and only if $\Psi$ is generated by a non-zero element in $V_{v_0}$
as a module over $\bC\Gamma$.

Let $\Delta$ be the characteristic polygon
of a dimer model $G$ and
$X_\Delta \coloneqq \Spec R$ be the Gorenstein affine toric 3-fold,
whose coordinate ring $R$
is the monoid ring $\bC[C(\Delta)^\dual\cap (M\oplus \bZ)]$
of the dual cone
of the cone $C(\Delta)$ over
$\Delta \times \{ 1 \} \subset N_\bR \times \bR$.
Here we fix an embedding $\Delta \subset N_\bR$ so that
it contains the origin of $N$.
The dense torus of $X_\Delta$ will be denoted by $\bT$.
If $G$ is consistent and $\theta$ is generic,
then the moduli space $\cM_\theta$ of $\theta$-stable representations
with dimension vector $\bsone$ is
a $\bT$-equivariant crepant resolution
of
%the affine toric 3-fold 
$X_\Delta$
by \cite[Theorem 6.4]{Ishii-Ueda_08}.
Toric divisors in $\cM_\theta$ correspond to perfect matchings on $G$ \cite[Section 6]{Ishii-Ueda_08},
and we write the perfect matching corresponding
to the toric divisor associated with the origin of $\Delta$
as $D_0$.
The corresponding one-parameter subgroup
of $\bT$
will be denoted by
\begin{equation}\label{eq:one-para}
\lambda_0 \colon \bCx \to \bT.
\end{equation}

The moduli space $\cM_\theta$ is equipped with  the tautological
bundle $\bigoplus_{v \in Q_0} \cL_v$ which,
by \cite[Theorem 1.4]{Ishii-Ueda_DMSMC}, is a tilting bundle on $\cM_\theta$ with
\[
\End\left(\bigoplus_{v \in Q_0} \cL_v\right) \cong \bC\Gamma.
\]
Notice that the tautological bundle is determined only
up to tensor product by a line bundle on $\cM_\theta$.

%%%%%%%%%%%%%%%%%%
\section{Group actions on dimer models}
 \label{sc:action}

A finite subgroup $H$ of $\GL(N)$ acts contragradiently on $M$,
and hence on $T \coloneqq M_\bR/M$.

\begin{definition} \label{df:action}
A dimer model $G$ on $T$ is \emph{symmetric}
with respect to the action of $H$
if for all $h \in H$ we have that:
\begin{itemize}
 \item
$h$ preserves the set $E$,
 \item
$h$ preserves the sets $B$ and $W$ individually
if $\det h = 1$, and
 \item
$h$ exchanges $B$ and $W$ if $\det h = -1$.
\end{itemize}
\end{definition}

\begin{remark}
Recall that the sets $B$, $W$, and $E$ are subsets of $T$
in our definition of a dimer model.
The action of $h$ on $G$ is required to preserve $E$ as a subset of $T$,
and similarly for $B$ and $W$.
In particular,
the symmetry of a dimer model in our sense
depends on how the graph is embedded in $T$.
\end{remark}

Examples of symmetric dimer models can be found
in Figures \ref{fg:Rotation4} and \ref{fg:Hexagon} below.
Here the origin of $T$ is the center of one octagonal face in Figure \ref{fg:Rotation4} 
and the center of the dodecagonal face in Figure \ref{fg:Hexagon}.

The conditions in Definition \ref{df:action} ensure that
if a dimer model $G$ is symmetric with respect to the action of $H$,
then $H$ acts on the quiver $\Gamma = (Q, \cI)$ with relations associated with $G$.
On the other hand, for perfect matchings $D_1$ and $D_2$, recall that the height change $\height(D_1, D_2)$ is defined as an element of $N$ independently of the choice of a basis of $N$ which is acted on by $H$. 
Then one can see $\height(h(D_1), h(D_2))=h\height(D_1, D_2)$ holds for any $h \in H$.
Thus $h$ sends the set $\{\height(D, D_0) \mid \text{$D$ is a perfect matching}\}$
to the set $\{\height(D, h(D_0)) \mid \text{$D$ is a perfect matching}\}$
and hence with respect to the linear action of $h \in H \subset \GL(N)$ on $N_\bR$, $h(\Delta)$
equals the translation of $\Delta$ by $\height(h(D_0), D_0)$.
Thus, with respect to this linear action, $\Delta$ is fixed by $H$ if $\height(h(D_0), D_0)=0$.

We will make the following assumptions
throughout this paper:

\begin{assumption} \label{as:polygon}
The characteristic polygon $\Delta$ is fixed by the action of $H$
for a suitable choice of a reference perfect matching.
\end{assumption}

\begin{assumption} \label{as:vertex}
There is a vertex $v_0$ of $Q$ fixed by the action of $H$.
\end{assumption}

In particular,
the symmetric dimer model
obtained in our proof of Theorem \ref{th:main}
satisfies Assumptions \ref{as:polygon} and \ref{as:vertex}.

Assumption \ref{as:polygon} means that
the height change
$\height(h(D_0), D_0) \in N$
of the image of the reference matching $D_0$
by any $h \in H$
is zero as noted above.
In this case, we obtain a torus-equivariant action
\[
\mu: H \times X_\Delta \to X_\Delta
\]
on the affine toric variety $ X_\Delta$
associated with the cone over $\Delta$.

\begin{remark}
By combining the translation by $\height(D_0, h(D_0))$ to the action of $h$ on $N$ for each $h \in H$,
we obtain an affine linear action of $H$ on $N$ which preserves $\Delta \subset N_{\bR}$.
This affine linear action on $N$ can be extended to a linear action on $N \times \bZ$
whose restriction to $N \times \{1\}$ coincides with the affine linear action.
Thus we can define the action of $H$ on $X_\Delta$ even if there is no perfect matching $D_0$ with $\height(h(D_0), D_0)=0$.
However, we do not consider such a situation in this paper.
\end{remark}
Since the pull-back by
$
 \mu(h,-) \colon X_\Delta \to X_\Delta
$
acts by multiplication by $\det(h)$
on the canonical module $\omega_{X_\Delta}$
of the Gorenstein affine toric variety $X_\Delta$
associated with $\Delta$,
the line bundle $\omega_{X_\Delta}$ is not $H$-equivariantly trivial with respect to the action $\mu$ of $H$
if $H$ is not contained in $\SL(N)$.
In that case, the fixed point locus of a reflection is a divisor
which is the closure of a codimension one subtorus of $\bT \subset X_\Delta$.
In order to make the action of $H$ small (i.e., free in codimension one)
and to obtain a Gorenstein singularity as the quotient,
we twist the action of $H$ on $X_\Delta$
by the one-parameter subgroup $\lambda_0$ of $\bT$ (see \eqref{eq:one-para}) as
\begin{align}\label{eq:twist_on_X}
 \nu(h,x) = \lambda_0(\det(h)) \cdot \mu(h,x),
\end{align}
so that the induced action on the canonical module is trivial.
Note that the action $\nu$ of $H$ depends
on the choice of the origin of $\Delta$,
although $X_\Delta$ as an abstract variety does not.

\begin{remark}\label{rem:choice}
The twisted action $\nu$ in \eqref{eq:twist_on_X} depends on the choice of the origin in $\Delta$
when $H\cong\bZ/2\bZ$ is a reflection group of order $2$.
If $\Delta$ is a lattice triangle, we recover the dihedral groups in $\SL(3,\bC)$ acting on $\bC^3$. A \emph{dihedral group} in $\SL(3,\bC)$ is obtained by the natural embedding of a dihedral group $G \subset \GL(2,\bC)$ into $\SL(3,\bC)$, which belongs  to the type B family in the Yau-Yu classification \cite{YY93}. 
Recall that for $1<q<m$ with $(m,q)=1$,
a \emph{dihedral group in $\GL(2,\bC)$} is defined as
\[
G=\mathbb{D}_{m,q}:= \left\{ \begin{array}{ll} 
\Span{\psi_{2q}, \tau, \phi_{2k}}, & \text{if }k:=m-q\equiv1\text{ mod 2} \\ 
\Span{\psi_{2q}, \tau\circ\phi_{4k}}, & \text{if }k:=m-q\equiv0\text{ mod 2} 
\end{array} \right.
\]
with matrices $\psi_{r}=\begin{pmatrix}\varepsilon_{r}&0\\0&\varepsilon_{r}^{-1}\end{pmatrix}, \tau=\begin{pmatrix}0&\varepsilon_4\\\varepsilon_4&0\end{pmatrix}, \phi_{r}=\begin{pmatrix}\varepsilon_{r}&0\\0&\varepsilon_{r}\end{pmatrix}$, where $\varepsilon_{r}$ is a primitive $r$-th root of unity. Every such group can be described as a finite group with an index 2 abelian subgroup $A$ (see \cite[Remark 3.3]{NdC12}). For example, if $A$ is cyclic, then $G=\Span{\alpha,\beta}$ where $\beta^2\in \Span{\alpha}=A$. In what follows we assume for simplicity that $A$ is cyclic, although the arguments also work for a general abelian group.

A triangle $\Delta$ which admits a reflection can be embedded as the junior simplex (i.e.\ the triangle with vertices the standard basis $e_1$, $e_2$, $e_3$) of the cyclic group $A=\frac{1}{n}(1,a,-(a+1))$ with $a^2\equiv1$ mod $n$, where $N$ is identified with
$$
\left(\bZ^3 + \bZ \frac{1}{n}(1,a,-(a+1))\right) \cap \{(x, y, z) \mid x+y+z=1\},
$$
up to the choice of the origin, and the reflection happens along the plane $x=y$.
Then the action of $H$ on $\Delta$ can be lifted to the action on $X_\Delta$ by the matrix $\mu=\left(\begin{smallmatrix}0&1&0\\1&0&0\\0&0&1\end{smallmatrix}\right)$, which is not trivial on the canonical module $\omega_{X_\Delta}$.
The points in $\Delta\cap N$ fixed by $H$ are of the form $P_j=(\frac{jq}{n},\frac{jq}{n},1-\frac{2jq}{n})$ where $q=\frac{n}{(a-1,n)}$ for $0\leq j\leq\floor{\frac{n}{2q}}$, with corresponding one-parameter subgroups $\lambda_{P_j}:\mathbb{C}^*\to\mathbb{T}$ of the form $\lambda_{P_j}(t)=(t^{jq/n},t^{jq/n},t^{1-2jq/n})$. 
Here note that one-parameter subgroups of $\bT$ are identified
with elements of $N\subset \bQ^3$ since $\bT = (\bCx)^3/A$ 
and the one-parameter subgroup corresponding to $(a_1, a_2, a_3) \in N$ is denoted by
$t \mapsto (t^{a_1}, t^{a_2}, t^{a_3})$ even though $a_i$ are rational numbers.
In particular $\lambda_{P_j}(-1)=(\varepsilon^{jq/2},\varepsilon^{jq/2},-\varepsilon^{-jq})$ in $\bT$, and taking $P_j$ as the origin of $\Delta$ we have that $X_\Delta/H\cong\bC^3/G_j$ where $G_j=\Span{\frac{1}{n}(1,a,-(a+1)),\lambda_{P_j}(-1)\cdot\mu}$ is a dihedral group. It can be shown that $G_0\cong G_{2i}$ and $G_1\cong G_{2i+1}$, which implies that there are at most two non-isomorphic dihedral actions on $\bC^3$ associated to $\Delta$ given by
\begin{align*}
G_0 &= \Span{\frac{1}{n}(1,a,-(a+1)), \left(\begin{smallmatrix}0&1&0\\1&0&0\\0&0&-1\end{smallmatrix}\right)% ~|~\text{$a^2\equiv1$ mod $n$}
},\\ 
G_1 &= \Span{\frac{1}{n}(1,a,-(a+1)), \left(\begin{smallmatrix}0&\varepsilon^{q/2}&0\\\varepsilon^{q/2}&0&0\\0&0&-\varepsilon^{-q}\end{smallmatrix}\right) %~|~ \text{$\varepsilon=e^{2\pi i/n}$, $a^2\equiv1$ mod $n$}
},
\end{align*}
where $a^2\equiv1$ mod $n$ and $\varepsilon=e^{2\pi\sqrt{-1}/n}$.

In general, $G_0$ and $G_1$ may be isomorphic and every dihedral subgroup $G\subset\SL(3,\bC)$ can be written in this form. We note that in the case when $a=n-1$ then $G_0\cong\dD_n\subset\SO(3)$, and if $n\geq4$ is even then $G_0\cong\BD_{n}$, where $\dD_n=\Span{\alpha,\beta~|~\alpha^{n}=\beta^2=1,\alpha\beta=\beta\alpha^{-1}}$ and $\BD_{n}=\Span{\alpha,\beta~|~\alpha^{n}=1, \beta^2=\alpha^{n/2},\alpha\beta=\beta\alpha^{-1}}$ are the dihedral and the binary dihedral groups respectively, both in the ``classical" sense.
\end{remark}

\begin{example} The triangle $\Delta$ formed as the junior simplex for the subgroup $\frac{1}{12}(1,7,4)$ admits the above two non-isomorphic dihedral actions, where $G_1\cong D_{5,2}$ in the Yau-Yu notation (see \cite[B1, p.12]{YY93}). The group $G_0$ is not included in the Yau-Yu classification since the (isomorphic) group $\widetilde{G}=\Span{\frac{1}{12}(1,7),\left(\begin{smallmatrix}0&1\\1&0\end{smallmatrix}\right)}\subset\GL(2,\bC)$ is not small.  
\end{example}

Correspondingly, we have to twist
the action of $H$ on the path algebra $\bC \Gamma$.
\begin{lemma}
Under Assumptions \ref{as:polygon} and \ref{as:vertex},
there exists a perfect matching $D_0$ which is fixed by the action of $H$.
\end{lemma}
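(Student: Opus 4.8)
The plan is to realise the required matching as the unique minimal element of the finite distributive lattice formed by the perfect matchings lying over a single $H$-fixed lattice point of $\Delta$.

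First I would record that $H$ acts on the set $\mathcal{P}$ of all perfect matchings: by \pref{df:action} every $h\in H$ permutes $E$ and either preserves or interchanges $B$ and $W$, so $h(D)$ is again a perfect matching for each $D$. Since $\height(h(D),h(D'))=h_\ast\,\height(D,D')$, this action is compatible with the height map $D\mapsto\height(D,D')\in N$ up to the translation controlled by \pref{as:polygon}. Because $\Delta$ is genuinely fixed (not merely fixed up to translation), choosing the reference matching $D_{\mathrm{ref}}$ over an $H$-fixed lattice point $p_0\in\Delta\cap N$ — the origin — makes $D\mapsto\height(D,D_{\mathrm{ref}})$ strictly $H$-equivariant, so the fibre $\mathcal{P}_0:=\{D : \height(D,D_{\mathrm{ref}})=p_0\}$ is $H$-stable. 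For a rotation group one may take $p_0=0$; for a reflection group the existence of such a fixed lattice point inside $\Delta$ is exactly the choice of origin discussed in \pref{rem:choice}, where these points are listed explicitly, and \pref{as:vertex} supplies a compatible basepoint. Non-degeneracy makes $\mathcal{P}_0$ non-empty.

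Next I would equip $\mathcal{P}_0$ with its standard distributive-lattice structure. For $D,D'\in\mathcal{P}_0$ the symmetric difference $D\,\triangle\,D'$ is a disjoint union of cycles, and equality of height changes forces each cycle to be null-homologous on $T$; lifting to the universal cover $M_\bR$ then yields a well-defined integer height function $\eta_D$ on faces, with $\eta_{D'}-\eta_D$ the signed sum of the regions bounded by those cycles. Declaring $D\le D'$ when $\eta_D\le\eta_{D'}$ pointwise makes $\mathcal{P}_0$ a finite distributive lattice, and this order is independent both of the reference matching and of the additive normalisation, since changing either shifts every $\eta_D$ by one and the same function; the fixed face $v_0$ of \pref{as:vertex} provides the canonical normalisation $\eta_D(v_0)=0$.

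The heart of the matter is that $H$ acts on this lattice by order-\emph{preserving} automorphisms. For $h$ with $\det h=1$ this is immediate, as $h$ is an orientation- and colour-preserving homeomorphism of $T$ and $\eta_{h(D)}=\eta_D\circ h^{-1}$ up to a constant. For a reflection $h$ with $\det h=-1$ two sign reversals occur: interchanging the colours $B\leftrightarrow W$ negates the increment of the height function across each dual edge, while reversing the orientation of $T$ negates the sign with which that dual edge is crossed; the two reversals cancel, so again $\eta_{h(D)}=\eta_D\circ h^{-1}$ up to a constant and the order is preserved rather than reversed. Establishing this cancellation cleanly is the main obstacle, and I would reduce it to the covering relations of the lattice — where $D'$ is obtained from $D$ by rotating the matching along a single face — and verify on one face that an orientation- and colour-reversing symmetry carries an elementary raising move to another raising move, a purely local computation with the height conventions. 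Granting this, $H$ permutes $\mathcal{P}_0$ by lattice automorphisms, so its unique minimal element $D_0$ is fixed by every $h\in H$ (equivalently, $\bigvee_{h\in H}h(D)$ is $H$-invariant for any $D$), and this $D_0$ is the desired $H$-fixed perfect matching.
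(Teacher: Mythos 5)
Your argument is correct in its essentials, but it takes a genuinely different route from the paper. The paper's proof is a two-line appeal to moduli theory: since $v_0$ is $H$-fixed one can choose a $v_0$-generated stability parameter $\theta$ that is $H$-invariant; each lattice point of $\Delta$ then carries a \emph{unique} $\theta$-stable perfect matching, and uniqueness applied to the $H$-fixed origin forces that matching to be $H$-invariant. You instead work purely combinatorially: you restrict to the matchings whose height change is the origin, order them by their height functions normalised at the $H$-fixed face $v_0$, check that every $h\in H$ acts by an order-\emph{preserving} automorphism (the crux being that for $\det h=-1$ the orientation reversal of $T$ and the colour swap each contribute a sign to the height increments and these cancel --- this does check out), and take the unique minimal element. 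Your approach buys independence from the stable-perfect-matching machinery of \cite{Ishii-Ueda_08} at the cost of having to set up the lattice structure on the torus and carry out the sign computation, which you only sketch. Three small inaccuracies, none fatal: the cycles of $D\,\triangle\,D'$ need not each be null-homologous when the height changes agree (only their total class vanishes), but the difference $\eta_{D'}-\eta_D$ is still a well-defined function on $T$, which is all you use; your claim that the order is independent of the additive normalisation is false (shifting the base face shifts $\eta_D$ by a $D$-dependent constant), but this is harmless because you immediately anchor at the $H$-fixed face $v_0$, and indeed \pref{as:vertex} is exactly what makes the order $H$-invariant; and ``non-degeneracy'' is not needed for non-emptiness of the fibre over the origin, which contains the reference matching by definition.
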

\begin{proof}
Notice that the action of $H$ on $Q_0$ induces an action of $H$ on the set of stability parameters.
Then there exists a $v_0$-generated stability parameter $\theta$ fixed by this action.
Let $D_0$ be the $\theta$-stable perfect matching corresponding to the origin.
Then it is easy to see that $D_0$ is fixed by the $H$-action.
\end{proof}

Using the invariant perfect matching $D_0$, we twist the natural action of $H$ on $\bC Q$  as
\begin{align}\label{eq:twist_on_Gamma}
 a \mapsto
\begin{cases}
 \det(h) h(a) & a \in D_0, \\
 h(a) & \text{otherwise}.
\end{cases}
\end{align}
Notice that this twist preserves the relation and thus gives an action of $H$ on $\bC\Gamma=\bC Q/\scI$.

\begin{definition}
Let $G$ be a finite group
acting on a ring $A$ from the left
by a homomorphism $\varphi \colon G \to \Aut A$.
The \emph{crossed product algebra}
$
A \rtimes_\varphi G
$
is the vector space
$
A \times G
\cong
A \otimes_\bC \bC[G]
$
equipped with the product
$
(a, g) \cdot (a', g')
\coloneqq
(a \varphi(g)(a'), g g').
$
Similarly,
for a finite group $G$
acting on a ring $A$ from the right
by a homomorphism $\psi \colon G^{\mathrm{op}} \to \Aut A$,
the \emph{crossed product algebra}
$
G \ltimes_\psi A
$
is the vector space
$
G \times A
$
equipped with the product
$
(g, a) \cdot (g', a')
\coloneqq
(g g', \psi(g')(a) a').
$
We drop $\varphi$ and $\psi$ from the notation
when they are clear from the context.
\end{definition}

\begin{remark}
The pre-composition of the anti-automorphism
$G \to G^\mathrm{op}$
sending $g$ to $g^{-1}$
gives a bijection
$\Hom(G, \Aut A) \to \Hom(G^\mathrm{op}, \Aut A)$.
If $\varphi$ and $\psi$ are related by this bijection,
then one has an isomorphism $A \rtimes_\varphi G \to G \ltimes_\psi A$
sending $(a,g)$ to $(g, \psi(g)(a))$.
\end{remark}

In order to give a quiver with relations
which is Morita equivalent to the crossed product algebra
$H \ltimes \bC \Gamma$,
choose a complete representative
$Q_0' \subset Q_0$
of $Q_0/H$.
The $H$-orbit and the stabilizer subgroup
of $v \in Q_0'$
will be denoted by
$
 O_v \coloneqq H \cdot v \subset Q_0
$
and
$
 H_v \subset H
$
respectively.
Since $H$ is a principal $H_v$-bundle over $O_v$,
the category of $H$-equivariant vector bundles on $O_v$ is equivalent
to the category of $H_v$-equivariant vector bundles on $v$.
In other words,
the crossed product algebra
$
 H \ltimes \bC[O_v]
$
of $H$
with the algebra
\begin{align}
 \bC[O_v]
  \coloneqq \bigoplus_{w \in O_v} \bC \, e_w
  \subset \bC \Gamma
\end{align}
of functions on $O_v$
is Morita equivalent to the group algebra
$\bC[H_v]$ of $H_v$.
A classical result in representation theory of finite groups gives
a ring isomorphism
\begin{align}
 \bC[H_v] \cong \bigoplus_{\rho \in \Irrep(H_v)} \End_\bC(\rho).
\end{align}
%It follows that $H \ltimes \bC[O_v]$ is isomorphic
%to the direct sum of matrix algebras;
%\begin{align}
% H \ltimes \bC[O_v] \cong
%  \bigoplus_{\rho \in \Irrep(H_v)} \End_\bC \lb \bC^{n_\rho} \rb.
%\end{align}
Choose a primitive idempotent $e_\rho$
in the matrix algebra $\End_\bC \lb \rho \rb$
for each $\rho \in \Irrep(H_v)$
and set
\begin{align}
 e = \sum_{v \in Q_0'} \sum_{\rho \in \Irrep(H_v)} e_\rho.
\end{align}
Then $e \lb H\ltimes\bC \Gamma \rb e$ is Morita equivalent to $H\ltimes\bC \Gamma$,
and $\{ e_\rho \}_\rho$ gives a set of mutually orthogonal idempotents
in $e \lb H\ltimes\bC \Gamma \rb e$
which sum up to the identity.
This allows one to describe $e \lb H\ltimes\bC \Gamma \rb e$
in terms of a quiver with relations;
the set $V$ of vertices is $\bigcup_{v \in Q_0'} \Irrep(H_v)$,
and for each (not necessarily distinct) pair $(\rho, \rho')$ of vertices,
we choose a finite subset of $e_{\rho'} (H \ltimes \bC \Gamma) e_\rho$
as the set of arrows from $\rho$ to $\rho'$,
in such a way that the union for all pairs
generate $e \lb H\ltimes\bC \Gamma \rb e$
as an algebra.

To illustrate the constructions so far,
we discuss two-dimensional examples,
which are simpler than, but shares the essential features
of, three-dimensional cases.

\begin{example}
%
%While a dimer model is a bicolored graph on a 2-torus
%giving a polygon division of the torus,
A two-dimensional analog of a dimer model
is a collection of uncolored nodes on a circle,
which divides the circle into intervals.
The division of the circle into $n$ intervals corresponds to
the McKay quiver $\Gamma = (Q_0,Q_1,s,t,\cI)$
for the subgroup $A$ of $\SL(2,\bC)$
generated by $\gamma \coloneqq \diag \lb \zeta_n, \zeta_n^{-1} \rb$,
where $\zeta_n \coloneqq \exp \lb 2 \pi \sqrt{-1}/n \rb$.
The set $Q_0$ of vertices consists
of irreducible representations
$\rho_i \colon \gamma \mapsto \zeta_n^i$
of $A$
for $i = 0, 1, \ldots, n-1$.
The set of arrows consists of $x_i$ and $y_i$
for $i = 0, 1, \ldots, n-1$
with sources $s(x_i) = \rho_i$, $s(y_i) = \rho_i$
and targets $t(x_i) = \rho_{i+1}$, $s(y_i) = \rho_{i-1}$,
and the ideal of relations are generated by
$
 x_{i-1} y_i - y_{i+1} x_i
$
for $i = 0, 1, \ldots, n-1$.
The path algebra
$
\bC \Gamma
$
can be identified with the crossed product algebra
$
A \ltimes \bC[x,y]
$
in such a way that
\begin{itemize}
\item
the idempotent of the path algebra $\bC \Gamma$
corresponding to the vertex $\rho_i \in Q_0$
is identified with
the idempotent
$
e_i
= \frac{1}{n} \sum_{j=0}^{n-1} \zeta_n^{-ij} \gamma^j
$
of the group ring
$
\bC[A]
\subset A \ltimes \bC [x,y]
$
corresponding to the projection to $\rho_i$,
and
\item
$x_i = e_{i+1} x e_i$ and $y_i = e_{i-1} y e_{i}$,
so that
$
x = \sum_{i=0}^{n-1} x_i
$
and
$
y = \sum_{i=0}^{n-1} y_i.
$
\end{itemize}
The analog of the characteristic polygon in this case
is the interval $\Delta$ of length $n$ in $N_\bR \coloneqq N \otimes \bR$
where $N$ is a free abelian group of rank 1,
and the associated toric variety $X_\Delta$
gives the $A_n$-singularity $\bC^2/A$.
The cyclic group $H$ of order two
is the only non-trivial finite subgroup of $\GL(N)$.
The induced action $\mu$ of $H$ on $X_\Delta$
does not preserve the canonical module,
and one can twist the action
to obtain a Gorenstein quotient singularity
only if $n$ is even.
This condition on the parity of $n$
is ensured by Assumption \ref{as:polygon}.
The quotient of $X_\Delta$ by the twisted action of $H$
is the quotient of $\bC^2$ by the binary dihedral group $\BD_{n}$ of order $2n$.
%which is the semidirect product $H \ltimes A$.
For even $n$,
there are two ways to make $H$ act on the circle $M_\bR / M$.
One fixes a pair of intervals and acts non-trivially on the remaining $n-2$ intervals,
and the other acts non-trivially on all the intervals.
Only the former satisfies Assumption \ref{as:vertex}.
%and gives the McKay quiver for the dihedral group.
%
 Let us consider the case $n=4$, i.e.\ the group $\BD_{4}$ of order 8.
The action of the generator $\sigma$ of $H \cong \bZ/2\bZ$
%$
% H \coloneqq \la 1, \sigma \ra \cong \bZ/2\bZ
%$
on the McKay quiver $\Gamma$
fixes the vertices $\rho_0$ and $\rho_2$, and
interchanges the vertices $\rho_1$ and $\rho_3$.
The action on the arrows depends on a choice
of a perfect matching.
Choosing a perfect matching corresponds to
choosing one arrow from each of the pairs
$\{x_0, y_1\}, \{x_1, y_2\}, \{x_2, y_3\}, \{x_3, y_0\}$.
The choice $y_1, y_2, x_2, x_3$ corresponds to the $0$-generated
$H$-invariant perfect matching, with respect to which
the action of $\sigma$ on the arrows is given by
\begin{equation}\label{eq:sigma}
\begin{aligned}
 x_0 &\leftrightarrow y_0 \\
 x_1 &\leftrightarrow y_3 \\
 x_2 &\leftrightarrow -y_2 \\
 x_3 &\leftrightarrow -y_1.
\end{aligned}
\end{equation}
The path algebra $\bC \Gamma$ with relations is isomorphic
to the crossed product algebra
$
 A \ltimes \bC[x,y],
$
and
%the crossed product algebra
$
 B \coloneqq H \ltimes \bC \Gamma
$
is isomorphic to
$
 \BD_4 \ltimes \bC[x,y],
$
where
$$
\BD_4=\Span{\frac{1}{4}(1,3), \left(\begin{smallmatrix}0&\sqrt{-1}\\\sqrt{-1}&0\end{smallmatrix}\right)}
$$
is the binary dihedral group of type $D_4$
and the matrix $\left(\begin{smallmatrix}0&\sqrt{-1}\\\sqrt{-1}&0\end{smallmatrix}\right)\in \BD_4$
corresponds to $\delta:=\sigma((e_0-e_2) +\sqrt{-1}(e_1+e_3)) \in B$.
In fact,  $\delta^2$ is identified with $\gamma^2\in A$ and Equation \eqref{eq:sigma} implies $\delta x_i \delta^{-1}=\sqrt{-1} y_{-i}$ and $\delta y_i \delta^{-1}=\sqrt{-1} x_{-i}$ in $B$.
The algebra $B$ has primitive idempotents
\begin{align}
 e_{00} &= \frac{1}{2}(1+\sigma) e_0, \\
 e_{01} &= \frac{1}{2}(1-\sigma) e_0, \\
 e_{130} &= \frac{1}{2}(1+\sigma) (e_1+e_3), \\
 e_{131} &= \frac{1}{2}(1-\sigma) (e_1+e_3), \\
 e_{20} &= \frac{1}{2}(1+\sigma) e_2, \\
 e_{21} &= \frac{1}{2}(1-\sigma) e_2
\end{align}
which are mutually orthogonal 
and sum up to the identity.
The projective modules 
$
 P_{130} = e_{130} B
$
and
$
 P_{131} = e_{131} B
$
are isomorphic as $B$-modules
by the map
\begin{align}
 m \mapsto (e_1-e_3) \cdot m.
\end{align}
Indeed,
this map interchanges $P_{130}$ and $P_{131}$
since
\begin{align}
 (e_1-e_3) (1 \pm \sigma) = (1 \mp \sigma) (e_1-e_3),
\end{align}
and it is an isomorphism since
\begin{align}
  (e_1-e_3)^2=(e_1+e_3).
\end{align}
Therefore,
one can choose
\begin{align}
 e = e_{00} + e_{01} + e_{130} + e_{20} + e_{21}.
\end{align}
%One can easily see that
%the primitive part of $e_{130} B e_{00}$ is represented by
One can take
\begin{align}
 (1+\sigma)(e_1+e_3)x(1+\sigma)e_0
  &=(1+\sigma)(x_0 + \sigma y_0)\\
  &= (1+\sigma)(\sigma x_0 + y_0) \\
  &= (1+\sigma)(e_1+e_3)y(1+\sigma)e_0,
\end{align}
as the element corresponding to a unique arrow
from $e_{00}$ to $e_{130}$.
Arrows between other vertices
can be computed similarly, which generates $B$ as an algebra.
Moreover, one can deduce the relation for the McKay quiver
for the binary dihedral group $\BD_4$
from the relations for the McKay quiver for the cyclic group $A$.

\end{example}

\section{Finite subgroups of $\GL(2,\bZ)$}
 \label{sc:subgroups}

Finite subgroups of $\GL(2,\bZ)$ are classified as follows:

\begin{proposition} \label{pr:H}
A finite non-trivial subgroup of $\GL(2,\bZ)$ is conjugate
to one of the following:
\begin{enumerate}
\item
Cyclic group of rotations:
\begin{itemize}
\item $C_2 = \left\langle \begin{pmatrix} -1 & 0 \\ 0 & -1 \end{pmatrix}\right\rangle$ of order $2$.
\item $C_3=\left\langle \begin{pmatrix} 0 & -1 \\ 1 & -1 \end{pmatrix} \right\rangle$ of order $3$.
\item $C_4=\left\langle \begin{pmatrix} 0 & -1 \\ 1 & 0 \end{pmatrix}\right\rangle$ of order $4$.
\item $C_6=\left\langle \begin{pmatrix} 1 & -1 \\ 1 & 0 \end{pmatrix}\right\rangle$ of order $6$.
\end{itemize}
\item
Reflection groups of order $2$:
\begin{itemize}
\item $R_1 = \left\langle \begin{pmatrix} 1 & 0 \\ 0 & -1 \end{pmatrix}\right\rangle$.
\item $R_2 = \left\langle \begin{pmatrix} 0 & 1 \\ 1 & 0 \end{pmatrix}\right\rangle$.
\end{itemize}
\item
Dihedral groups:
\begin{itemize}
\item $D_4^1 = \left\langle \begin{pmatrix} -1 & 0 \\ 0 & -1 \end{pmatrix}, \begin{pmatrix} 1 & 0 \\ 0 & -1 \end{pmatrix}\right\rangle$ of order $4$.
\item $D_4^2 = \left\langle \begin{pmatrix} -1 & 0 \\ 0 & -1 \end{pmatrix}, \begin{pmatrix} 0 & 1 \\ 1 & 0 \end{pmatrix}\right\rangle$ of order $4$.
\item $D_6^1=\left\langle \begin{pmatrix} 0 & -1 \\ 1 & -1 \end{pmatrix}, \begin{pmatrix} 1 & 0 \\ 1 & -1 \end{pmatrix}\right\rangle$ of order $6$.
\item $D_6^2=\left\langle \begin{pmatrix} 0 & -1 \\ 1 & -1 \end{pmatrix}, \begin{pmatrix} 0 & 1 \\ 1 & 0 \end{pmatrix}\right\rangle$ of order $6$.
\item $D_8=\left\langle \begin{pmatrix} 0 & -1 \\ 1 & 0 \end{pmatrix}, \begin{pmatrix} 1 & 0 \\ 0 & -1 \end{pmatrix}\right\rangle$ of order $8$.
\item $D_{12}=\left\langle \begin{pmatrix} 1 & -1 \\ 1 & 0 \end{pmatrix}, \begin{pmatrix} 0 & 1 \\ 1 & 0 \end{pmatrix}\right\rangle$ of order $12$.
\end{itemize}
\end{enumerate}
\end{proposition}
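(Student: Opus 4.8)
The plan is to reduce the problem to the classification of finite subgroups of $O(2)$ by an averaging argument, pin down the allowed orders by the crystallographic restriction, and then carry out the finer classification up to conjugacy in $\GL(2,\bZ)$ (which is strictly finer than conjugacy in $O(2)$) case by case.

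First I would average the standard inner product over $H$ to obtain an $H$-invariant positive definite quadratic form. This exhibits $H$ as a subgroup of the corresponding orthogonal group, so over $\bR$ it is conjugate into $O(2)$ and is therefore either a cyclic group of rotations or a dihedral group generated by rotations together with a reflection. Next, the crystallographic restriction bounds the orders: any $g \in H$ has $\operatorname{tr}(g) \in \bZ$, and since the eigenvalues of a finite-order element lie on the unit circle, a rotation has trace $2\cos\theta \in \{-2,-1,0,1,2\}$, forcing $\theta$ to correspond to order $1,2,3,4$ or $6$. An element with $\det = -1$ must have eigenvalues $1$ and $-1$ (a conjugate pair on the unit circle has product $+1$), hence is automatically an involution. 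Writing $H_0 := H \cap \SL(2,\bZ)$ for the rotation subgroup, $H_0$ is cyclic of order $n \in \{1,2,3,4,6\}$; if $\det|_H$ is trivial then $H = H_0$ is one of the cyclic groups, and otherwise $H = \langle r, s\rangle$ is a dihedral extension with $r$ of order $n$ and $s$ a reflection satisfying $s r s^{-1} = r^{-1}$.

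For the rigidity of the rotation subgroup up to integral conjugacy, I would invoke that $\bZ[\zeta_n]$ is a principal ideal domain for $n \in \{3,4,6\}$ (the Eisenstein and Gaussian integers). The $C_n$-action turns $\bZ^2$ into a torsion-free rank-one module over $\bZ[\zeta_n]$, which is then free; hence there is a single $\GL(2,\bZ)$-conjugacy class for each such $n$, represented by the companion matrix of the $n$-th cyclotomic polynomial, which is exactly the listed generator of $C_n$. The cases $n = 1, 2$ give $\{I\}$ and $\langle -I\rangle = C_2$.

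The main work, and the chief obstacle, is the classification of the reflection and dihedral cases, precisely because real conjugacy no longer suffices. I would separate the conjugacy classes by a lattice invariant: for a $\det = -1$ involution $s$, the eigenlattices $(\bZ^2)^{s=+1}$ and $(\bZ^2)^{s=-1}$ span a sublattice of index $1$ or $2$ in $\bZ^2$, and this already distinguishes $R_1$ (index $1$) from $R_2$ (index $2$). More conceptually, within the lattice preserved by $C_n$ the reflection axes fall into orbits under the rotation subgroup, and the number of these orbits equals the number of conjugacy classes of dihedral extensions. This yields two classes for $n = 1, 2, 3$ (namely $R_1/R_2$, $D_4^1/D_4^2$, and $D_6^1/D_6^2$) and a single class for $n = 4, 6$ (namely $D_8$ and $D_{12}$), since a quarter-turn in the square lattice and a sixth-turn in the hexagonal lattice interchange the two families of reflection axes. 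The delicate points I expect to spend the most effort on are verifying that these lattice invariants are complete---so that no additional conjugacy classes appear---and exhibiting the explicit integral change of basis that brings each group to its tabulated representative.
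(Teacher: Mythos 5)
Your proposal is correct and arrives at the same list, but it substitutes more algebraic machinery for the paper's elementary lattice geometry at the two key points, so a comparison is worthwhile. For the cyclic groups of order $3,4,6$ the paper fixes an $H$-invariant metric, takes a shortest nonzero vector $v$, and checks directly that $v$ and $Av$ form a $\bZ$-basis (no lattice point lies in the triangle with vertices $0$, $v$, $Av$); your route, viewing $\bZ^2$ as a torsion-free rank-one module over the principal ideal domain $\bZ[\zeta_n]$, gets uniqueness of the conjugacy class in one stroke at the cost of invoking principality of the Eisenstein and Gaussian integers (note only that the listed generator of $C_6$ is conjugate to, not literally equal to, the companion matrix of the sixth cyclotomic polynomial). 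For the reflection groups your index-$1$-versus-index-$2$ invariant is exactly the paper's computation: the index divides $2$ because for $u=\alpha v+\beta w$ with $\alpha,\beta\in(0,1)$ one has $u+Au=2\alpha v\in\bZ^2$ and $u-Au=2\beta w\in\bZ^2$, forcing $\alpha=\beta=1/2$ by primitivity of $v$ and $w$; this is the one verification your sketch should not omit. For the dihedral groups the paper argues geometrically (the group preserves the hexagon or square of shortest vectors, and the two order-$6$ classes are separated by whether the triangle $v,Av,A^2v$ is preserved), whereas you count orbits of reflection axes; your count is right, but the precise statement is that a dihedral extension of a fixed $C_n$ is determined by a coset $sC_n$ of reflections, and conjugacy classes of extensions are orbits of such cosets under the full normalizer $N_{\GL(2,\bZ)}(C_n)$, not merely under $C_n$ itself --- for $n=4,6$ all integral reflections lie in a single coset, which is why only $D_8$ and $D_{12}$ occur, while for $n=1,2,3$ the two cosets are separated by your eigenlattice invariant (equivalently, by the paper's preserved-triangle criterion). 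These are precisely the ``delicate points'' you flag; they are where the content lies, but none of them is an obstruction to completing your argument.
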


\begin{proof}
Let $A \in \GL(2,\bZ)$ be an element of finite order $m$.
Since the characteristic polynomial of $A$ is of degree $2$ and is divisible by the $m$-th cyclotomic polynomial,
we see that $m$ is either $1$, $2$, $3$, $4$ or $6$.

A finite subgroup $H$ of $\GL(2,\bZ)$
is either cyclic or dihedral,
since $H$ is conjugate to a subgroup of $O(2)$.

If $H$ is cyclic of order greater than $2$,
then $H$ is a rotation group.
Consider an $H$-invariant metric on $\bR^2$ and
take a vector $v \in \bZ^2 \setminus 0$ with the smallest length.
Then there are no other lattice points in the triangle formed by $0$, $v$ and $Av$,
so that $v$ and $Av$ form a $\bZ$-basis of $\bZ^2$,
and $H$ is conjugate to $C_3$, $C_4$ or $C_6$ above.
If $H$ is a rotation group of order $2$, then $H=\langle -1 \rangle$.

If $H=\langle A \rangle$ is a reflection group of order $2$,
then take two primitive vectors
$v, w \in \bZ^2$ with $Av=v$ and $Aw=-w$.
If $v, w$ form a $\bZ$-basis of $\bZ^2$, then $H$ is conjugate to $R_1$.
Otherwise, there is an integral vector $u=\alpha v + \beta w \in \bZ^2$ with $\alpha, \beta \in (0,1)$.
Then the equations $u+Au=2\alpha v \in \bZ^2$ and $u-Au=2\beta w \in \bZ^2$ imply $\alpha=\beta=1/2$.
Thus $H$ is conjugate to $R_2$.

If $H$ is dihedral of order $4$,
then $H$ is generated by a reflection and $-1$,
so that it is conjugate to either $D_4^1$ or $D_4^2$.
In the remaining cases, consider an $H$-invariant metric and take a vector $v \in \bZ^2 \setminus 0$ of the smallest length.
If $H$ is dihedral of order $6$ and $A \in H$ is a rotation of order $3$,
then $v$ and $Av$ form a $\bZ$-basis of $\bZ^2$ and $\pm v$, $\pm Av$, $\pm A^2v$ are the
non-zero integral vectors of the smallest length.
Therefore, $H$ preserves the hexagon whose vertices are these six vectors.
It follows that $H$ is conjugate to $D_6^2$
if $H$ preserves the triangle formed by $v$, $Av$ and $A^2v$,
and conjugate to $D_6^1$ otherwise.
If $H$ is dihedral of order $8$ and $A \in H$ is a rotation of order $4$,
then $\pm v, \pm Av$ are the non-zero vectors of the smallest length
and $H$ preserves the square formed by these $4$ vectors.
Therefore $H$ is conjugate to $D_8$.
Similarly, a dihedral group of order $12$ is conjugate to $D_{12}$.
\end{proof}

\section{Construction of symmetric dimer models}
 \label{sc:main}

Let $H$ be a finite subgroup of $\GL(N)$ and
$\Delta$ be an $H$-invariant lattice polygon in $M_\bR$.
A \emph{corner} of $\Delta$ is a point on the boundary of $\Delta$
such that $\Delta$ is not defined by one linear inequality
in any neighborhood of that point.
Our strategy for constructing a symmetric dimer model is the following:

\begin{enumerate}[(1)]
 \item
Embed $\Delta$ into an $H$-invariant polygon $\Deltatilde$,
which is the characteristic polygon of a consistent symmetric dimer model $\Gtilde$. 
To find such a dimer model $\Gtilde$,
we enlarge a small example by a linear transform by Lemma \ref{lm:UY}, and
cut off its corners by using Proposition \ref{pr:Gulotta} if necessary.
 \item
If there exists a corner $\frakc$ of $\widetilde{\Delta}$ not in $\Delta$,
then remove the orbit $H\cdot \frakc$ and take the convex hull of the rest.
When we consider only one corner,
this corresponds to removing edges in the dimer model $\Gtilde$
using the special McKay correspondence
as in \cite{Ishii-Ueda_DMSMC}. Proposition 
\ref{pr:chop0} allows us to do the operations symmetrically,
%with respect to the action of $H$,
under some conditions on $\widetilde{\Delta}$.
 \item
Repeat the second step until we obtain $\Delta$.
\end{enumerate}

The dimer model $\Gtilde$ in the first step must be constructed
so that the lattice polygon $\Deltatilde$ satisfies the conditions
in Proposition \ref{pr:chop0} in each step of corner removal.
%We construct such $\Gtilde$ in each case
%depending on $H$ classified in \pref{pr:H}.
%At the moment we can prove a statement as \pref{pr:reflection} when $G$ is generated by a rotation of order 2, 3 and 4. Rotation of order 6 is still remaining.
%\subsection{Constructing a symmetric dimer model $\Gtilde$}

To find a suitable polygon $\Deltatilde$ and a dimer model $\Gtilde$,
first note the following obvious fact:

\begin{lemma} \label{lm:UY}
Let $G$ be a consistent dimer model on $T=M_\bR/M$
whose characteristic polygon is $\Delta \subset N_\bR$, and
$\Mtilde$ be a sublattice of $M$ of finite index.
Then the $M/\Mtilde$-cover $\Gtilde$ of $G$
on $\Ttilde \coloneqq \Mtilde_\bR/\Mtilde \cong M_\bR/\Mtilde$
is a consistent dimer model,
whose characteristic polygon $\Deltatilde$ is $\Delta$
considered as a lattice polygon in $\Ntilde_\bR \cong N_\bR$.
\end{lemma}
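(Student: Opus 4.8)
The plan is to realize $\Gtilde$ concretely as the pull-back $p^{-1}(G)$ of $G$ along the finite covering map $p\colon \Ttilde \to T$ whose deck group is $M/\Mtilde$, and then to check the two assertions separately: that $\Gtilde$ is again a consistent dimer model, and that its characteristic polygon is $\Delta$. The decisive observation, used throughout, is that $T$ and $\Ttilde$ have the \emph{same} universal cover $M_\bR$, and that the periodic lift $\widehat{G}\subset M_\bR$ of $G$ is literally the same object whether one passes through $T$ or through $\Ttilde$; only the lattice of periods shrinks from $M$ to $\Mtilde$. First I would record that $\Gtilde$ is a dimer model: absence of univalent nodes is a local condition on valences, preserved because $p$ is a local homeomorphism, and each face of $G$, being an open disc, has preimage a disjoint union of $d\coloneqq[M:\Mtilde]$ open discs, so every face of $\Gtilde$ is simply connected. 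The colors and edge-orientations lift because the deck transformations are translations and hence orientation-preserving, so the zigzag rule, being local, is preserved; consequently every zigzag path of $\Gtilde$ projects to one of $G$, and the preimage of a zigzag path of $G$ is a disjoint union of zigzag paths of $\Gtilde$.

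Next, consistency. The last two conditions --- no self-intersection of a zigzag path on the universal cover, and no two zigzag paths meeting twice in the same direction on the universal cover --- are assertions about $\widehat{G}\subset M_\bR$ and its zigzag lines, which are identical for $G$ and $\Gtilde$; since $G$ is consistent they hold for $\Gtilde$ verbatim. For the first condition, let $z$ be a zigzag path of $G$ with slope $m\in H_1(T,\bZ)\cong M$; consistency of $G$ forces $m\neq 0$. If $k_m$ denotes the order of $m$ in $M/\Mtilde$, then $p^{-1}(z)$ is a $d$-fold cover of the circle $z$ classified by $m \bmod \Mtilde$, so it has $d/k_m$ connected components, each a zigzag path of $\Gtilde$ wrapping $k_m$ times around $z$ and therefore of slope $k_m m\in\Mtilde$. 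As $k_m m\neq 0$, $\Gtilde$ has no homologically trivial zigzag path.

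Finally, the characteristic polygon. Since both $G$ and $\Gtilde$ are consistent, their characteristic polygons coincide with their zigzag polygons, so it suffices to match the multisets of primitive outward normals over primitive side segments. By the previous paragraph the nonzero slopes of $\Gtilde$ are exactly the vectors $\xi\coloneqq k_m m$; one checks $\xi$ is primitive in $\Mtilde$ (if $\xi=j\mu$ with $\mu\in\Mtilde$ and $j>1$, then $\mu=(k_m/j)m$ would force the order of $m$ to be $k_m/j<k_m$), and the number of zigzag paths of $\Gtilde$ of slope $\xi$ equals $(d/k_m)\cdot\ell_m$, where $\ell_m$ is the $N$-lattice length of the side $S_m$ of $\Delta$ with outward normal $m$. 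It remains to compare this with the $\Ntilde$-lattice length of the \emph{same} segment $S_m$. The hard --- or rather the only non-formal --- step is this lattice bookkeeping: restricting the functional $m$ to $N$ and to $\Ntilde$ gives short exact sequences with images $\bZ$ and $\tfrac1{k_m}\bZ$ respectively (the minimal positive value of $m$ on $\Ntilde$ is $1/k_m$, because $k_m m$ is $\Mtilde$-primitive and thus pairs onto all of $\bZ$ with $\Ntilde$), so multiplicativity of indices yields
\[
 d = [\Ntilde : N] = \bigl[\, \Ntilde\cap\ker m : N\cap\ker m \,\bigr]\cdot k_m .
\]
Hence the $\Ntilde$-lattice length of $S_m$ is $[\Ntilde\cap\ker m : N\cap\ker m]\cdot\ell_m = (d/k_m)\,\ell_m$, exactly the computed number of zigzag paths of slope $\xi$. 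Thus $\Deltatilde$ and $\Delta$, viewed in $\Ntilde_\bR$, have the same primitive outward normals and the same side lengths, so they agree up to translation, and matching reference matchings (taking that of $\Gtilde$ to be the pull-back of that of $G$) removes the translation ambiguity. I expect this compensation between the $(d/k_m)$-fold increase in zigzag multiplicity and the $(d/k_m)$-fold increase in lattice length to be the crux, everything else being a formal transfer of local data through the covering.
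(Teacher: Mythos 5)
Your proof is correct. The paper offers no argument for this lemma at all---it is introduced as ``the following obvious fact''---and what you write is precisely the verification the authors implicitly rely on: consistency transfers because the universal cover and its zigzag structure are literally unchanged, and the index identity $d=[\Ntilde:N]=c\cdot k_m$ is exactly the reason the $(d/k_m)$-fold multiplication of zigzag paths of a given slope is compensated by the $(d/k_m)$-fold refinement of the lattice along the corresponding side.
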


In other words,
a similarity transformation of the characteristic polygon
is obtained by changing the fundamental domain of the dimer model,
and this operation preserves the consistency.
If $G$ is symmetric with respect to the action of $H$
and $\Mtilde$ is invariant under $H$,
then $\Gtilde$ is also symmetric with respect to the action of $H$.

%\begin{corollary}
%Let $G$ be a consistent dimer model on which a finite subgroup $H \subset \GL(2, \bZ)$ acts
%and let $\Delta$ be its characteristic polygon.
%If $H$ commutes with a matrix $P \in M_2(\bZ)$ with $\det P >0$
%then there is a consistent dimer model $G^P$ with $H$-action whose characteristic polygon is $P^T(\Delta)$.
%\end{corollary}

We also use Proposition \ref{pr:Gulotta} below
to construct a symmetric dimer model $\Gtilde$
in some cases.

\begin{proposition}[\cite{Gulotta}] \label{pr:Gulotta}
Let $G$ be a consistent dimer model with characteristic polygon $\Delta$
and $\frakc$ be a corner of $\Delta$.
Let further $\frakd$ and $\frakd'$ be the pair of corners of $\Delta$ adjacent to $\frakc$, and
$z_1, \dots, z_l$ and $z_1', \dots, z_m'$ be zigzag paths of $G$ whose slopes are outer
normal to the sides $\frakc\frakd$ and $\frakc\frakd'$ respectively.
Take the $l$-th lattice point $R$ on $\frakc\frakd$
and the $m$-th lattice point $R'$ on $\frakc\frakd'$ counted from $\frakc$.
Let $G'$ be the bicolored graph obtained by removing all the intersections of $z_i$ and $z_j'$
for all $(i, j)$.
If $\Delta$ does not coincide with the triangle formed by the lattice points $\frakc$, $R$ and $R'$,
then $G'$ is a consistent dimer model whose characteristic polygon is the polygon $\Delta'$
obtained from $\Delta$ by removing the triangle $\frakc RR'$.
\end{proposition}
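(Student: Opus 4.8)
The plan is to work entirely in the language of zigzag paths, using the established dictionary between consistent dimer models and properly-ordered collections of oriented curves on $T$ (the equivalence of consistency and proper-orderedness is \cite[Proposition 4.4]{Ishii-Ueda_CCDM}, and for such diagrams the characteristic polygon agrees with the zigzag polygon by \cite[Theorem 3.3]{Gulotta}). Under this dictionary each zigzag path becomes an oriented strand, each intersection of two zigzag paths becomes a transverse crossing, and the two colors of nodes of $G$ are recovered from the complementary regions on which the induced boundary orientation is clockwise resp.\ counterclockwise. Removing the intersections of $z_i$ and $z_j'$ is then the oriented resolution (smoothing) of the corresponding crossings, and the whole statement becomes: this smoothing produces another properly-ordered diagram whose zigzag polygon is $\Delta'$.

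First I would pin down the local geometry at the corner. Because $G$ is consistent, the number of zigzag paths with a given primitive slope equals the lattice length of the corresponding side of $\Delta$, so $z_1,\dots,z_l$ are exactly the strands whose slope is the outward normal $\eta$ of $\frakc\frakd$ and $z_1',\dots,z_m'$ are exactly those with the outward normal $\eta'$ of $\frakc\frakd'$; in particular $R=\frakd$, $R'=\frakd'$, and the removed triangle is $\frakc\frakd\frakd'$, so the operation chops off the single corner $\frakc$. Since $\frakc\frakd$ and $\frakc\frakd'$ are adjacent sides, $\eta$ and $\eta'$ are consecutive rays of the normal fan, so every $z_i$ meets every $z_j'$, and all of these crossings occur in the same direction (their geometric number being $|\eta\wedge\eta'|$, matching the algebraic intersection number). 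Proper-orderedness guarantees that, read off in the cyclic angular order of slopes, these $\eta$- and $\eta'$-strands form an innermost block with no strand of an intermediate slope interleaved, which is what makes the simultaneous smoothing local and unambiguous.

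Next I would perform the smoothing and identify the new diagram. Oriented smoothing is homology-preserving, so the $l$ strands of class $\eta$ and the $m$ strands of class $\eta'$ are recombined into $k$ parallel strands of the primitive class $\gamma$ determined by $k\gamma=l\eta+m\eta'$, where $k$ is the lattice length of the new side $\frakd\frakd'$ and $\gamma$ its outward normal (this relation is exactly the statement that the sum of the outward normals over all primitive side segments of a closed polygon vanishes). All other strands, and all crossings not of type $(\eta,\eta')$, are untouched. I would then check that the resulting collection is again a genuine properly-ordered zigzag diagram: no strand acquires a self-intersection or a repeated same-direction crossing, since we only delete crossings and create none, and translating back through the dictionary yields a bicolored graph $G'$ with no univalent node and simply-connected faces. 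Finally, since $G'$ is consistent, its characteristic polygon equals its zigzag polygon; the slope multiset has changed only by replacing $l$ copies of $\eta$ and $m$ copies of $\eta'$ with $k$ copies of $\gamma$, which is precisely the boundary data of $\Delta\setminus\triangle\frakc\frakd\frakd'=\Delta'$.

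The hard part will be the verification that the smoothed diagram is still properly ordered and really is the zigzag diagram of an honest dimer model, and this is exactly where the hypothesis $\Delta\neq\triangle\frakc RR'$ is used. If $\Delta$ were this triangle, the recombined $\gamma$-strands would become homologically trivial and the corner region would collapse, violating the first consistency condition and destroying simple-connectivity of a face; the hypothesis rules this out and, more generally, ensures that $\gamma$ genuinely enters the angular order between the slopes neighboring $\eta$ and $\eta'$, rather than coinciding with an existing boundary direction in a way that would force a forbidden double crossing. I would therefore isolate this single degenerate case and treat everything else by appeal to the purely local nature of the smoothing move.
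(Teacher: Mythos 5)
Your overall strategy --- pass to the strand picture, use consistency $\Leftrightarrow$ properly-ordered, smooth the $(z_i,z_j')$ crossings, and read off the zigzag polygon --- is exactly the route the paper takes, but two steps go wrong.

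First, you have misread the statement: $z_1,\dots,z_l$ and $z_1',\dots,z_m'$ are a \emph{chosen subset} of the zigzag paths normal to the two sides, not necessarily all of them. That is why the proposition defines $R$ as the $l$-th lattice point from $\frakc$: in general $R\neq\frakd$ and the removed triangle $\frakc RR'$ is strictly smaller than $\frakc\frakd\frakd'$. Your identification $R=\frakd$, $R'=\frakd'$ therefore proves only the special case where $l$ and $m$ are the full lattice lengths, which is not the case in which the proposition is later applied (the corner-cutting for $D_6^1$, $D_6^2$, $D_8$ and $D_{12}$ all remove triangles whose legs are shorter than the sides of $\Deltatilde_n$). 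Relatedly, the one input that makes the recombination work is never invoked: the fact that each pair $(z_i,z_j')$ meets exactly once on the universal cover $M_\bR$ (\cite[Lemma 7.1]{Ishii-Ueda_DMSMC}). Your parenthetical claim that the geometric intersection number equals $|\eta\wedge\eta'|$ is precisely this lemma, not a formal consequence of the stated consistency axioms (which only forbid repeated \emph{same-direction} crossings and do not by themselves exclude cancelling pairs); and it is this lemma that guarantees the smoothed strands are embedded circles of slope normal to $RR'$ with no spurious contractible components, which you assert but defer.

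Second, your account of where the hypothesis $\Delta\neq\triangle\frakc RR'$ enters is incorrect. In the degenerate case the recombined strands are not homologically trivial: their total class is $l\eta+m\eta'=k\gamma\neq 0$, i.e.\ they are (anti)parallel to the zigzag paths of the third side. The actual failure is that all surviving slopes become collinear, so the complement of the resulting graph need not have simply-connected faces and $G'$ fails to be a dimer model at all. The paper uses the hypothesis exactly to guarantee that a pair of \emph{untouched} zigzag paths with linearly independent slopes survives in $G'$; in your write-up the hypothesis does no work in the non-degenerate case, so the point where it is genuinely needed is missing from the argument.
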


\begin{proof}
Since $\Delta$ does not coincide with the triangle $\frakc RR'$,
$G$ has a pair of zigzag paths other than $z_i$ or $z_j'$
whose slopes are linearly independent.
These zigzag paths remain in the resulting bicolored graph $G'$,
and hence $G'$ is a dimer model.
The operation creates several new zigzag paths,
consisting of edges in $\bigcup z_i \cup \bigcup z'_j$.
The slopes of new zigzag paths
are the outward normal vector to the line segment $RR'$,
and belong to
$
\bR_{>0}[z_i]+\bR_{>0}[z'_j]
\subset
H_1(T,\bR).
$
Note that $z_i$ and $z'_j$ intersect each other only once
on $M_\bR$ \cite[Lemma 7.1]{Ishii-Ueda_DMSMC}.
The other zigzag paths of $G$ are unchanged.
Therefore, the properly orderedness of $G$ implies that of $G'$,
and the zigzag polygon of $G'$ is $\Delta'$.
Since $G'$ is properly ordered,
the characteristic polygon of $G'$ coincides with the zigzag polygon $\Delta'$.
\end{proof}

We use Proposition \ref{pr:chop0} below
to remove the orbit of a corner:

\begin{proposition}\label{pr:chop0}
Let $G$ be a consistent symmetric dimer model
with characteristic polygon $\Delta$.
Let further $\frakc$ be a corner of $\Delta$, and
$\Delta'$ be the convex hull
of the complement
$(\Delta \cap N) \setminus H \frakc$
of the $H$-orbit of $\frakc$
in the set of lattice points of $\Delta$.
Assume that
for any $g \in H$,
the corners $\frakc$ and $g \frakc$ are not connected
by a primitive side segment of $\Delta$.
%Take the two primitive side segments $s_1$, $s_2$ of $\Delta_0$ connected to $\frakc$
%and assume
%that $\{s_1, s_2\} \cap \{gs_1, gs_2\} = \emptyset$ if $g \frakc \ne \frakc$.
Then there is a consistent symmetric dimer model $G'$
with characteristic polygon $\Delta'$.
\end{proposition}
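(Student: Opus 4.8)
The plan is to realise $G'$ by performing a single-corner removal simultaneously at every corner of the orbit $H\frakc$, carried out $H$-equivariantly, and then to read off consistency and the characteristic polygon from (the minimal instance of) \pref{pr:Gulotta}. First I would set up the local move. At the corner $\frakc$ the polygon $\Delta$ has two primitive side segments meeting, with primitive outward normals $n$ and $n'$; let $z$ and $z'$ be the two zigzag paths of $G$ whose slopes are $n$ and $n'$ and which realise these two segments. Removing the intersection of $z$ and $z'$ — unique on $M_\bR$ by \cite[Lemma 7.1]{Ishii-Ueda_DMSMC} — is the minimal case of the operation in \pref{pr:Gulotta}; by the same zigzag-path analysis as in the proof of \pref{pr:Gulotta} it leaves every other zigzag path unchanged and preserves consistency, and by \cite{Ishii-Ueda_DMSMC} (the special-McKay edge removal) its effect on the characteristic polygon is to delete exactly the corner $\frakc$, so that the new characteristic polygon is $\operatorname{conv}((\Delta\cap N)\setminus\{\frakc\})$.

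Next I would globalise over the orbit. Write $H\frakc=\{\frakc_1,\dots,\frakc_k\}$ and let $(z_i,z_i')$ be the pair of zigzag paths attached to $\frakc_i$ as above. Because $G$ is symmetric, each $h\in H$ sends the two side segments at $\frakc_i$ to the two at $h\frakc_i$, hence carries $\{z_i,z_i'\}$ to the pair at $h\frakc_i$, exchanging the colours of the incident nodes precisely when $\det h=-1$ as demanded by \pref{df:action}. Thus the union over $i$ of the edges deleted by the $k$ local moves is an $H$-invariant set, and removing it all at once yields a bicolored graph $G'$ that again satisfies the conditions of \pref{df:action}; this is the step that makes $G'$ symmetric. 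If a nontrivial element of the stabiliser of $\frakc_i$ fixes that corner it must be a reflection (a nontrivial rotation cannot fix a convex corner) and it interchanges the two side segments there, hence swaps $z_i$ and $z_i'$ and reverses the colours, so the local move at a fixed corner is itself equivariant and introduces no inconsistency.

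The heart of the argument, and the place the hypothesis is used, is that the $k$ local moves are mutually independent. Two distinct orbit corners $\frakc_i\neq\frakc_j$ can share one of the zigzag paths used in their moves only if a single primitive side segment of $\Delta$ realises a slope common to both, i.e.\ only if that segment has $\frakc_i$ and $\frakc_j$ as its two endpoints; this is exactly the configuration excluded by the assumption that $\frakc$ and $g\frakc$ are never joined by a primitive side segment. Hence the pairs $\{z_i,z_i'\}$ are pairwise disjoint, the edge sets deleted by the moves are disjoint, and the moves commute. Applying \pref{pr:Gulotta} in its single-pair form corner by corner then shows that $G'$ is a consistent dimer model and that its characteristic polygon is obtained from $\Delta$ by deleting each point of $H\frakc$, i.e.\ equals $\Delta'=\operatorname{conv}((\Delta\cap N)\setminus H\frakc)$.

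The main obstacle I anticipate is precisely this independence. Without the primitive-side-segment hypothesis two orbit corners could be joined by a length-one side and so share a zigzag path; removing its intersections with the neighbours of both corners at once would no longer be a clean instance of \pref{pr:Gulotta}, and neither the consistency of the resulting graph nor the shape of its characteristic polygon would be under control. The delicate bookkeeping is to verify that the hypothesis rules out every such collision — in particular the case where $H$ carries $\frakc$ to an adjacent corner, which the hypothesis then forces to lie along a non-primitive side, so that the two corners genuinely use different zigzag paths of the same slope.
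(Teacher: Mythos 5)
Your overall architecture --- perform the single-corner removal simultaneously and $H$-equivariantly at every corner of the orbit, and use the hypothesis to make the local moves independent --- is the same as the paper's. But there is a genuine gap at the step you dispose of in one clause, namely that ``the two corners genuinely use different zigzag paths of the same slope.'' A side of $\Delta$ of lattice length $l$ carries $l$ distinct zigzag paths with that slope, and nothing in your argument attaches a \emph{specific} one of them to a \emph{specific} primitive side segment. If you pick $z$ arbitrarily for the segment $s$ at $\frakc$, equivariance forces you to use $gz$ at $g\frakc$; in the delicate case you yourself single out ($g$ a reflection whose axis is perpendicular to a side of lattice length $l\ge 2$ joining $\frakc$ to $g\frakc$), $g$ merely permutes the $l$ zigzag paths of that slope and may a priori fix the one you picked, so that $gz=z$ and the two local moves collide even though $s\ne gs$. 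Likewise, when $g$ fixes $\frakc$ and swaps the two incident segments, you need $gz_1$ to be \emph{the} chosen $z_2$, not merely some zigzag path of the same slope, for the set of removed edges to be $g$-invariant. The hypothesis only controls the segments; transferring it to the zigzag paths requires an $H$-equivariant bijection between primitive side segments and zigzag paths of the corresponding slope. The paper builds one by taking an $H$-invariant $v_0$-generated stability parameter $\theta$ (this is where \pref{as:vertex} enters) and assigning to each primitive side segment the symmetric difference of the $\theta$-stable perfect matchings of its two endpoints; invariance of $\theta$ makes the assignment commute with $H$, so $gs\ne s$ really does imply $gz\ne z$. Without this (or a substitute) your independence claim is unsupported.

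A secondary inaccuracy: the local move is not ``removing the unique intersection of $z$ and $z'$'' and is not a special case of \pref{pr:Gulotta}. On the torus $z\cap z'$ consists of $|\det([z],[z'])|$ edges, which can exceed one, and \pref{pr:Gulotta} applied at a corner chops off the whole triangle $\frakc R R'$, not a single lattice point. Deleting exactly the corner $\frakc$ is the operation of \cite[Section 10.1]{Ishii-Ueda_DMSMC}: one builds large hexagons from the pair $(z,z')$, identifies them with the McKay quiver of an abelian subgroup of $\GL(2,\bC)$, and removes the subset of $z\cap z'$ prescribed by the special McKay correspondence; for equivariance the identification must moreover be normalized so that the hexagon of the trivial representation contains the $H$-fixed face. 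Your write-up should invoke that operation rather than \pref{pr:Gulotta}.
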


\begin{proof}
Let $s_1$ and $s_2$ be the pair of primitive side segments of $\Delta$
incident to $\frakc$.
The assumption implies
that $\{s_1, s_2\} \cap \{gs_1, gs_2\} = \emptyset$ if $g \frakc \ne \frakc$.
Moreover, we have $g s_i \ne s_i$ for any non-trivial $g \in H$.

We use the operation in \cite[Section 10.1]{Ishii-Ueda_DMSMC}
for each corner
in the orbit of $\frakc$.
In \cite[Algorithm 10.1(1)]{Ishii-Ueda_DMSMC},
we take a pair $(z_1, z_2)$ of zigzag paths
corresponding to $\frakc$.
This means that the homology classes of $z_1$ and $z_2$ are normal
to $s_1$ and $s_2$ respectively.
Notice that although $s_i$ and $gs_i$ are different for $g\ne1$,
they might be contained in the same side of $\Delta$ and in that case
$z_i$ and $gz_i$ might coincide.
We claim that
by suitably choosing $z_i$,
we may assume $g z_i \ne z_i$ for any non-trivial $g \in H$.

Choose and fix a generic stability parameter $\theta$ invariant under $H$,
such as the $v_0$-generated stability for the fixed vertex $v_0$.
Then for each lattice point in $\Delta$,
there is a unique $\theta$-stable perfect matching
corresponding to it.
%For a pair of adjacent lattice points on the boundary of $\Delta$,
%the symmetric difference of the corresponding $\theta$-stable perfect matchings
%is a zigzag path whose slope is the normal vector to the side containing them.
%Let $z_i$ be the zigzag path obtained as the symmetric difference of the
%perfect matchings corresponding to the endpoints of $s_i$.
%Since $\theta$ is invariant, the action of $H$ on the set of perfect matchings
%preserves the $\theta$-stability and hence $g z_i$ corresponds to $gs_i$ for $g \in H$.
%Therefore, the assumption $g s_i \ne s_i$ implies $g z_i \ne z_i$.
The $\theta$-stable perfect matchings corresponding to boundary lattice points have the following property:
if $D$ and $D'$ are the $\theta$-stable perfect matchings corresponding to the endpoints of a primitive side segment $s$, then $D'$ is obtained from $D$ by ``flipping" a single zigzag path $z$ such that $[z]$ is outer normal
to the segment $s$ as in \cite[Corollary 3.8]{Gulotta}.
Indeed,
it follows from \cite[Corollary 3.8]{Gulotta} that
$D'$ is obtained from $D$ by flipping finitely many zigzag paths
$w_1, \dots, w_m$ with the same slope.
This means that
\begin{itemize}
\item
every other edge of $w_i$ belongs to $D$, and
\item
$D'$ is obtained from $D$
by replacing $D\cap w_i$ with $w_i \setminus D$
for all $i = 1, \ldots, m$.
\end{itemize}
If $m>1$, then since the height change $h(D, D')$ is a primitive vector, we can choose $w_1$ and $w_2$
so that their contributions to the height change cancel each other.
Notice that $T \setminus (w_1\cup w_2)$ has two connected components
and by our choice of $w_1$ and $w_2$, one connected component determines
submodules of $\bC\Gamma$-modules corresponding to the perfect matching $D$
and the same component determines quotient modules of $\bC\Gamma$-modules corresponding to $D'$.
This contradicts the $\theta$-stability of the perfect matchings $D$ and $D'$ and proves $m=1$.
Moreover, by fixing $\theta$, we obtain a bijective correspondence between the zigzag paths of $G$ and the primitive side segments of the characteristic polygon.
Let $z_i$ be the zigzag path corresponding to $s_i$ in this bijection.
Then $g(D_i)$ is obtained from $g(D_{\frakc})$ by flipping $g(z_i)$.
Since $\theta$ is invariant, the action of $H$ on the set of perfect matchings
preserves the $\theta$-stability and hence $g(D_{\frakc})$ and $g(D_i)$ are also $\theta$-stable.
This proves that $g(z_i)$ corresponds to $g(s_i)$ and $g(s_i) \ne s_i$ implies $g(z_i)\ne z_i$.

%%%%%%%
As in \cite[Algorithm 10.1(2)]{Ishii-Ueda_DMSMC},
we construct large hexagons from the pair $(z_1, z_2)$, and
identify them with vertices of the McKay quiver for a finite abelian
group $A \subset \GL(2, \bC) \subset \SL(3, \bC)$,
in such a way that
%Here the identification depends on the choice of
the large hexagon
corresponding to the trivial representation
contains the $H$-fixed face.
%and in our case, we choose the large hexagon containing the fixed face.
%Thus the choice is symmetric with respect to the $H$-action.
%%%%%%%%
Then we remove several edges on $z_1 \cap z_2$
as in \cite[Algorithm 10.1(3)]{Ishii-Ueda_DMSMC},
and for each $g\in H$, we do the same operation using the pair $(gz_1, gz_2)$.
If $g \frakc \ne \frakc$, then the assumption implies $\{z_1, z_2\} \cap \{gz_1, gz_2\} =\emptyset$
and hence the operations for $\{z_1, z_2\}$ and $\{gz_1, gz_2\}$ are independent.
If $g \frakc = \frakc$, then the action of $g$ exchanges $z_1$ and $z_2$,
preserving the edges to be removed.
Hence the consistent dimer model $G'$
obtained from $G$ by the successive operations
for the corners in the orbit of $\frakc$
is preserved by the action of $H$. 
The face of $G'$ containing the fixed face of $G$ is also fixed by $H$.
\end{proof}

\begin{example}
As an illustration of
Proposition \ref{pr:chop0},
consider the lattice triangle $\Delta$ and the dimer model $G$
having $\Delta$ as the characteristic polygon, shown in Figure \ref{fg:134_triangle}. Both $\Delta$ and $G$ are symmetric under $R_2$.

\begin{figure}[h]
\[
\begin{array}{ccc}
\begin{tikzpicture}[scale=0.75]
	\draw[fill=black] (0,0) circle   (1mm);
	\draw[fill=black] (-1,3) circle   (1mm);
	\draw[fill=black] (0,2) circle   (1mm);
	\draw[fill=black] (1,1) circle   (1mm);
	\draw[fill=black] (0,1) circle   (1mm);
	\draw[fill=black] (1,0) circle   (1mm);
	\draw[fill=black] (2,0) circle   (1mm);
	\draw[fill=black] (3,-1) circle   (1mm);	
	\draw[thick,-] (0,0) -- (-1,3) -- (3,-1) -- cycle;
	\node at (0,-1.5) {};	
\end{tikzpicture}
~~~~~~~~~~~&
\begin{tikzpicture}[scale=0.35,bend angle=45, looseness=1]
	\coordinate (A) at (0,0) {};
	\coordinate (B) at (12,0) {};
	\coordinate (C) at (0,12) {};	
	\coordinate (D) at (12,12) {};
	\coordinate (J1) at (1.2,0) {};	
	\coordinate (I1) at (4.29,0) {};	
	\coordinate (C2) at (7.71,0) {};	
	\coordinate (B2) at (10.8,0) {};	
	\coordinate (Z1) at (12,1.2) {};	
	\coordinate (W1) at (12,4.29) {};	
	\coordinate (V1) at (12,7.71) {};	
	\coordinate (U1) at (12,10.8) {};	
	\coordinate (T1) at (10.8,12) {};	
	\coordinate (S1) at (7.71,12) {};	
	\coordinate (R1) at (4.29,12) {};	
	\coordinate (Q1) at (1.2,12) {};	
	\coordinate (P1) at (0,10.8) {};	
	\coordinate (N1) at (0,7.71) {};	
	\coordinate (M1) at (0,4.29) {};	
	\coordinate (K1) at (0,1.2) {};	
	\coordinate (S) at (0.5,11.5) {};
	\coordinate (W) at (7,11) {};	
	\coordinate (A1) at (8,10) {};	
	\coordinate (T) at (2.5,9.5) {};	
	\coordinate (D2) at (3.5,8.5) {};	
	\coordinate (P) at (1,5) {};	
	\coordinate (O) at (2,4) {};	
	\coordinate (E2) at (5.5,6.5) {};	
	\coordinate (F2) at (6.5,5.5) {};	
	\coordinate (C1) at (10,8) {};	
	\coordinate (E1) at (11,7) {};	
	\coordinate (G2) at (8.5,3.5) {};	
	\coordinate (M) at (9.5,2.5) {};	
	\coordinate (L) at (4,2) {};	
	\coordinate (J) at (5,1) {};	
	\coordinate (K) at (11.5,0.5) {}; 
	\node at (0,-.25) {};	

	\draw[thick,-] (C) -- (S) -- (Q1) -- (S) -- (P1);
	\draw[thick,-] (N1) -- (T) -- (R1);
	\draw[thick,-] (M1) -- (P) -- (D2) -- (W) -- (S1);	
	\draw[thick,-] (K1) -- (O) -- (E2) -- (A1) -- (T1);
	\draw[thick,-] (J1) -- (L) -- (F2) -- (C1) -- (U1);
	\draw[thick,-] (I1) -- (J) -- (G2) -- (E1) -- (V1);
	\draw[thick,-] (C2) -- (M) -- (W1);
	\draw[thick,-] (B2) -- (K) -- (Z1) -- (K) -- (B);
	\draw[thick,-] (T) -- (D2);
	\draw[thick,-] (W) -- (A1);
	\draw[thick,-] (P) -- (O);
	\draw[thick,-] (E2) -- (F2);
	\draw[thick,-] (C1) -- (E1);
	\draw[thick,-] (L) -- (J);
	\draw[thick,-] (G2) -- (M);

	\draw[fill=black] (S) circle (2.5mm);
	\draw[fill=black] (D2) circle (2.5mm);
	\draw[fill=black] (O) circle (2.5mm);
	\draw[fill=black] (A1) circle (2.5mm);	
	\draw[fill=black] (J) circle (2.5mm);	
	\draw[fill=black] (E1) circle (2.5mm);	
	\draw[fill=black] (M) circle (2.5mm);
	\draw[fill=black] (F2) circle (2.5mm);

	\draw[fill=white] (T) circle (2.5mm);
	\draw[fill=white] (P) circle (2.5mm);
	\draw[fill=white] (W) circle (2.5mm);
	\draw[fill=white] (E2) circle (2.5mm);	
	\draw[fill=white] (L) circle (2.5mm);	
	\draw[fill=white] (C1) circle (2.5mm);	
	\draw[fill=white] (K) circle (2.5mm);
	\draw[fill=white] (G2) circle (2.5mm);
			
	\draw[color=gray] (A) -- (B) -- (12,12) --  (0,12) -- cycle;

%	\node at (\h+\l/2,0) {\gray 3};
%	\node at (0,-\h-\l/2) {\gray 2};
%	\node at (\h+\l/2,-\h-\l/2) {\gray 0};
\end{tikzpicture}

~~~~~~~~~~~&
\begin{tikzpicture}[scale=0.35,bend angle=45,>=latex]
% quiver part
	\coordinate (dA) at (0,0) {};
	\coordinate (dB) at (12,0) {};
	\coordinate (dC) at (0,12) {};	
	\coordinate (dD) at (12,12) {};
	\coordinate (dJ1) at (1.2,0) {};	
	\coordinate (dI1) at (4.29,0) {};	
	\coordinate (dC2) at (7.71,0) {};	
	\coordinate (dB2) at (10.8,0) {};	
	\coordinate (dZ1) at (12,1.2) {};	
	\coordinate (dW1) at (12,4.29) {};	
	\coordinate (dV1) at (12,7.71) {};	
	\coordinate (dU1) at (12,10.8) {};	
	\coordinate (dT1) at (10.8,12) {};	
	\coordinate (dS1) at (7.71,12) {};	
	\coordinate (dR1) at (4.29,12) {};	
	\coordinate (dQ1) at (1.2,12) {};	
	\coordinate (dP1) at (0,10.8) {};	
	\coordinate (dN1) at (0,7.71) {};	
	\coordinate (dM1) at (0,4.29) {};	
	\coordinate (dK1) at (0,1.2) {};	
	\coordinate (dS) at (0.5,11.5) {};
	\coordinate (dW) at (7,11) {};	
	\coordinate (dA1) at (8,10) {};	
	\coordinate (dT) at (2.5,9.5) {};	
	\coordinate (dD2) at (3.5,8.5) {};	
	\coordinate (dP) at (1,5) {};	
	\coordinate (dO) at (2,4) {};	
	\coordinate (dE2) at (5.5,6.5) {};	
	\coordinate (dF2) at (6.5,5.5) {};	
	\coordinate (dC1) at (10,8) {};	
	\coordinate (dE1) at (11,7) {};	
	\coordinate (dG2) at (8.5,3.5) {};	
	\coordinate (dM) at (9.5,2.5) {};	
	\coordinate (dL) at (4,2) {};	
	\coordinate (dJ) at (5,1) {};	
	\coordinate (dK) at (11.5,0.5) {}; 
% dimer nodes
	\node (A) at (0,0) {};
	\node (B) at (12,0) {};
	\node (C) at (0,12) {};	
	\node (D) at (12,12) {};
	\node (F) at (6,0) {6};	
	\node (M) at (12,6) {2};	
	\node (L1) at (6,12) {6};	
	\node (P) at (0,6) {2};	
	\node (N) at (3,3) {0};	
	\node (I) at (10.5,1.5) {7};	
	\node (L) at (7.5,4.5) {1};	
	\node (Q) at (4.5,7.5) {3};	
	\node (R) at (9,9) {4};	
	\node (V) at (1.5,10.5) {5};

% quiver N arrows
	\draw[red, thick,->] (V) -- (2,12);
	\draw[red, thick,->] (P) -- (V);
	\draw[red, thick,->] (Q) -- (L1);
	\draw[red, thick,->] (N) -- (Q);
	\draw[red, thick,->] (2,0) -- (N);
	\draw[red, thick,->] (R) -- (10,12);
	\draw[red, thick,->] (L) -- (R);
	\draw[red, thick,->] (F) -- (L);
	\draw[red, thick,->] (I) -- (M);
	\draw[red, thick,->] (10,0) -- (I);
% quiver NE arrows
	\draw[red, thick,->] (0,10) -- (V);
	\draw[red, thick,->] (V) -- (L1);
	\draw[red, thick,->] (P) -- (Q);
	\draw[red, thick,->] (Q) -- (R);
	\draw[red, thick,->] (R) -- (12,10);
	\draw[red, thick,->] (0,2) -- (N);
	\draw[red, thick,->] (N) -- (L);
	\draw[red, thick,->] (L) -- (M);
	\draw[red, thick,->] (F) -- (I);
	\draw[red, thick,->] (I) -- (12,2);
% quiver SW arrows
	\draw[red, thick,->] (3,12) -- (V);
	\draw[red, thick,->] (V) -- (0,9);
	\draw[red, thick,->] (L1) -- (P);
	\draw[red, thick,->] (9,12) -- (Q);
	\draw[red, thick,->] (Q) -- (0,3);
	\draw[red, thick,->] (12,12) -- (R);
	\draw[red, thick,->] (R) -- (N);
	\draw[red, thick,->] (N) -- (0,0);
	\draw[red, thick,->] (12,9) -- (L);
	\draw[red, thick,->] (L) -- (3,0);
	\draw[red, thick,->] (M) -- (F);
	\draw[red, thick,->] (12,3) -- (I);
	\draw[red, thick,->] (I) -- (9,0);

	\draw[color=gray] (0,0) -- (F) -- (12,0) -- (M) -- (12,12) --  (L1) -- (0,12) -- (P) -- (0,0);

% dimer edges
	\draw[gray,-] (dC) -- (dS) -- (dQ1) -- (dS) -- (dP1);
	\draw[gray,-] (dN1) -- (dT) -- (dR1);
	\draw[gray,-] (dM1) -- (dP) -- (dD2) -- (dW) -- (dS1);	
	\draw[gray,-] (dK1) -- (dO) -- (dE2) -- (dA1) -- (dT1);
	\draw[gray,-] (dJ1) -- (dL) -- (dF2) -- (dC1) -- (dU1);
	\draw[gray,-] (dI1) -- (dJ) -- (dG2) -- (dE1) -- (dV1);
	\draw[gray,-] (dC2) -- (dM) -- (dW1);
	\draw[gray,-] (dB2) -- (dK) -- (dZ1) -- (dK) -- (dB);
	\draw[gray,-] (dT) -- (dD2);
	\draw[gray,-] (dW) -- (dA1);
	\draw[gray,-] (dP) -- (dO);
	\draw[gray,-] (dE2) -- (dF2);
	\draw[gray,-] (dC1) -- (dE1);
	\draw[gray,-] (dL) -- (dJ);
	\draw[gray,-] (dG2) -- (dM);

% dimer vertex		
	\draw[fill=gray] (dS) circle (2.25mm);
	\draw[fill=gray] (dD2) circle (2.25mm);
	\draw[fill=gray] (dO) circle (2.25mm);
	\draw[fill=gray] (dA1) circle (2.25mm);	
	\draw[fill=gray] (dJ) circle (2.25mm);	
	\draw[fill=gray] (dE1) circle (2.25mm);	
	\draw[fill=gray] (dM) circle (2.25mm);
	\draw[fill=gray] (dF2) circle (2.25mm);

	\draw[gray, fill=white] (dT) circle (2.25mm);
	\draw[gray, fill=white] (dP) circle (2.25mm);
	\draw[gray, fill=white] (dW) circle (2.25mm);
	\draw[gray, fill=white] (dE2) circle (2.25mm);	
	\draw[gray, fill=white] (dL) circle (2.25mm);	
	\draw[gray, fill=white] (dC1) circle (2.25mm);	
	\draw[gray, fill=white] (dK) circle (2.25mm);
	\draw[gray, fill=white] (dG2) circle (2.25mm);
\end{tikzpicture}

\end{array}
\]
   \caption{Lattice triangle $\Delta$ and dimer model $G$ both symmetric under $R_2$, and the McKay quiver of $\frac{1}{8}(1,3,4)$ dual to $G$.}
\label{fg:134_triangle}
\end{figure}

The affine toric variety $X_\Delta$ is isomorphic
to the quotient of the affine space $\bC^3$
by the cyclic subgroup
$\la \frac{1}{8}(1,3,4) \ra \subset \SL_3(\bC)$
of order 8
generated by
$\diag(\zeta, \zeta^3, \zeta^4)$,
where $\zeta$ is a primitive 8-th root of unity.
In general,
given a subgroup $\la \frac{1}{n}(1,q,n-q-1) \ra$ of $\SL_3(\bC)$,
we define integers $r, b_1, \ldots, b_r, i_0, \ldots, i_{r+1}$ by
$i_0 \coloneqq n$,
$i_1 \coloneqq q$,
$
i_t = b_{t+1} i_{t+1} - i_{t+2}
$
(where $0 < i_{t+2} < i_{t+1}$),
$i_r = 1$, and $i_{r+1} = 0$
as explained in \cite[Section 4]{Ishii-Ueda_DMSMC}
(which goes back to \cite{Wunram2,Wunram}).
For $n=8$ and $q=3$,
we have
\begin{align}
 8 = 3 \cdot 3 - 1,
\end{align}
so that $r=2$ and
$(i_0, i_1, i_2) = (8, 3, 1)$.
By removing the edges of $G$
dual to the arrows of the McKay quiver
corresponding to `multiplication by $z$'
(i.e., those which goes in the southwest direction
in the quiver of Figure \ref{fg:134_triangle})
from the vertices $i_0$, $i_1$, $i_2$
and removing divalent nodes,
one obtains the dimer model $G'$
shown in Figure \ref{fg:trapezoid_dimer},
whose characteristic polygon is the trapezoid
shown in the same figure.

\begin{figure}[h]
\[
\begin{array}{cc}
\begin{tikzpicture}[scale=0.75]
%	\draw[fill=black] (0,0) circle   (1mm);
	\draw[fill=black] (-1,3) circle   (1mm);
	\draw[fill=black] (0,2) circle   (1mm);
	\draw[fill=black] (1,1) circle   (1mm);
	\draw[fill=black] (0,1) circle   (1mm);
	\draw[fill=black] (1,0) circle   (1mm);
	\draw[fill=black] (2,0) circle   (1mm);
	\draw[fill=black] (3,-1) circle   (1mm);	
	\draw[thick,-] (0,1) -- (-1,3) -- (3,-1) -- (1,0) -- cycle;
	\node at (0,-1.5) {};	
\end{tikzpicture}
~~~~~~~~~~~~~~~~&
\begin{tikzpicture}[scale=0.35,bend angle=45, looseness=1]

	\draw[thick,-] (0,10) -- (2,12);
	\draw[thick,-] (0,8) -- (4,12);
	\draw[thick,-] (0,4) -- (8,12);	
	\draw[thick,-] (0,2) -- (10,12);
	\draw[thick,-] (2,0) -- (12,10);
	\draw[thick,-] (4,0) -- (12,8);
	\draw[thick,-] (8,0) -- (12,4);
	\draw[thick,-] (10,0) -- (12,2);
	\draw[thick,-] (2,10) -- (4,8);
	\draw[thick,-] (7,11) -- (8,10);
	\draw[thick,-] (5,7) -- (7,5);
	\draw[thick,-] (10,8) -- (11,7);
	\draw[thick,-] (8,4) -- (10,2);

	\draw[fill=black] (4,8) circle (2.5mm);
	\draw[fill=black] (8,10) circle (2.5mm);
	\draw[fill=black] (7,5) circle (2.5mm);
	\draw[fill=black] (11,7) circle (2.5mm);	
	\draw[fill=black] (10,2) circle (2.5mm);	

	\draw[fill=white] (2,10) circle (2.5mm);
	\draw[fill=white] (7,11) circle (2.5mm);
	\draw[fill=white] (5,7) circle (2.5mm);
	\draw[fill=white] (10,8) circle (2.5mm);	
	\draw[fill=white] (8,4) circle (2.5mm);	
			
	\draw[color=gray] (0,0) -- (12,0) -- (12,12) --  (0,12) -- cycle;

%	\node at (\h+\l/2,0) {\gray 3};
%	\node at (0,-\h-\l/2) {\gray 2};
%	\node at (\h+\l/2,-\h-\l/2) {\gray 0};
\end{tikzpicture}
\end{array}
\]
\caption{The dimer model $G'$ also symmetric with respect to $R_2$.}
\label{fg:trapezoid_dimer}
\end{figure}

\end{example}

\subsection{Cyclic groups}
 \label{sc:cyclic}

In this section,
we assume that $H$ is a cyclic group of order $n$
consisting of rotations.
In this case, Proposition \ref{pr:chop0} implies the following:

\begin{corollary} \label{cr:chop1}
Let $G$ be a consistent symmetric dimer model
with characteristic polygon $\Delta$.
Let further $\frakc$ be a corner of $\Delta$ and
$\Delta'$ be the lattice polygon
obtained from $\Delta$
by removing the orbit of $\frakc$.
Assume that one of the following holds:
\begin{enumerate}[(1)]
 \item \label{it:non_n-gon}
$\Delta$ is not an $n$-gon.
 \item \label{it:n-gon}
$\Delta$ is an $n$-gon with a
boundary lattice point which is not a corner.
\end{enumerate}
Then there is a consistent symmetric dimer model $G'$
with characteristic polygon $\Delta'$.
\end{corollary}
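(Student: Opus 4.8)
The plan is to deduce this directly from \pref{pr:chop0}. Since the conclusion asserted here is identical to that of \pref{pr:chop0}, it suffices to verify that, under condition \ref{it:non_n-gon} or \ref{it:n-gon}, the corner $\frakc$ satisfies the hypothesis of \pref{pr:chop0}: that $\frakc$ and $g\frakc$ are \emph{not} joined by a primitive side segment of $\Delta$ for any nontrivial $g \in H$. I would argue by contradiction, showing that a violation of this hypothesis forces $\Delta$ to be an $n$-gon all of whose sides are primitive side segments, a configuration excluded by both \ref{it:non_n-gon} (where $\Delta$ is not an $n$-gon) and \ref{it:n-gon} (where $\Delta$ is an $n$-gon \emph{with} a non-corner boundary lattice point).

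First I would record the geometry of the action. Since $H$ is a nontrivial group of rotations it has a unique fixed point $O$, the centroid of $\Delta$, which lies in the interior of $\Delta$; no corner is fixed by a nontrivial element, so the stabilizer of any corner is trivial and $H\frakc$ consists of $n$ corners, situated at the $n$ angles $\tfrac{2\pi j}{n}$ ($j = 0,\dots,n-1$) about $O$ at a common distance. As $\Delta$ is convex with $O$ in its interior, its corners are met in angular order along $\partial\Delta$. Now suppose, for some nontrivial $g$, that $\frakc$ and $g\frakc$ were joined by a primitive side segment $s$. Then $s$ is a straight boundary chord from $\frakc$ to $g\frakc$ containing no lattice point in its interior; in particular no orbit corner lies strictly on the arc of $\partial\Delta$ traced by $s$, so that arc subtends the minimal gap $\tfrac{2\pi}{n}$, forcing $g$ to be a generator $g_0$ of $H$ or its inverse. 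In either case a pair of \emph{consecutive} orbit corners is joined by a primitive side segment.

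Next I would propagate this around the boundary using $H$-invariance. Applying $g_0$ repeatedly, each $g_0^{j}s$ is a primitive side segment joining the consecutive orbit corners $g_0^{j}\frakc$ and $g_0^{j+1}\frakc$. A chord of a convex polygon lying on its boundary coincides with the boundary arc it subtends, so the portion of $\partial\Delta$ between $g_0^{j}\frakc$ and $g_0^{j+1}\frakc$ is exactly the straight segment $g_0^{j}s$; taking the union over $j$ gives $\partial\Delta = \bigcup_{j} g_0^{j}s$. Hence $\Delta$ has no corner outside $H\frakc$ and no lattice point in the interior of any side, i.e.\ $\Delta$ is an $n$-gon whose sides are all primitive side segments. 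This contradicts \ref{it:non_n-gon}, and also contradicts \ref{it:n-gon}, which requires a boundary lattice point that is not a corner. Therefore the hypothesis of \pref{pr:chop0} holds, and the existence of $G'$ follows from that proposition.

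The main obstacle, and the step that needs the most care, is the geometric propagation: turning a single assumed primitive side segment into a statement about the whole boundary. The crux is justifying that a boundary chord between adjacent orbit corners forces the entire subtended boundary arc to be straight and free of interior lattice points, so that the rotated copies $g_0^{j}s$ exhaust $\partial\Delta$ and leave no room for any further corner or non-corner boundary lattice point; once this is established, the parity/divisibility bookkeeping against conditions \ref{it:non_n-gon} and \ref{it:n-gon} is immediate.
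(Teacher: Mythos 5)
Your proposal is correct and takes the same route as the paper: the corollary is deduced from \pref{pr:chop0}, and the only thing to check is its hypothesis that $\frakc$ and $g\frakc$ are never joined by a primitive side segment, which the paper leaves implicit and you verify correctly. Your propagation argument (a single such segment, rotated by the generator, would force $\partial\Delta$ to be the union of the $n$ rotated primitive segments, making $\Delta$ an $n$-gon with no non-corner boundary lattice points, contradicting both conditions) is exactly the intended justification.
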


\subsubsection{The group $C_2$}

In this case,
we can embed $\Delta$ in a square $\Deltatilde$
and iterate the operations in Corollary \ref{cr:chop1},
since Condition \eqref{it:non_n-gon} in \ref{cr:chop1}
always holds for $n=2$.

%\begin{comment}
%
%\[
%\begin{tikzpicture}[scale=1.2]
%% Draw big square  	
%  	\draw[thick] (1,-1) -- (1,1) -- (-1,1) -- (-1,-1) -- cycle;
%  	\draw[fill=black] (1,-1) circle (0.75mm);
%  	\draw[fill=black] (1,1) circle (0.75mm);
%  	\draw[fill=black] (-1,1) circle (0.75mm);
%  	\draw[fill=black] (-1,-1) circle (0.75mm);
%  	\draw (1,-1) node [below right] {$P$};
%  	\draw (-1,1) node [above left] {$gP$};
%  	
%% Draw interior hexagon  	
%  	\draw[thick] (0.5,0) -- (0,0.5) -- (-0.5,0.5) -- (-0.5,0) -- (0,-0.5) -- (0.5,-0.5) -- cycle;
%  	\draw[fill=black] (0.5,0) circle (0.75mm);
%  	\draw[fill=black] (0,0.5) circle (0.75mm);
%  	\draw[fill=black] (-0.5,0.5) circle (0.75mm);
%  	\draw[fill=black] (-0.5,0) circle (0.75mm);
% 	\draw[fill=black] (0,-0.5) circle (0.75mm);
%  	\draw[fill=black] (0.5,-0.5) circle (0.75mm);
%
%% Draw zigzags
%	\draw[->,red] (1.2,-0.5) -- (1.8,-0.5);\draw[red] (1.5,-0.3) node {$z_1$};
%	\draw[->,red] (0.5,-1.2) -- (0.5,-1.8);\draw[red] (0.2,-1.5) node {$z_2$};
%	\draw[->,red] (-1.2,0.5) -- (-1.8,0.5);\draw[red] (-1.5,0.2) node {$gz_1$};
%	\draw[->,red] (-0.5,1.2) -- (-0.5,1.8);\draw[red] (-0.2,1.5) node {$gz_2$};
%\end{tikzpicture}
%\]
%
%\end{comment}

\subsubsection{The group $C_3$}
 \label{sc:C3}

Let $\Delta_n$ be the convex hull of
$(n, -n)$, $(n, 2n)$ and $(-2n, -n)$,
which is the characteristic polygon
of the hexagonal dimer model $G_n$
associated with the McKay quiver
of the abelian subgroup $A$ of $\SL(3,\bC)$
isomorphic to
$
 \bZ/3n\bZ \times \bZ/3n\bZ.
$
By translating $G_n$ if necessary,
we assume that the face
corresponding to the trivial representation of $A$
is fixed by the action of $H$.
For a symmetric lattice polygon $\Delta$,
take the minimum integer $n$ such that
$
 \Delta \subset \Delta_n
$
and put
$
 \Deltatilde \coloneqq \Delta_n.
$
Then we have
$
 \partial \Deltatilde \cap \Delta \ne \emptyset.
$
By starting from $\Gtilde \coloneqq G_n$
and iterate the operations in Corollary \ref{cr:chop1},
we obtain a consistent symmetric dimer model.

\begin{remark}
For a lattice polygon $\Delta$ with rotational symmetry of order $3$
whose center is not a lattice point
(in this case $C_3 \subset \GL(N) \ltimes N$ but $C_3 \not\subset \GL(N)$),
we can embed $\Delta$ into a lattice polygon
corresponding to the Abelian subgroup of $\SL(3, \bC)$
isomorphic to $\bZ/2n\bZ \times \bZ/2n\bZ$,
and the same method
produces a consistent symmetric dimer model. This includes in our treatment the case when $X_\Delta/H\cong\bC^3/G$ where $G$ is a trihedral group in $\SL(3,\bC)$.
\end{remark}

\subsubsection{The group $C_4$}

Let $\Delta_n$ be the convex hull
of $(\pm n, 0)$ and $(0, \pm n)$.
A dimer model $G_n$
with characteristic polygon $\Delta_n$
can be obtained from
the consistent dimer model
with characteristic polygon $\Delta_1$
shown in Figure \ref{fg:Rotation4}
by using Lemma
%changing the fundamental region as in
\ref{lm:UY}.
This dimer model is symmetric
with respect to the action of the group $C_4$
fixing an octogonal face.
Note that the face of a dimer model
symmetric under a rotation of order $4$
must have at least 8 edges.

Given a $C_4$-invariant lattice polygon $\Delta$,
we embed it into $\Delta_n$ with the smallest $n$,
and iterate the operations in Corollary \ref{cr:chop1}
to obtain a consistent symmetric dimer model
with characteristic polygon $\Delta$.

\begin{figure}[h]
\[
\begin{array}{cc}
\begin{tikzpicture}[scale=0.75]
	\draw[fill=black] (0,0) circle   (1mm);
	\draw[fill=black] (-1,1) circle   (1mm);
	\draw[fill=black] (0,2) circle   (1mm);
	\draw[fill=black] (1,1) circle   (1mm);
	\draw[fill=black] (0,1) circle   (1mm);
	\draw[thick,-] (0,0) -- (-1,1) -- (0,2) -- (1,1) -- cycle;
\end{tikzpicture}
~~~~~~~~~~~~~~~~&
\begin{tikzpicture}[scale=1.2,bend angle=45, looseness=1]
	\def\h{0.9238} %height of the octagon
	\def\l{0.76537} %length of side of the octagon = side of square
	\def\c{0.5412} %length of side of the octagon = side of square
	
	\draw[color=gray] (-\l/2-\c/2,\l/2+\c/2) -- (\h+\l+\c/2,\l/2+\c/2) -- (\h+\l+\c/2,-\h-\l-\c/2) --  (-\l/2-\c/2,-\h-\l-\c/2) -- cycle;

	\draw[thick]  (-\l/2-\c/2,-\h-\l-\c/2) -- (-\l/2,-\h-\l) -- (\l/2,-\h-\l) -- (\l/2+\c/2,-\h-\l-\c/2); 
	\draw[thick] (-\l/2-\c/2,-\l/2-\c/2) -- (-\l/2,-\h) -- (-\l/2,-\h-\l);
	\draw[thick] (-\l/2,-\h) -- (\l/2,-\h) -- (\l/2,-\h-\l);
	\draw[thick] (\l/2,-\h) -- (\h,-\l/2) -- (\h,\l/2) -- (\h-\c/2,\l/2+\c/2);
	\draw[thick] (\h,-\l/2) -- (\h+\l,-\l/2) -- (\h+\l+\c/2,-\l/2-\c/2);
	\draw[thick] (\h+\l,-\l/2) -- (\h+\l,\l/2) -- (\h,\l/2) -- (\h+\l,\l/2) -- (\h+\l+\c/2,\l/2+\c/2);

	\draw[fill=white] (-\l/2,-\h) circle (1mm);
	\draw[fill=black] (\l/2,-\h) circle (1mm);
	\draw[fill=black] (-\l/2,-\h-\l) circle (1mm);
	\draw[fill=white] (\l/2,-\h-\l) circle (1mm);

	\draw[fill=white] (\h,-\l/2) circle (1mm);
	\draw[fill=black] (\h,\l/2) circle (1mm);
	\draw[fill=black] (\h+\l,-\l/2) circle (1mm);
	\draw[fill=white] (\h+\l,\l/2) circle (1mm);

%	\node at (0,0) {\gray 1};
%	\node at (\h+\l/2,0) {\gray 3};
%	\node at (0,-\h-\l/2) {\gray 2};
%	\node at (\h+\l/2,-\h-\l/2) {\gray 0};
\end{tikzpicture}
\end{array}
\]
   \caption{Lattice polygon $\Delta_1$ and the dimer model $G_1$.}
\label{fg:Rotation4}
\end{figure}  

\subsubsection{The group $C_6$}

Let $G_1$ be the dimer model
%shown in \pref{fg:Hexagon}
with characteristic polygon $\Delta_1$
shown in Figure \ref{fg:Hexagon}.
The dimer model $G_n$
with characteristic polygon $\Delta_n \coloneqq n \Delta_1$
is obtained as the $\bZ/n\bZ \times \bZ/n\bZ$-cover of $G_1$
by using Lemma \ref{lm:UY}
as in previous cases.
Given a $C_6$-invariant lattice polygon $\Delta$,
we embed it into $\Delta_n$ with the smallest $n$,
and iterate the operations in Corollary \ref{cr:chop1}
to obtain a consistent symmetric dimer model
with characteristic polygon $\Delta$.

%Note that it is the dimer model of Type $\ast632$ in Figure \ref{dimer17}. 

\begin{figure}[h]
\[
\begin{array}{cc}
\begin{tikzpicture}[scale=0.75]
	\def\hexdot#1#2{
  	\draw[thick] #1 #2 +(0:1) \foreach \a in {60,120,180,240,300} { -- +(\a:1) } -- cycle;
 	\draw[fill=black] #1 #2 \foreach \a in {0,60,120,180,240,300} { +(\a:1) circle (1mm) } ;
 	}
	\hexdot{(0,0)}{(0,0)}
	\draw[fill=black] (0,0) circle   (1mm);
\end{tikzpicture}

~~~~~~~~~~~~~&

\begin{tikzpicture}[scale=0.75]
% Definition of hexagon (thick)
\def\hext#1#2{
  	\draw[thick] #1 #2 +(0:1) \foreach \a in {60,120,180,240,300} { -- +(\a:1) } -- cycle;
 	\draw[fill=white] #1 #2 \foreach \a in {0,120,240} { +(\a:1) circle (1.5mm) } ;
  	\draw[fill=black] #1 #2 \foreach \a in {60,180,300} { +(\a:1) circle (1.5mm) } ;
	}
% Definition of hexagon (thick and reverse colors)
\def\hexopt#1#2{
  	\draw[thick] #1 #2 +(0:1) \foreach \a in {60,120,180,240,300} { -- +(\a:1) } -- cycle;
 	\draw[fill=black] #1 #2 \foreach \a in {0,120,240} { +(\a:1) circle (1.5mm) } ;
  	\draw[fill=white] #1 #2 \foreach \a in {60,180,300} { +(\a:1) circle (1.5mm) } ;
	}
% Rest of the lines	
	\draw[thick] (1,0) -- (1+\hh,0.5);\draw[thick] (60:1) -- (0.5+\hh,0.5+\hh);
	\draw[thick] (-0.5,-\hh) -- (-0.5,-0.5-\hh);\draw[thick] (0.5,-\hh) -- (0.5,-0.5-\hh);
	\draw[thick] (1+\hh,0.5+2*\hh) -- (1+\hh,1+2*\hh);\draw[thick] (2+\hh,0.5+2*\hh) -- (2+\hh,1+2*\hh);
	\draw[thick] (-1,0) -- (-1-0.5*\hh,0.25);\draw[thick] (-0.5,\hh) -- (-0.5-0.5*\hh,\hh+0.25);
	\draw[thick] (2+\hh,0.5) -- (2+\hh+0.5*\hh,0.5-0.25);\draw[thick] (2.5+\hh,0.5+\hh) -- (2.5+\hh+0.5*\hh,0.5+\hh-0.25);
% hexagons	
	\hexopt{(0,0)}{(0,0)}
	\hext{(0,0)}{(30:2*\hh+1)}
% for the fundamental domain
	\draw[color=gray] (-1.5-\hh,-0.5-\hh) -- (0,1+2*\hh) -- (3+2*\hh,1+2*\hh) -- (1.5+\hh,-0.5-\hh) -- cycle;
% Numbers
%	\node at (-1.5-\hh,-0.5-\hh) {$\red 0$};\node at (0,1+2*\hh) {$\red 0$};
%	\node at (3+2*\hh,1+2*\hh) {$\red 0$};\node at (1.5+\hh,-0.5-\hh) {$\red 0$};
%	\node at (0,0) {$\red 1$};\node at (30:2*\hh+1) {$\red 2$};\node at (30:\hh+0.5) {$\red 5$};
%	\node at (0,-\hh-0.5) {$\red 4$};\node at (1.5+\hh,1+2*\hh) {$\red 4$};
%	\node at (-1.2,0.65) {$\red 3$};\node at (2+\hh+0.66,0.65) {$\red 3$};
\end{tikzpicture}
%&
%\begin{tikzpicture}[bend angle=120]
%\node[name=s,regular polygon, regular polygon sides=5, minimum size=3cm] at (0,0) {}; 
%\node (1) at (0,0) {$\red 0$};
%\node (6) at (s.corner 1)  {$\red 5$};
%\node (2) at (s.corner 2)  {$\red 1$};
%\node (4) at (s.corner 3)  {$\red 3$};
%\node (5) at (s.corner 4)  {$\red 4$};
%\node (3) at (s.corner 5)  {$\red 2$};
% x arrows
%\draw[->]  (1)+(105:6.5pt) -- ($(6)+(-105:6.5pt)$);
%\draw[->]  (1)+(75:6.5pt) -- ($(6)+(-75:6.5pt)$);
%\draw[->]  (1)+(219:6.5pt) --   ($(4)+(70:6.5pt)$);
%\draw[->]  (1)+(249:6.5pt) --   ($(4)+(40:6.5pt)$);
%\draw[->]  (1)+(291:6.5pt) -- ($(5)+(140:6.5pt)$);
%\draw[->]  (1)+(319:6.5pt) --   ($(5)+(110:6.5pt)$);
%
%\draw[->]  (2)+(0:6.5pt) -- ($(1)+(140:6.5pt)$);
%\draw[->]  (2)+(-20:6.5pt) --   ($(1)+(160:6.5pt)$);
%\draw[->]  (2)+(-40:6.5pt) --   ($(1)+(180:6.5pt)$);
%\draw[->]  (3)+(180:6.5pt) -- ($(1)+(40:6.5pt)$);
%\draw[->]  (3)+(200:6.5pt) --   ($(1)+(20:6.5pt)$);
%\draw[->]  (3)+(220:6.5pt) --   ($(1)+(00:6.5pt)$);
%
%\draw[->]  (6) -- (2);
%\draw[->]  (4) -- (2);
%\draw[->]  (6) -- (3);
%\draw[->]  (5) -- (3);
%
%\draw[->] (4) .. controls (1,-2) and (2,-2) .. (3);
%\draw[->] (5) .. controls (-1,-2) and (-2,-2) .. (2);
%\end{tikzpicture}

\end{array}
\]
\caption{The hexagon $\Delta_1$ with one interior lattice point and the dimer model $G_1$.}
\label{fg:Hexagon}
\end{figure}

\subsection{Reflection groups of order two}
 \label{sc:reflection}

In the case of reflection groups,
we take the square lattice dimer model $\Gtilde$
whose characteristic polygon $\Deltatilde$ is a rectangle
as $\Gtilde$.
%(such a dimer model is well-known and can be found in \cite[Figure 4]{Hanany-Vegh}).
%Notice both $R_1$ and $R_2$ act on $\Gtilde_1$ although the associate action
%of $R_1$ on $\Deltatilde_1$ does not have a fixed lattice point.

\subsubsection{The group $R_1$}

For an $R_1$-invariant lattice polygon $\Delta$,
let $\Deltatilde$ be the minimum rectangle containing $\Delta$,
whose sides are parallel to $(1,0)$ or $(0,1)$,
i.e., two of whose sides are parallel to $(1,0)$,
and the other two are parallel to $(0,1)$.
%Then $\Deltatilde$ is a lattice rectangle and
%there is an integer matrix $P$ which sends $\Deltatilde_1$ to $\Deltatilde$
%and therefore we obtain a consistent dimer model $\Gtilde$ with $R_1$ action
%corresponding to $\Deltatilde$.
Since each side of $\Deltatilde$ contains a lattice point of $\Delta$,
we can start from the square lattice dimer model $\Gtilde$ and
iterate the operations in Proposition \ref{pr:chop0}
to obtain a consistent symmetric dimer model $G$
with characteristic polygon $\Delta$.

\subsubsection{The group $R_2$}

In this case,
we consider a rectangle containing $\Delta$,
two of whose sides are parallel either to $(1,1)$ or $(1, -1)$.
Note that if we require that each of the four sides meet $\Delta$,
then the rectangle may not be a lattice rectangle,
i.e.,
it may not have lattice points as its corners.
In general, there may be two minimal such lattice rectangles containing $\Delta$.
We choose $\Deltatilde$
such that $\partial \Deltatilde$ contains
$\partial \Delta \cap \bZ(1,1)$ (if this is non-empty).
Then we can again iterate the operations in Proposition \ref{pr:chop0} to obtain a dimer model $G$
corresponding to $\Delta$.

\subsection{Dihedral groups}
 \label{sc:dihedral}

\subsubsection{The group $D_4^1$}

For a lattice polygon $\Delta$ symmetric under the $D_4^1$-action,
let $\Deltatilde$ be the minimum rectangle containing $\Delta$
whose sides are parallel to $(1,0)$ or $(0,1)$.
Then starting from $\Deltatilde$,
we can iterate the operations in Proposition \ref{pr:chop0}
to obtain a consistent symmetric dimer model $G$
with characteristic polygon $\Delta$.

\subsubsection{The group $D_4^2$}
Consider the action of $D_4^2$ on $\bR^2$ and
let $L_1= \bR(1,1)$ and $L_2=\bR(1,-1)$ be the
lines of reflections.
Then $D_4^2$ acts freely on $\bR^2 \setminus (L_1 \cup L_2)$.
We use rectangles as in the $R_2$ case.

%\begin{lemma}
%Let $\Deltatilde$ be a $D_4^2$-invariant lattice rectangle whose sides are parallel
%to $L_1$ or $L_2$.
%Then there exists a consistent dimer model with $D_4^2$-action
%which has a fixed face.
%\end{lemma}
%\begin{proof}
%Such a rectangle is a linear transform of the lattice square whose lattice points
%are the corners and its center of gravity, i.e., $\Delta_0$ in \pref{fg:Rotation4}, 
%which corresponds to the dimer
%model $\Gmin$ in \pref{fg:Rotation4}.
%Hence a corresponding dimer model is obtained from $\Gmin$ by
%enlarging the fundamental region of the torus as in \cite{Ueda-Yamazaki_NBTMQ}.
%\end{proof}

\begin{lemma}
Let $\Deltatilde$ be a $D_4^2$-invariant lattice rectangle whose sides are parallel
to $L_1$ or $L_2$.
Then the number of lattice points on  $\partial \Deltatilde \cap (L_1 \cup L_2)$ is
either $0$ or $4$.
\end{lemma}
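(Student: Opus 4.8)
The plan is to exploit the rigidity forced by $D_4^2$-invariance and then reduce the count to a single parity statement. First I would note that $D_4^2$ contains $-1$, whose only fixed point is the origin, so any $D_4^2$-invariant rectangle is centered at the origin; moreover $L_1$ and $L_2$ pass through the origin and are mutually perpendicular, matching the two perpendicular edge directions of a rectangle with sides parallel to $L_1$ or $L_2$. Introducing the lattice vectors $u=(1,1)$ and $v=(1,-1)$, which generate the index-$2$ sublattice $\{(x,y)\in\bZ^2 : x\equiv y \bmod 2\}$, I would write the four corners of $\Deltatilde$ as $\pm s_0 u \pm t_0 v$ with independent signs, for some real $s_0, t_0 > 0$.

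The essential point is the arithmetic constraint imposed by requiring the corners to be lattice points. A point $su+tv=(s+t,\,s-t)$ lies in $\bZ^2$ exactly when $s+t$ and $s-t$ are integers, i.e.\ when $s$ and $t$ are either both integers or both half-integers. Applying this to the corner $s_0 u + t_0 v$, I obtain the dichotomy that $s_0$ and $t_0$ lie simultaneously in $\bZ$ or simultaneously in $\tfrac12+\bZ$. This dichotomy is the crux of the argument, and it is the only place that requires genuine care; everything else is bookkeeping.

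Next I would identify $(L_1\cup L_2)\cap\partial\Deltatilde$ explicitly. In the $(u,v)$-frame the rectangle is $\{|s|\le s_0,\ |t|\le t_0\}$, with $L_1=\{t=0\}$ and $L_2=\{s=0\}$. Since the rectangle has positive width in each direction, $L_1$ meets the boundary only at the midpoints $\pm s_0 u$ of the two sides parallel to $L_2$, and $L_2$ meets it only at the midpoints $\pm t_0 v$ of the two sides parallel to $L_1$; in particular $L_1\cap L_2=\{0\}$ is interior, so the intersection consists of exactly these four distinct side-midpoints.

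Finally I would read off the count. The midpoints $\pm s_0 u = \pm(s_0,s_0)$ are lattice points if and only if $s_0\in\bZ$, and $\pm t_0 v = \pm(t_0,-t_0)$ are lattice points if and only if $t_0\in\bZ$. By the dichotomy, $s_0\in\bZ \iff t_0\in\bZ$: if both hold, all four midpoints are lattice points, giving $4$; if neither holds, none of them are, giving $0$. No intermediate value is possible, which is precisely the assertion.
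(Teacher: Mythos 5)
Your proof is correct and rests on the same key observation as the paper's: the four corners of $\Deltatilde$ are $\pm v_1\pm v_2$ (in your notation $\pm s_0u\pm t_0v$), and their being lattice points forces the two side-midpoints on $L_1$ and the two on $L_2$ to be simultaneously lattice or simultaneously non-lattice. The paper phrases this in one coordinate-free line, while you carry it out via the explicit parity dichotomy on $(s_0,t_0)$, but the argument is essentially identical.
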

\begin{proof}
Let $v_1$ and $v_2$ be points on $\partial \Deltatilde \cap L_1$ and $\partial \Deltatilde \cap L_2$
respectively.
Then one has $\partial \Deltatilde \cap (L_1 \cup L_2) = \{ \pm v_1, \pm v_2\}$, and
$\pm v_1 \pm v_2$ are the four corners of $\Deltatilde$, which are lattice points.
The assertion follows from this.
\end{proof}

Let $\Delta$ be a lattice polygon invariant under the $D_4^2$-action.
We embed $\Delta$ into an invariant lattice rectangle $\Deltatilde$
whose sides are parallel to $L_1$ or $L_2$.
We assume that all the lattice points on
$\partial \Delta \cap (L_1 \cup L_2)$ are on $\partial \Deltatilde$
and that $\Deltatilde$ is the minimum of the lattice rectangles satisfying this condition.
This means the following.
\begin{itemize}
\item
If $\#(\partial \Delta \cap (L_1 \cup L_2) \cap \bZ^2)=0$ or $4$, then
$\partial \Delta \cap (L_1 \cup L_2)=\partial \Deltatilde \cap (L_1 \cup L_2)$.
\item
Suppose $\partial \Delta \cap L_i \cap \bZ^2 \ne \emptyset$ and
$\partial \Delta \cap L_j \cap \bZ^2 = \emptyset$ for $\{i,j\}=\{1,2\}$.
Then $\partial \Deltatilde \cap L_i$ coincides with $\partial \Delta \cap L_i$,
while $\partial \Deltatilde \cap L_j$ consists of the lattice points
closest to $\partial \Delta \cap L_j$ outside of $\Delta$.
\end{itemize}
Then starting from $\Deltatilde$,
we can iterate the operations in Proposition \ref{pr:chop0}
to obtain a consistent symmetric dimer model $\Delta$.

\subsubsection{The group $D_6^1$}
Let $\Delta$ be a $D_6^1$-invariant lattice polygon $\Delta$.
As in Section \ref{sc:C3},
take the minimum integer $n$ such that $\Delta \subset \Deltatilde_n$,
where $\Deltatilde_n$ is the convex hull of $(n, -n)$, $(n, 2n)$ and $(-2n, -n)$.
In this case, we cannot obtain $\Delta$ from $\Deltatilde_n$
by iteration of chopping corners
satisfying the conditions in Proposition \ref{pr:chop0}
if at some step the corner is on a primitive side segment intersecting a line of reflection. 
Thus before applying Proposition \ref{pr:chop0}, we first cut off regular triangles at the corners of $\Deltatilde_n$;
let $\Deltatilde$ be the minimum hexagon containing $\Delta$ obtained by cutting off three corner regular triangles from $\Deltatilde_n$ (when $\Delta$ is a triangle, we obtain $\Delta$ itself instead of a hexagon but in this case, there is nothing to do).
We apply Proposition \ref{pr:Gulotta} simultaneously to the three corners of $\Deltatilde_n$
by symmetrically choosing the zigzag paths in Proposition \ref{pr:Gulotta}.
This operation produces a symmetric consistent dimer model $\Gtilde$
whose characteristic polygon is $\Deltatilde$.
To obtain $\Delta$ from $\Deltatilde$, 
notice that the minimality of the hexagon $\Deltatilde$ ensures
that $\partial\Deltatilde$ contains all the points of $\partial \Delta$ that are on the lines of reflections.
%that $\Delta$
%contains the middle points of the sides of $\Deltatilde$
%that are on the lines of reflections.
Therefore,
for any corner $\frakc$ of $\Deltatilde$ which is not on $\Delta$,
$g\frakc$ and $\frakc$ are not connected by a primitive line segment of $\Deltatilde$ for a non-trivial $g \in D^1_6$.
Thus we can iterate the operations in Proposition \ref{pr:chop0} to obtain a consistent symmetric dimer model corresponding to $\Delta$.

\subsubsection{The group $D_6^2$}
We fix a $D_6^2$-invariant metric on $\bR^2$ such that $(1,0)$ is of length $1$.
This means that we consider the inner product defined by
$$
\langle (x_1, y_1), (x_2, y_2) \rangle = \begin{pmatrix} x_1 & y_1 \end{pmatrix} \begin{pmatrix} 1 & -\frac{1}{2} \\ -\frac{1}{2} & 1 \end{pmatrix}
\begin{pmatrix} x_2 \\ y_2 \end{pmatrix}.
$$
For example, $(1,0)$ is perpendicular to $(1, 2)$.

In this case, the lines of reflections are
$L_1\coloneqq \bR(1,0)$, $L_2\coloneqq\bR(1,1)$ and $L_3\coloneqq\bR(0,1)$.
Let $\Deltatilde_n$ be the lattice hexagon whose corners are
$(n, 0)$, $(n,n)$, $(0, n)$, $(-n, 0)$ $(-n,-n)$ and $(0,-n)$,
which is a regular hexagon of side $n$.
Then $\Deltatilde_1$ is in Figure \ref{fg:Hexagon} and thus
 a consistent dimer model $\Gtilde_n$ corresponding to $\Deltatilde_n$ with $D_6^2$-action
 is obtained by
 applying Lemma \ref{lm:UY} to the one in Figure \ref{fg:Hexagon}.

For a given lattice polygon $\Delta$ with $D_6^2$-action,
embed $\Delta$ into $\Deltatilde_n$ with the minimum value of $n$.
This means $\partial \Delta \cap \partial \Deltatilde_n \neq \emptyset$.
We first cut off isosceles triangles from $\Deltatilde_n$ as follows.
Let $k$ and $l$ be the maximum integers satisfying $(k,0) \in \Delta$ and $(l,l) \in \Delta$
respectively.
Notice that $(k,0)$ is on $L_1$  and $(l,l)$ is on $L_2$.
Let $\Deltatilde$ be the convex lattice polygon obtained by cutting off corner triangles of $\Deltatilde_n$
by the following six lines:
\begin{itemize}
\item
the lines passing through $(k,0)$ or $(-l,0)$ and  perpendicular to $L_1$,
\item
the lines passing through $(l,l)$ or $(-k,-k)$ and perpendicular to $L_2$,
\item
the lines passing through $(0,k)$ or $(0,-l)$ and perpendicular to $L_3$. 
\end{itemize}
Since $\Delta$ is convex and invariant by $D_6^2$, it is contained in $\Deltatilde$.
Moreover, $\Delta$ contains the intersections of the lines of reflections with $\partial \Deltatilde$.
%Then the assumption that $\partial \Deltatilde_n \cap \Delta \ne \emptyset$
%implies $2k+2l \ge 3n$.
By applying Proposition \ref{pr:Gulotta} at the six corners in a symmetric way,
we obtain a symmetric consistent dimer model with $D_6^2$-action and a fixed face
corresponding to $\Deltatilde$.
To obtain $\Delta$ from $\Deltatilde$, we iterate the operation of chopping corners in a $D_6^2$-orbit.
In this process, by our choice of $\Deltatilde$, a corner $\frakc$ and $g \frakc$ are not adjacent to each other for a non-trivial $g \in D_6^2$.
Therefore we can apply Proposition \ref{pr:chop0} in each step.
Thus there is a consistent dimer model with $D_6^2$-action and a fixed face whose characteristic polygon is $\Delta$.

\subsubsection{The group $D_8$}
Let $G_n$ be the dimer model
which corresponds to the square $\Delta_n$
as in the $C_4$ case.
Then we have an action of $D_8$ on $G_n$.
Take the smallest $\Delta_n$ containing $\Delta$
and cut off four isosceles triangles from
the corners such that 
\begin{itemize}
\item
the resulting polygon (octagon in general)
$\Deltatilde$ contains $\Delta$ and
\item
$\Deltatilde$ is the minimum of such polygons.
\end{itemize}
Then again by applying Proposition \ref{pr:Gulotta} to the four corners of $\Delta_n$
in a symmetric way, we obtain a symmetric consistent dimer model $\Gtilde$
with characteristic polygon $\Deltatilde$. 
Now $\Delta$ can be obtained from $\Deltatilde$
by iteration of chopping corners as in Proposition \ref{pr:chop0}.
Thus we obtain a desired dimer model corresponding to $\Delta$.

\subsubsection{The group $D_{12}$}
In this case let $\Deltatilde$ be the minimum polygon obtained by cutting corner triangles of
the hexagon $\Deltatilde_n$ exactly as in the $D_6^2$ case.
(Notice that we have $k=l$ in the $D_{12}$ case.)
Then the same argument as in the $D_6^2$ case proves the existence of a consistent dimer model with $D_{12}$-action
corresponding to $\Delta$.

\section{Non-commutative crepant resolutions}
 \label{sc:NCCR}

%We prove \pref{th:NCCR} in this section.
%Assume that from every $H$-invariant $\Delta$ we can construct a consistent $H$-invariant dimer model $G_\Delta$ in every case. 
Let $G$ be a consistent dimer model
with characteristic polygon $\Delta$
and
$\Gamma$ be the corresponding quiver with relations.
As we recalled in Section \ref{sc:moduli},
the moduli space $\cM_\theta$
of stable representations of $\Gamma$
with respect to a generic stability parameter $\theta$
is a crepant resolution
$
 \tau \colon \cM_\theta \to X_\Delta
$
of the Gorenstein affine toric variety
$
 X_\Delta = \Spec R,
$
and the tautological bundle
$
 \cE \coloneqq \bigoplus_{v \in Q_0} \cL_v
$
is a tilting bundle
such that
$
 \End(\cE) \cong \bC\Gamma.
$
Fix a vertex $v_0 \in Q_0$.
By replacing $\cE$ with $\cE \otimes \cL_{v_0}^{-1}$ if necessary,
we may assume
$
 \cL_{v_0}\cong\cO_{\cM_\theta}.
$
Then \cite[Proposition A.2]{Toda-Uehara} shows
that
$\End (\cE)$
is isomorphic to the endomorphism algebra
$
 \End_R(E)
%  \cong \bC\Gamma
$
of the $R$-module
$
 E
  \coloneqq \tau_* \cE
  \cong H^0(\cE),
$
and that
$\End_R(E)$ is a non-commutative crepant resolution of $R$
in the sense of \cite{MR2077594}.

Let $G$ be a dimer model
which is symmetric with respect to the action of a finite group $H$
in the sense of Definition \ref{df:action}.
Let $v_0$ be the vertex fixed by the action of $H$,
which exists by Assumption \ref{as:vertex},
and $\theta$ be a $v_0$-generated stability parameter.

%so that $H$ acts on $\cM_\theta$.

\begin{lemma}
There is an action of $H$ on $\cE$ which is compatible with the action $\nu$ on $\cM_\theta$.
Therefore $E$ is an $H$-equivariant sheaf on $\Spec R$.
\end{lemma}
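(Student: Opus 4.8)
The plan is to realize the $H$-action on $\cE$ as the one induced by the twisted $H$-action on the quiver $\Gamma$, transported to $\cM_\theta$ regarded as a moduli space of representations, and then to push it forward along $\tau$. First I would recall that, since $G$ is symmetric, the twisted action of $H$ on $\bC\Gamma$ from \eqref{eq:twist_on_Gamma} produces, for each $h \in H$, an autoequivalence $h_\ast$ of the category of representations of $\Gamma$ with dimension vector $\bsone$: a representation $\Psi = ((V_v)_{v}, (\psi(a))_a)$ is sent to $h_\ast\Psi$, whose space at the vertex $v$ is $V_{h^{-1}v}$ and whose structure maps are obtained from the $\psi(a)$ by transport along $h$ together with the factor $\det(h)$ on the arrows of the $H$-invariant matching $D_0$. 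Since $D_0$ is preserved by $H$ and $\det$ is a homomorphism, $h \mapsto h_\ast$ is a strict group action. Because $v_0$ is fixed by $H$ and $\theta$ is the $H$-invariant $v_0$-generated parameter, $h_\ast$ preserves $\theta$-stability, so $H$ acts on $\cM_\theta$; comparing with the induced action on the centre $R \cong e_{v_0}\bC\Gamma e_{v_0}$ shows that this action is exactly the twisted action $\nu$, and in particular $\tau\colon \cM_\theta \to \Spec R$ is $H$-equivariant.

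Next I would build the equivariant structure on $\cE$ from the universal representation $\mathcal{U}$ on $\cM_\theta$. Its components are precisely the tautological line bundles $\cL_v$, and the normalisation $\cL_{v_0}\cong\cO_{\cM_\theta}$ rigidifies $\mathcal{U}$, killing the scalar ambiguity coming from the automorphisms of stable representations. Since $\nu(h)$ is by construction the map sending $[\Psi]$ to $[h_\ast\Psi]$, evaluating fibrewise gives
\[
 (\cL_v)_{[h_\ast\Psi]} = (h_\ast\Psi)_v = V_{h^{-1}v} = (\cL_{h^{-1}v})_{[\Psi]},
\]
that is, canonical isomorphisms $\phi_h\colon \nu(h)^\ast\cL_v \cong \cL_{h^{-1}v}$. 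As $h$ permutes $Q_0$, summing over $v \in Q_0$ yields an isomorphism $\nu(h)^\ast\cE\cong\cE$.

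Finally I would verify the cocycle condition. Because $h \mapsto h_\ast$ is a genuine group action on representations, the isomorphisms $\phi_h$ compose correctly, $\phi_{gh}=\phi_h\circ\nu(h)^\ast\phi_g$; and since $v_0$ is $H$-fixed, the chosen rigidification $\cL_{v_0}\cong\cO_{\cM_\theta}$ is preserved by the action, so no projective scalar survives and the $\phi_h$ form an honest equivariant structure on $\cE$ compatible with $\nu$. Pushing forward along the $H$-equivariant morphism $\tau$, the $R$-module $E=\tau_\ast\cE$ acquires an $H$-equivariant structure on $\Spec R$, as claimed.

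The step I expect to be the main obstacle is the identification of this moduli-theoretic $H$-action with $\nu$ rather than with the untwisted $\mu$: this is where the factor $\det(h)$ on $D_0$ is indispensable, and verifying it requires tracking that factor through the one-parameter subgroup $\lambda_0 = \iota_{D_0}$ and the splitting $\bT \cong H^1(T,\bCx)\times\bCx$, so that on the dense torus the map $[\Psi]\mapsto[h_\ast\Psi]$ coincides with $\lambda_0(\det h)\cdot\mu(h,-)$. Once that comparison is in place, the fibrewise isomorphisms, the cocycle identity, and the pushforward are all formal.
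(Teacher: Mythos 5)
Your argument is correct and is essentially the paper's own proof in different clothing: the paper lifts the twisted action to the atlas $\scN_\theta$ and the trivial bundle $\bigoplus_v V_v \otimes \cO_{\scN_\theta}$ and descends along the quotient by $\Aut'$ (whose rigidification $g_{v_0}=1$ plays exactly the role of your normalisation $\cL_{v_0}\cong\cO_{\cM_\theta}$), whereas you phrase the same construction via the universal representation on the fine moduli space. The one point you flag as the main obstacle --- identifying the moduli-theoretic action with $\nu$ rather than $\mu$ via $\lambda_0=\iota_{D_0}$ --- is likewise only asserted, not spelled out, in the paper's proof, so nothing is missing relative to it.
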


\begin{proof}
As in \cite[\S 2.1]{MR2078369},
the moduli space $\cM_\theta$ is constructed as a quotient of the scheme 
\[
\scN_\theta \subset \prod_{a \in Q_1} \Hom_{\bC}(V_{s(a)}, V_{t(a)})
\]
parametrizing $\theta$-stable representations of $\Gamma$ in vector spaces $V_v=\bC$ for $v \in V$ by
the action of the group
\[
\Aut'((V_v)_{v \in Q_0})\coloneqq \left\{\left.(g_v)_{v \in Q_0} \in \prod_{v \in Q_0} \GL(V_v) \,\right|\, g_{v_0}=1 \right\}
\cong \prod_{v \in Q_0 \setminus \{v_0\}} \GL(V_v).
\]
This group $\Aut'((V_v)_{v \in Q_0})$ acts on the locally free sheaf
$\cEtilde\coloneqq\bigoplus_v V_v \otimes \cO_{\scN_\theta}$ on $\scN_\theta$
and $\cEtilde$ descends to the tautological bundle $\cE$ on $\cM_\theta$.
On the other hand,  we can define an action $\tilde{\nu}$ of $H$ on $\scN_\theta$ by changing the sign in the natural action
as in Section \ref{sc:action} which is compatible with the action $\nu$ on $\cM_\theta$.
We can also let $H$ act on the group $\Aut'((V_v)_{v \in Q_0})$ by
\[
(h, (g_v)_{v \in Q_0}) \mapsto (g_{(h^{-1}(v))})_{v \in Q_0}
\]
and on $\cEtilde$ by
\[
\left(h, \,\bigoplus_v w_v \otimes f_v\right) \mapsto \bigoplus_v w_{h^{-1}(v)} \otimes \tilde{\nu}(h, f_{h^{-1}(v)}).
\]
Thus the semidirect product
$H \ltimes (\Aut'((V_v)_{v \in Q_0}))$ acts on $\bigoplus_v V_v \otimes \cO_{\scN_\theta}$
which descends to an action of $H$  on $\cE$.
\end{proof}

Let us now give the proof of Theorem \ref{th:NCCR}. We first consider $H\ltimes\bC \Gamma$
by using the action of $H$ on $\bC \Gamma$.
In what follows we prove that $H\ltimes\bC \Gamma\cong H\ltimes\End_R(E)$ is a NCCR of $R^H$. 
According to \cite{MR2077594}, it is sufficient to show the following:
\begin{itemize}
\item $H\ltimes\bC\Gamma\cong\End_{R^H}(E)$,
\item $E$ is a reflexive $R^H$-module,
\item $H\ltimes\bC\Gamma$ is Cohen--Macaulay,
\item $H\ltimes\bC\Gamma$ has finite global dimension. 
\end{itemize}

Since $\bC\Gamma$ is Cohen--Macaulay over $R$, the crossed product $H\ltimes\bC\Gamma$ is also Cohen--Macaulay over $R$. 
Thus $H\ltimes\bC\Gamma$ is Cohen--Macaulay over $R^H$.
Similarly, $E$ is reflexive over $R$ and hence reflexive over $R^H$. 
Moreover, since $\bC\Gamma$ has finite global dimension, $H\ltimes\bC\Gamma$ has also finite global dimension. 
It is remaining to prove that $H\ltimes\bC\Gamma\cong\End_{R^H}(E)$.
% where $E$ is Cohen--Macaulay over $R^H$. 
%The last assertion is immediate since $E$ is Cohen--Macaulay over $R$, so it is remaining to prove the isomorphism. 
%We divide the proof into 4 steps.

Notice that $\bC\Gamma\cong\End_R(E)\subseteq\End_{R^H}(E)$ and the action of $H$ on $E$ induces a monoid homomorphism $H\to\End_{R^H}(E)$. Therefore, there exists an algebra homomorphism 
\[
\scF:H\ltimes\End_R(E)\to\End_{R^H}(E).
\]

Recall that both $H\ltimes\End_R(E)$ and $\End_{R^H}(E)$ are reflexive $R^H$-modules.
Therefore it suffices to prove that $\scF$ is an isomorphism over some open subset $U\subset\Spec R^H$
with $\codim (\Spec R^H\backslash U)\geq2$.
To choose this open set,
let $\widetilde{U}$ be the smooth and $H$-free locus in $\Spec R$ and define $U\coloneqq\widetilde{U}/H$. 
The isomorphism $K_{\cM_\theta} \cong \cO_{\cM_\theta}$ in $\coh^H(\cM_\theta)$
implies that $\codim (\Spec R^H\backslash U) \ge 2$.

%\begin{center}4
%\begin{pspicture}(0,0)(2,2.25)
%	\psset{nodesep=3pt}
%	\rput(0,2){$H\cdot\widetilde{P}\subset\widetilde{U}\subset\Spec R$}
%	\rput(0.45,0){$P\in U\subset\Spec R^H$}
%	\psline{->}(1.25,1.7)(1.25,0.3)
%	\psline{->}(-0.25,1.7)(-0.25,0.3)
%	\rput(1.5,1){$\pi$}
%\end{pspicture}
%\end{center}
%\[
%\begin{tikzcd}
% H \cdot \widetilde{P} \arrow[r,phantom,"\subset"] &[-8mm] \widetilde{U} \arrow[r,hookrightarrow] \arrow[d] & \Spec R \arrow[d,"\pi"]\\
% p \arrow[r,phantom,"\in"] & U \arrow[r,hookrightarrow] & \Spec R^H
%\end{tikzcd}
%\]

We show that for every point $P\in U$,
the fibre of $\scF$ over $P$ is an isomorphism. 
Since the restriction of $\tau$ to $\tau^{-1}(\Utilde)$ is an isomorphism, the sheaf $E|_{\widetilde{U}}$ is locally free.
Moreover, since $\pi^{-1}(P)$ is a free $H$-orbit, we have $\End_R(E)|_P \cong \bigoplus_{Q \in \pi^{-1}(P)}\End_\bC(E|_{Q})$.
%
%for any point $\widetilde{p}\in\widetilde{U}$ we have that $E|_{G\cdot\widetilde{p}}\cong\bigoplus_{v\in Q_0}\cL_v|_{G\cdot\widetilde{p}}$. Then
%
%\begin{align*}
%\dim\End_R(E)|_{G\cdot\widetilde{p}} &= \dim\End_{\bC[G\cdot\widetilde{p}]}(E|_{G\cdot\widetilde{p}}) = \dim\bigoplus_{g\in G}\End_\bC(E|_{g\cdot\widetilde{p}})=|G||Q_0|^2 \\
%\dim\End_{R^G}(E)|_p &= \dim\End_\bC((\pi_*E)|_p) = \dim\End_\bC(\bigoplus_{g\in G}E|_{g\cdot\widetilde{p}}) = |G|^2|Q_0|^2
%\end{align*}
%which means that $\dim(G\ast\End_{R}(E)|_{G\cdot\widetilde{p}}) = \dim\End_{R^G}(E)|_p$. Then to show that $G\ast\End_R(E|_{G\cdot\widetilde{p}})\cong\End_\bC(\pi_*E|_p)$, it is enough to prove that the following map is surjective
%
Thus the problem is reduced to showing that the map
\[
\scF|_{P}:H\ltimes\left(\bigoplus_{Q \in \pi^{-1}(P)}\End_\bC(E|_{Q})\right)
\to \End_\bC\left(\bigoplus_{Q \in \pi^{-1}(P)}E|_{Q}\right)
\]
is an isomorphism of vector spaces.
The left hand side, as a vector space, decomposes as
\[
H\ltimes\left(\bigoplus_{Q \in \pi^{-1}(P)}\End_\bC(E|_{Q})\right)
=\bigoplus_{g \in H, \, Q \in \pi^{-1}(P)} g \ltimes \End_\bC(E|_{Q}),
\]
and $\scF|_{P}$ sends the direct summand $g \ltimes \End_\bC(E|_{Q})$ isomorphically onto
$\Hom_\bC(E|_{Q}, E|_{gQ})$.
Since $\pi^{-1}(P)$ is a free $H$-orbit, $\scF|_P$ is an isomorphism.
This concludes the proof of Theorem \ref{th:NCCR}.

\begin{remark}\label{rem:fixed_point}
Assumption \ref{as:polygon} is used only when $H$ does not preserve the orientation of $T$.
In fact, when $H$ preserves the orientation of $T$, 
Theorem \ref{th:NCCR} holds under only Assumption \ref{as:vertex}.
\end{remark}

\begin{example}
Let $G$ be the $R_2$-symmetric consistent dimer model and $\Delta$ its $R_2$-symmetric characteristic polygon shown in Figure \ref{fg:pentagon_dimer}.
Let $X_{2k \Delta}$ be the affine toric variety
associated with the polygon
$
2 k \Delta \coloneqq \lc 2 k n \in N_\bR \relmid n \in \Delta \rc
$
obtained by multiplying $\Delta$ by $2 k$ for $k \ge 2$
(or
the same polygon $\Delta$
regarded as a lattice polygon
with respect to the over-lattice
$
\frac{1}{2 k} N \coloneqq \lc n \in N_\bR \relmid 2 k n \in N \rc
$
of index $(2k)^2$).
Let $P_0, \ldots, P_4$
be the corners of $2 k \Delta$
such that the reflection fixes $P_0$ and
interchanges $P_1$ (resp.~$P_2$) with $P_4$ (resp.~$P_3$).
The affine toric variety $X_{2k \Delta}$ has
a family of $A_{2k-1}$-singularities
along the torus-invariant curve
associated with 
the $R_2$-invariant face of $2 k \Delta$
(i.e., the edge connecting $P_2$ and $P_3$).
We choose the midpoint of $P_2$ and $P_3$ as the origin which defines the twisted action
of $R_2$ on $X_{2k \Delta}$ as in \eqref{eq:twist_on_X}.
Then the quotient of the family of $A_{2k-1}$-singularities
gives
a family of $D_{k+3}$-singularity along a curve
on $X_{2k \Delta}/R_2$ (see Remark \ref{rem:choice}).
The existence of a family of $D_{k+3}$-singularities along a curve
implies that the affine variety
$X_{2k \Delta}/R_2$
is not a toric variety
(if $X_{2k \Delta}/R_2$ is a toric variety,
then the curve along which the variety has $D_{k+3}$-singularities
must be torus-invariant,
so that the affine toric variety
associated with the 2-dimensional cone
corresponding to that curve
must have a $D_{k+3}$-singularity,
which is impossible).
To prove that (the origin of)
$X_{2k \Delta}/R_2$
is not a quotient singularity,
we show that
$X_{2k \Delta}/R_2$
is not $\bQ$-factorial.
To prove that a variety is not $\bQ$-factorial,
it suffices to find a pair of Weil divisors
intersecting in codimension at least three.
For each $0 \le i \le 4$,
let $D_i$ be the divisor of $X_{2 k \Delta}$
associated with $P_i$.
Then the divisors $D_0$ and $D_2 + D_3$ descends to
divisors on
$X_{2 k \Delta}/R_2$
intersecting in codimension three.
Hence
the symmetric dimer model
obtained from $G$ by Lemma \ref{lm:UY}
produces a non-commutative crepant resolution
of
$X_{2 k \Delta}/R_2$
which is neither a toric variety nor a quotient singularity.

\begin{figure}[h]
\[
\begin{array}{cc}
\begin{tikzpicture}[scale=0.75]
	\draw[fill=black] (0,0) circle  (1mm);
	\draw[fill=black] (1,0) circle   (1mm);
	\draw[fill=black] (0,1) circle   (1mm);
	\draw[fill=black] (1,1) circle   (1mm);
	\draw[fill=black] (1,2) circle   (1mm);
	\draw[fill=black] (2,1) circle   (1mm);	

	\node at (-0.25,-0.4) {$P_0$};
	\node at (1.25,-0.4) {$P_1$};
	\node at (-0.5,1) {$P_4$};
	\node at (1,2.4) {$P_3$};
	\node at (2.5,1) {$P_2$};	

	\draw[thick,-] (0,0) -- (1,0) -- (2,1) -- (1,2) -- (0,1) -- cycle;
	\node at (0,-0.5) {};	
\end{tikzpicture}
~~~~~~~~~~~~~~~~&
\begin{tikzpicture}[scale=1.2,bend angle=45, looseness=1]

	\def\h{0.9238} %height of the octagon
	\def\l{0.76537} %length of side of the octagon = side of square
	\def\c{0.5412} %length of side of the octagon = side of square

 	\draw[thick] (3/2*\h,-\l/2) -- (\h,-\l/2) -- (-\l/2,\h) -- (-\l/2,3/2*\h);
 	\draw[thick] (\l/2,3/2*\h) -- (\l/2,\h) -- (\h,\l/2) -- (3/2*\h,\l/2);
	\draw[thick] (-3/2*\h,\l/2) -- (-\h,\l/2) -- (-\h,-\l/2) -- (-3/2*\h,-\l/2);
	\draw[thick] (-\l/2,-3/2*\h) -- (-\l/2,-\h) -- (\l/2,-\h) -- (\l/2,-3/2*\h);
		
		\octopo{((0,0)}{(0,0)}
		
	\draw[color=gray] (-3/2*\h,-3/2*\h) -- (3/2*\h,-3/2*\h) -- (3/2*\h,3/2*\h) --  (-3/2*\h,3/2*\h) -- cycle;		
%	\node at (\h+\l/2,0) {\gray 3};
%	\node at (0,-\h-\l/2) {\gray 2};
%	\node at (\h+\l/2,-\h-\l/2) {\gray 0};
\end{tikzpicture}
\end{array}
\]
\caption{An $R_2$-symmetric dimer model with a pentagon as characteristic polygon.}
\label{fg:pentagon_dimer}
\end{figure}

\end{example}

\section{Wallpaper groups}
 \label{sc:wallpaper}

If a dimer model $G$
on the real 2-torus $M_\bR/M$
is symmetric
under the action of a finite subgroup $H$ of $\GL(M)\ltimes (M_\bR/M)$,
then we can think of the quotient graph $G/H$
on the 2-dimensional orbifold
$M_\bR/(H \ltimes M)$.
If $H$ contains a reflection or a glide reflection,
then the graph $G/H$ is no longer bicolored
and hence not a dimer model,
but the associated quiver with relation still makes sense
and can be drawn
on the orbifold $(M_\bR/M)/H$.
Dimer models and quivers on orbifolds are also discussed
by Bocklandt \cite{MR3010162}
under the name \emph{weighted quiver polyhedra},
which are different from the ones appearing in this paper
in that we allow reflections whereas he does not,
and that we allow orbifold points to lie on dimer edges and dimer faces
(i.e., quiver arrows and quiver vertices),
whereas orbifold points in his theory lie only on dimer nodes
(i.e., quiver faces).

A discrete subgroup $\mathsf{W}$ of the Euclidean group
$\operatorname{E}(2) = \operatorname{O}(2) \ltimes \bR^2$
containing two linearly independent translations
is called
a \emph{wallpaper group}
or a \emph{plane crystallographic group}.
Wallpaper groups are classified into 17 classes
by the diffeomorphism class of the orbifold quotient $\bR^2/\mathsf{W}$,
and described by the \emph{orbifold notation}
as in Table \ref{tb:orbifold_notation}
(cf.~e.g.~\cite{Conway}).

\begin{table}[h]
\centering
\begin{tabular}{cc}
\toprule
${\blue 0}$ & Translation of the fundamental domain \\
${\red \times}$ & Line of glide reflection \\
${\red \ast}$ & line of reflection symmetry\\
${\red n}$ after ${\red \ast}$ & Point passing $n$ lines of reflection symmetries \\
${\blue n}$ before ${\red \ast}$ & Center point of an order $n$ rotation symmetry \\
\bottomrule
\end{tabular}
\caption{The orbifold notation}
\label{tb:orbifold_notation}
\end{table}

When a dimer model on $M_\bR/M$ is symmetric
under the action of a finite subgroup $H$ of $\GL(M)\ltimes (M_\bR/M)$, %$\GL(N)$,
we can take a $H$-invariant metric on $M_\bR$,
so that the pull-back of the dimer model to the universal cover $M_\bR$
is invariant under a wallpaper group.
Conversely,
for each of 17 wallpaper groups,
one can ask if there is a \emph{consistent} dimer model
whose group of symmetries is given by that group.
The answer to this question is affirmative,
and we give an example of a consistent dimer model of each type
in Figure \ref{fg:dimer17} below.
Note that
the type of symmetry of a dimer model depends
not only on the isomorphism class of the underlying abstract graph,
or even the isotopy class of the embedding of the graph on the 2-torus,
but also on the isometry class of the embedding.
For example, in Figure \ref{fg:dimer17}, we see that ${\blue 442}$, ${\red\ast442}$, and ${\blue4}{\red\ast2}$ are isotopic, but have different symmetries. 
%Notice also that among the 17 dimer models shown in the figure the cases $\ast\ast$, $\ast2222$, $\ast333$, $\ast422$ and $\ast632$ do correspond to groups of symmetries in $\GL(2,\bZ)$. The rest of the cases either contain translations or glides, or have rotational symmetries with center not a vertex of the dimer model. 

\begin{figure}[h]
\[
\begin{array}{ccccc}

%%%%%%%%%%%%%%%%%%%%%%%%%
%       0
%%%%%%%%%%%%%%%%%%%%%%%%%
\begin{tikzpicture}[scale=0.45,bend angle=45, looseness=1]
	\draw (60:1) -- (\AA+60:\RootOfSeven);
	\draw (120:2) -- (180:2);
	\draw (0:1) -- (\AA:\RootOfSeven);
	\draw (240:1) -- (\AA+240:\RootOfSeven);
	% Center hexagon
	\hex{((0,0)}{(0,0)}
	% First ring of hexagons
	\foreach \a in {30,90,...,330} {\hex{(0,0)}{(\a:\RootOfThree)}}
\end{tikzpicture}

& 
%%%%%%%%%%%%%%%%%%%%%%%%%
%      ast ast
%%%%%%%%%%%%%%%%%%%%%%%%%

\begin{tikzpicture}[scale=0.5,bend angle=45, looseness=1]

	\def\h{0.9238} %height of the octagon
	\def\l{0.76537} %length of side of the octagon = side of square
	\def\c{0.5412} %length of side of the octagon = side of square

 	\draw  (\h,-\l/2) -- (-\l/2,\h);\draw  (3*\h+\l,-\l/2) -- (\h+\l+\c,\h);

		\oct{((0,0)}{(0,0)}
		\octop{((0,0)}{(22.5:1.8478)}
		\octop{((0,0)}{(-3*22.5:1.8478)}
		\oct{((0,0)}{(-22.5:1.8478+0.76539)}
		
		\Square{(0,0)}{(-45+26.565:2.9214)}
		\Square{(0,0)}{(-45-26.565:2.9214)}
		\Square{(0,0)}{(45:1.3065)}
		\Square{(0,0)}{(-135:1.3065)}		
\end{tikzpicture}

& 
%%%%%%%%%%%%%%%%%%%%%%%%%
%      xx
%%%%%%%%%%%%%%%%%%%%%%%%%
\begin{tikzpicture}[scale=0.45,bend angle=45, looseness=1]
	\draw (-\AA:\RootOfSeven) -- (0:1) -- (\AA:\RootOfSeven);
	\draw (240:2) -- (180:2) -- (120:2);
	% Center hexagon
	\hex{((0,0)}{(0,0)}
	% First ring of hexagons
	\foreach \a in {30,90,...,330} {\hex{(0,0)}{(\a:\RootOfThree)}}
%	\node at (0,-3.5) {$\times\times$};
\end{tikzpicture}

&
%%%%%%%%%%%%%%%%%%%%%%%%%
%      ast x
%%%%%%%%%%%%%%%%%%%%%%%%%
\begin{tikzpicture}[scale=0.5,bend angle=45, looseness=1]

	\def\h{0.9238} %height of the octagon
	\def\l{0.76537} %length of side of the octagon = side of square
	\def\c{0.5412} %length of side of the octagon = side of square

 %	\draw  (-3,-3) -- (3,-3) -- (3,3) -- (-3,3) -- cycle;
	\draw (-2.25,-1.25) -- (-2.25,1.75) -- (-0.75,1.75) -- (-0.75,-2) -- (0.75,-2) -- (0.75,1.75) -- (2.25,1.75) -- (2.25,-1.25) -- (0.75,-1.25); \draw (0.75,-0.5) -- (-0.75,-0.5); \draw (-0.75,-1.25) -- (-2.25,-1.25); \draw (-2.25,0.25) -- (-0.75,0.25); \draw (-0.75,1) -- (0.75,1); \draw (0.75,0.25) -- (2.25,0.25);
	
	\draw[fill=black] (-2.25,-1.25) circle (1.2mm);\draw[fill=black] (-2.25,0.25) circle (1.2mm);
	\draw[fill=black] (-2.25,1.75) circle (1.2mm);\draw[fill=black] (-0.75,1) circle (1.2mm);	\draw[fill=black] (-0.75,-0.5) circle (1.2mm);\draw[fill=black] (-0.75,-2) circle (1.2mm);	\draw[fill=black] (0.75,1.75) circle (1.2mm);\draw[fill=black] (0.75,0.25) circle (1.2mm);	\draw[fill=black] (0.75,-1.25) circle (1.2mm);\draw[fill=black] (2.25,1) circle (1.2mm);	\draw[fill=black] (2.25,-0.5) circle (1.2mm);

	\draw[fill=white] (-2.25,1) circle (1.2mm);\draw[fill=white] (-2.25,-0.5) circle (1.2mm);	\draw[fill=white] (-0.75,1.75) circle (1.2mm);\draw[fill=white] (-0.75,0.25) circle (1.2mm);	\draw[fill=white] (-0.75,-1.25) circle (1.2mm);\draw[fill=white] (0.75,1) circle (1.2mm);	\draw[fill=white] (0.75,-0.5) circle (1.2mm);\draw[fill=white] (0.75,-2) circle (1.2mm);	\draw[fill=white] (2.25,1.75) circle (1.2mm);\draw[fill=white] (2.25,0.25) circle (1.2mm);	\draw[fill=white] (2.25,-1.25) circle (1.2mm);

\end{tikzpicture}

& 

%%%%%%%%%%%%%%%%%%%%%%%%%
%      2222
%%%%%%%%%%%%%%%%%%%%%%%%%
\begin{tikzpicture}[scale=0.45,bend angle=45, looseness=1]

	\def\h{0.9238} %height of the octagon
	\def\l{0.76537} %length of side of the octagon = side of square
	\def\c{0.5412} %length of side of the octagon = side of square

% 	Horizontal lines
	\draw (0,0) -- (4.5,0) -- cycle; 
	\draw (0,1.5) -- (4.5,1.5) -- cycle; 	
	\draw (0.5,2.25) -- (5,2.25) -- cycle; 
	\draw (0.5,3.75) -- (5,3.75) -- cycle; 	
	\draw (1,4.5) -- (5.5,4.5) -- cycle; 
% 	Vertical lines
	\draw (0,0) -- (0,1.5) -- (0.5,2.25) -- (0.5,3.75) -- (1,4.5); 
	\draw (1.5,0) -- (1.5,1.5) -- (2,2.25) -- (2,3.75) -- (2.5,4.5); 
	\draw (2.25,0) -- (2.25,1.5) -- (2.75,2.25) -- (2.75,3.75) -- (3.25,4.5); 
	\draw (3.75,0) -- (3.75,1.5) -- (4.25,2.25) -- (4.25,3.75) -- (4.75,4.5); 
	\draw (4.5,0) -- (4.5,1.5) -- (5,2.25) -- (5,3.75) -- (5.5,4.5); 
%	black nodes
	\draw[fill=white] (0,0) circle (1.2mm);	\draw[fill=black] (0,1.5) circle (1.2mm);	\draw[fill=white] (0.5,2.25) circle (1.2mm);	\draw[fill=black] (0.5,3.75) circle (1.2mm);	\draw[fill=white] (1,4.5) circle (1.2mm);	\draw[fill=black] (1.5,0) circle (1.2mm);	\draw[fill=white] (1.5,1.5) circle (1.2mm);	\draw[fill=black] (2,2.25) circle (1.2mm);	\draw[fill=white] (2,3.75) circle (1.2mm);	\draw[fill=black] (2.5,4.5) circle (1.2mm); 
	\draw[fill=white] (2.25,0) circle (1.2mm);	\draw[fill=black] (2.25,1.5) circle (1.2mm);	\draw[fill=white] (2.75,2.25) circle (1.2mm);	\draw[fill=black] (2.75,3.75) circle (1.2mm);	\draw[fill=white] (3.25,4.5) circle (1.2mm); 	\draw[fill=black] (3.75,0) circle (1.2mm);	\draw[fill=white] (3.75,1.5) circle (1.2mm);	\draw[fill=black] (4.25,2.25) circle (1.2mm);	\draw[fill=white] (4.25,3.75) circle (1.2mm);	\draw[fill=black] (4.75,4.5) circle (1.2mm);	\draw[fill=white] (4.5,0) circle (1.2mm);	\draw[fill=black] (4.5,1.5) circle (1.2mm);	\draw[fill=white] (5,2.25) circle (1.2mm);	\draw[fill=black] (5,3.75) circle (1.2mm);	\draw[fill=white] (5.5,4.5) circle (1.2mm);

\end{tikzpicture}

\\
{\blue 0} & {\red \ast\ast} & {\red \times\!\times} & {\red \ast\times} & {\blue 2222} 

\\ [0.25cm]

%%%%%%%%%%%%%%%%%%%%%%%%%
%      ast 2222
%%%%%%%%%%%%%%%%%%%%%%%%%
\begin{tikzpicture}[scale=0.5,bend angle=45, looseness=1]

	\def\h{0.9238} %height of the octagon
	\def\l{0.76537} %length of side of the octagon = side of square
	\def\c{0.5412} %length of side of the octagon = side of square

	\draw (-2,-1.5) -- (-2,1.5) -- (2,1.5) -- (2,-1.5) -- cycle;\draw (-2,-0.5) -- (2,-0.5) -- (2,0.5) -- (-2,0.5);\draw (0,-1.5) -- (0,1.5);
	
	\draw[fill=black] (-2,-0.5) circle (1.2mm);\draw[fill=black] (-2,1.5) circle (1.2mm);\draw[fill=black] (0,0.5) circle (1.2mm);
	\draw[fill=black] (0,-1.5) circle (1.2mm);\draw[fill=black] (2,1.5) circle (1.2mm);\draw[fill=black] (2,-0.5) circle (1.2mm);

	\draw[fill=white] (-2,-1.5) circle (1.2mm);\draw[fill=white] (-2,0.5) circle (1.2mm);\draw[fill=white] (0,1.5) circle (1.2mm);
	\draw[fill=white] (0,-0.5) circle (1.2mm);\draw[fill=white] (2,0.5) circle (1.2mm);\draw[fill=white] (2,-1.5) circle (1.2mm);
\end{tikzpicture}

&
%%%%%%%%%%%%%%%%%%%%%%%%%
%      22x
%%%%%%%%%%%%%%%%%%%%%%%%%
\begin{tikzpicture}[scale=0.5,bend angle=45, looseness=1]

	\def\h{0.9238} %height of the octagon
	\def\l{0.76537} %length of side of the octagon = side of square
	\def\c{0.5412} %length of side of the octagon = side of square

 %	\draw  (-3,-3) -- (3,-3) -- (3,3) -- (-3,3) -- cycle;
	\draw (-2.25,-1.5) -- (-2.25,1.5) -- (-0.75,2.25) -- (-0.75,-1.5) -- (0.75,-2.25) -- (0.75,1.5) -- (2.25,2.25) -- (2.25,-0.75) -- (0.75,-1.5) ;	\draw (-2.25,-1.5) -- (-0.75,-0.75);	\draw (-0.75,0) -- (0.75,-0.75);
	\draw (-2.25,0) -- (-0.75,0.75);\draw (-0.75,1.5) -- (0.75,0.75);\draw (0.75,0) -- (2.25,0.75);		
	
	\draw[fill=black] (-2.25,-1.5) circle (1.2mm);\draw[fill=black] (-0.75,-1.5) circle (1.2mm);\draw[fill=black] (0.75,-1.5) circle (1.2mm);	\draw[fill=black] (-2.25,0) circle (1.2mm);\draw[fill=black] (-0.75,0) circle (1.2mm);\draw[fill=black] (0.75,0) circle (1.2mm);	\draw[fill=black] (2.25,0) circle (1.2mm);\draw[fill=black] (-2.25,1.5) circle (1.2mm);draw[fill=black] (-0.75,1.5) circle (1.2mm);	\draw[fill=black] (0.75,1.5) circle (1.2mm);\draw[fill=black] (2.25,1.5) circle (1.2mm);

	\draw[fill=white] (-2.25,-0.75) circle (1.2mm);\draw[fill=white] (-2.25,0.75) circle (1.2mm);\draw[fill=white] (-0.75,-0.75) circle (1.2mm);	\draw[fill=white] (-0.75,0.75) circle (1.2mm);\draw[fill=white] (-0.75,2.25) circle (1.2mm);\draw[fill=white] (0.75,-0.75) circle (1.2mm);	\draw[fill=white] (0.75,0.75) circle (1.2mm);\draw[fill=white] (0.75,-2.25) circle (1.2mm);\draw[fill=white] (2.25,-0.75) circle (1.2mm);	\draw[fill=white] (2.25,0.75) circle (1.2mm);\draw[fill=white] (2.25,2.25) circle (1.2mm);
\end{tikzpicture}

&
%%%%%%%%%%%%%%%%%%%%%%%%%
%      22 ast
%%%%%%%%%%%%%%%%%%%%%%%%%
\begin{tikzpicture}[scale=0.5,bend angle=45, looseness=1]

	\def\h{0.9238} %height of the octagon
	\def\l{0.76537} %length of side of the octagon = side of square
	\def\c{0.5412} %length of side of the octagon = side of square

% 	\draw  (-3,-3) -- (3,-3) -- (3,3) -- (-3,3) -- cycle;
	\draw (-2,-1.5) -- (-2,-0.5) -- (2,-0.5) -- (2,0.5) -- (-2,0.5) -- (-2,1.5) -- (1.5,1.5) -- (1.5,0.5);
	\draw (-2,-1.5) -- (1.5,-1.5) -- (1.5,-0.5); \draw (0.5,-0.5) -- (0.5,0.5); \draw (0,-1.5) -- (0,1.5); \draw (-0.5,-1.5) -- (-0.5,-0.5); \draw (-0.5,0.5) -- (-0.5,1.5); \draw (-1.5,-0.5) -- (-1.5,0.5);
		
	\draw[fill=black] (-2,-0.5) circle (1.2mm);\draw[fill=black] (-2,1.5) circle (1.2mm);\draw[fill=black] (-1.5,0.5) circle (1.2mm);
	\draw[fill=black] (-1.5,-1.5) circle (1.2mm);\draw[fill=black] (-0.5,1.5) circle (1.2mm);\draw[fill=black] (-0.5,-0.5) circle (1.2mm);
	\draw[fill=black] (0,0.5) circle (1.2mm);\draw[fill=black] (0,-1.5) circle (1.2mm);\draw[fill=black] (0.5,1.5) circle (1.2mm);
	\draw[fill=black] (0.5,-0.5) circle (1.2mm);\draw[fill=black] (1.5,0.5) circle (1.2mm);\draw[fill=black] (1.5,-1.5) circle (1.2mm);	
	\draw[fill=black] (2,-0.5) circle (1.2mm);	

	\draw[fill=white] (-2,0.5) circle (1.2mm);\draw[fill=white] (-2,-1.5) circle (1.2mm);\draw[fill=white] (-1.5,-0.5) circle (1.2mm);
	\draw[fill=white] (-1.5,1.5) circle (1.2mm);\draw[fill=white] (-0.5,-1.5) circle (1.2mm);\draw[fill=white] (-0.5,0.5) circle (1.2mm);
	\draw[fill=white] (0,1.5) circle (1.2mm);\draw[fill=white] (0,-0.5) circle (1.2mm);\draw[fill=white] (0.5,-1.5) circle (1.2mm);
	\draw[fill=white] (0.5,0.5) circle (1.2mm);\draw[fill=white] (1.5,-0.5) circle (1.2mm);\draw[fill=white] (1.5,1.5) circle (1.2mm);	
	\draw[fill=white] (2,0.5) circle (1.2mm);	
\end{tikzpicture}

& 

%%%%%%%%%%%%%%%%%%%%%%%%%
%      2 ast 22
%%%%%%%%%%%%%%%%%%%%%%%%%
\begin{tikzpicture}[scale=0.45,bend angle=45, looseness=1]

	\draw (-0.25,0.75) -- (0.25,0.75) -- (0.25,2.25) -- (-0.25,2.25) -- cycle;
	\draw (-0.25,-0.75) -- (0.25,-0.75) -- (0.25,-2.25) -- (-0.25,-2.25) -- cycle;
	\draw (-1.75,-0.75) -- (-1.25,-0.75) -- (-1.25,0.75) -- (-1.75,0.75) -- cycle;
	\draw (1.75,-0.75) -- (1.25,-0.75) -- (1.25,0.75) -- (1.75,0.75) -- cycle;
	\draw (-2.75,0.75) -- (-2.75,2.25); \draw (-2.75,-0.75) -- (-2.75,-2.25);
	\draw (2.75,0.75) -- (2.75,2.25); \draw (2.75,-0.75) -- (2.75,-2.25);
	\draw (-1.75,2.75) -- (-1.75,2.25) -- (-1.25,2.25) -- (-1.25,2.75);
	\draw (1.75,2.75) -- (1.75,2.25) -- (1.25,2.25) -- (1.25,2.75);
	\draw (-1.75,-2.75) -- (-1.75,-2.25) -- (-1.25,-2.25) -- (-1.25,-2.75);
	\draw (1.75,-2.75) -- (1.75,-2.25) -- (1.25,-2.25) -- (1.25,-2.75);
	\draw (-2.75,2.25) -- (-1.75,2.75); \draw (2.75, 2.25) -- (1.75, 2.75);\draw (-2.75,-2.25) -- (-1.75,-2.75); \draw (2.75,-2.25) -- (1.75,-2.75);
	\draw (-2.75,1.75) -- (-1.75,2.25);\draw (-2.75,-1.75) -- (-1.75,-2.25);\draw (2.75,-1.75) -- (1.75,-2.25);\draw (2.75,1.75) -- (1.75,2.25);
	\draw (-1.25,2.75) -- (-0.25,2.25);\draw (1.25,2.75) -- (0.25,2.25);\draw (1.25,-2.75) -- (0.25,-2.25);\draw (-1.25,-2.75) -- (-0.25,-2.25);
	\draw (0.25,1.75) -- (1.25,2.25);\draw (0.25,-1.75) -- (1.25,-2.25);\draw (-0.25,1.75) -- (-1.25,2.25);\draw (-0.25,-1.75) -- (-1.25,-2.25);
	\draw (0.25,0.75) -- (1.25,0.25);\draw (0.25,-0.75) -- (1.25,-0.25);\draw (-0.25,0.75) -- (-1.25,0.25);\draw (-0.25,-0.75) -- (-1.25,-0.25);
	\draw (0.25,1.25) -- (1.25,0.75);\draw (-0.25,1.25) -- (-1.25,0.75);\draw (0.25,-1.25) -- (1.25,-0.75);\draw (-0.25,-1.25) -- (-1.25,-0.75);
	\draw (1.75,0.75) -- (2.75,1.25);\draw (-1.75,0.75) -- (-2.75,1.25);\draw (1.75,-0.75) -- (2.75,-1.25);\draw (-1.75,-0.75) -- (-2.75,-1.25);
	\draw (1.75,0.25) -- (2.75,0.75);\draw (-1.75,0.25) -- (-2.75,0.75);\draw (1.75,-0.25) -- (2.75,-0.75);\draw (-1.75,-0.25) -- (-2.75,-0.75);

	\draw[fill=black] (-2.75,1.75) circle (1.2mm);\draw[fill=black] (-2.75,0.75) circle (1.2mm);	\draw[fill=black] (-2.75,-1.25) circle (1.2mm);\draw[fill=black] (-2.75,-2.25) circle (1.2mm);	\draw[fill=white] (-2.75,2.25) circle (1.2mm);\draw[fill=white] (-2.75,1.25) circle (1.2mm);	\draw[fill=white] (-2.75,-0.75) circle (1.2mm);\draw[fill=white] (-2.75,-1.75) circle (1.2mm);	\draw[fill=black] (-1.75,2.75) circle (1.2mm);\draw[fill=black] (-1.75,0.75) circle (1.2mm);	\draw[fill=black] (-1.75,-0.25) circle (1.2mm);\draw[fill=black] (-1.75,-2.25) circle (1.2mm);	\draw[fill=white] (-1.75,2.25) circle (1.2mm);\draw[fill=white] (-1.75,0.25) circle (1.2mm);	\draw[fill=white] (-1.75,-0.75) circle (1.2mm);\draw[fill=white] (-1.75,-2.75) circle (1.2mm);	\draw[fill=white] (-1.25,2.75) circle (1.2mm);\draw[fill=white] (-1.25,0.75) circle (1.2mm);	\draw[fill=white] (-1.25,-0.25) circle (1.2mm);\draw[fill=white] (-1.25,-2.25) circle (1.2mm);	\draw[fill=black] (-1.25,2.25) circle (1.2mm);\draw[fill=black] (-1.25,0.25) circle (1.2mm);	\draw[fill=black] (-1.25,-0.75) circle (1.2mm);\draw[fill=black] (-1.25,-2.75) circle (1.2mm);	\draw[fill=black] (-0.25,2.25) circle (1.2mm);\draw[fill=black] (-0.25,1.25) circle (1.2mm);	\draw[fill=black] (-0.25,-0.75) circle (1.2mm);\draw[fill=black] (-0.25,-1.75) circle (1.2mm);	\draw[fill=white] (-0.25,1.75) circle (1.2mm);\draw[fill=white] (-0.25,0.75) circle (1.2mm);	\draw[fill=white] (-0.25,-1.25) circle (1.2mm);\draw[fill=white] (-0.25,-2.25) circle (1.2mm);	\draw[fill=black] (0.25,1.75) circle (1.2mm);\draw[fill=black] (0.25,0.75) circle (1.2mm);	\draw[fill=black] (0.25,-1.25) circle (1.2mm);\draw[fill=black] (0.25,-2.25) circle (1.2mm);	\draw[fill=white] (0.25,2.25) circle (1.2mm);\draw[fill=white] (0.25,1.25) circle (1.2mm);	\draw[fill=white] (0.25,-0.75) circle (1.2mm);\draw[fill=white] (0.25,-1.75) circle (1.2mm);	\draw[fill=black] (1.25,2.75) circle (1.2mm);\draw[fill=black] (1.25,0.75) circle (1.2mm);	\draw[fill=black] (1.25,-0.25) circle (1.2mm);\draw[fill=black] (1.25,-2.25) circle (1.2mm);	\draw[fill=white] (1.25,2.25) circle (1.2mm);\draw[fill=white] (1.25,0.25) circle (1.2mm);	\draw[fill=white] (1.25,-0.75) circle (1.2mm);\draw[fill=white] (1.25,-2.75) circle (1.2mm);	\draw[fill=white] (1.75,2.75) circle (1.2mm);\draw[fill=white] (1.75,0.75) circle (1.2mm);	\draw[fill=white] (1.75,-0.25) circle (1.2mm);\draw[fill=white] (1.75,-2.25) circle (1.2mm);	\draw[fill=black] (1.75,2.25) circle (1.2mm);\draw[fill=black] (1.75,0.25) circle (1.2mm);	\draw[fill=black] (1.75,-0.75) circle (1.2mm);\draw[fill=black] (1.75,-2.75) circle (1.2mm);	\draw[fill=black] (2.75,2.25) circle (1.2mm);\draw[fill=black] (2.75,1.25) circle (1.2mm);	\draw[fill=black] (2.75,-0.75) circle (1.2mm);\draw[fill=black] (2.75,-1.75) circle (1.2mm);	\draw[fill=white] (2.75,1.75) circle (1.2mm);\draw[fill=white] (2.75,0.75) circle (1.2mm);	\draw[fill=white] (2.75,-1.25) circle (1.2mm);\draw[fill=white] (2.75,-2.25) circle (1.2mm);	

\end{tikzpicture}

& 
%%%%%%%%%%%%%%%%%%%%%%%%%
%       333
%%%%%%%%%%%%%%%%%%%%%%%%%
\begin{tikzpicture}[scale=0.275,bend angle=45, looseness=1]

	\draw (-0.5,-\hh) -- (1,-2*\hh) -- (2,-2*\hh) -- (2.5,-\hh) -- (2,0) -- (1,0) -- (1,2*\hh) -- (0.5,3*\hh) -- (-0.5,3*\hh) -- (-1,2*\hh) -- (-0.5,\hh) -- (-2,0) -- (-2.5,-\hh) -- (-2,-2*\hh) -- (-1,-2*\hh) -- cycle;
	\draw (-4,0) -- (-5,0) -- (-5.5,\hh) -- (-5,2*\hh) -- (-3.5,\hh) -- (-2.5,\hh) -- (-2,2*\hh);
	\draw (-2.5,3*\hh) -- (-3.5,3*\hh) -- (-3.5,5*\hh) -- (-2.5,5*\hh) -- (-2,4*\hh);
	\draw (-1,4*\hh) -- (-0.5,5*\hh) -- (1,4*\hh);
	\draw (2,4*\hh) -- (2.5,5*\hh) -- (3.5,5*\hh) -- (4,4*\hh) -- (2.5,3*\hh) -- (2,2*\hh) -- (2.5,\hh);
	\draw (3.5,\hh) -- (4,2*\hh) -- (5.5,\hh) -- (5,0) -- (4,0);
	\draw (2.5,-3*\hh) -- (4,-2*\hh) -- (3.5,-\hh);
	\draw (2,-4*\hh) -- (2.5,-5*\hh) -- (2,-6*\hh) -- (1,-6*\hh) -- (1,-4*\hh) -- (0.5,-3*\hh) -- (-0.5,-3*\hh);
	\draw (-1,-4*\hh) -- (-0.5,-5*\hh) -- (-2,-6*\hh) -- (-2.5,-5*\hh) -- (-2,-4*\hh);
	\draw (-2.5,-3*\hh) -- (-3.5,-3*\hh) -- (-3.5,-\hh);
	
	\draw (2,0) -- (2.5,\hh) -- (3.5,\hh) -- (4,0) -- (3.5,-\hh) -- (2.5,-\hh) -- cycle;
	\draw (1,2*\hh) -- (0.5,3*\hh) -- (1,4*\hh) -- (2,4*\hh) -- (2.5,3*\hh) -- (2,2*\hh) -- cycle;
	\draw (-2,2*\hh) -- (-2.5,3*\hh) -- (-2,4*\hh) -- (-1,4*\hh) -- (-0.5,3*\hh) -- (-1,2*\hh) -- cycle;
	\draw (-2.5,-\hh) -- (-3.5,-\hh) -- (-4,0) -- (-3.5,\hh) -- (-2.5,\hh) -- (-2,0) -- cycle;
	\draw (-1,-2*\hh) -- (-0.5,-3*\hh) -- (-1,-4*\hh) -- (-2,-4*\hh) -- (-2.5,-3*\hh) -- (-2,-2*\hh) -- cycle;
	\draw (1,-2*\hh) -- (2,-2*\hh) -- (2.5,-3*\hh) -- (2,-4*\hh) -- (1,-4*\hh) -- (0.5,-3*\hh) -- cycle;
%	\foreach \a in {0,60,120,...,300} {\hex{(0,0)}{(\a:3)}}

	\draw[fill=white] (-0.5,-\hh) circle (1.6mm);
	\draw[fill=white] (1,0) circle (1.6mm);
	\draw[fill=white] (-0.5,\hh) circle (1.6mm);
	\draw[fill=white] (2.5,5*\hh) circle (1.6mm);
	\draw[fill=white] (4,4*\hh) circle (1.6mm);
	\draw[fill=white] (4,2*\hh) circle (1.6mm);
	\draw[fill=white] (5.5,\hh) circle (1.6mm);	
	\draw[fill=white] (4,-2*\hh) circle (1.6mm);		
	\draw[fill=white] (2.5,-5*\hh) circle (1.6mm);
	\draw[fill=white] (1,-6*\hh) circle (1.6mm);	
	\draw[fill=white] (-0.5,-5*\hh) circle (1.6mm);	
	\draw[fill=white] (-2,-6*\hh) circle (1.6mm);	
	\draw[fill=white] (-3.5,-3*\hh) circle (1.6mm);	
	\draw[fill=white] (-5,0) circle (1.6mm);	
	\draw[fill=white] (-5,2*\hh) circle (1.6mm);	
	\draw[fill=white] (-3.5,5*\hh) circle (1.6mm);	
	\draw[fill=white] (-3.5,3*\hh) circle (1.6mm);	
	\draw[fill=white] (-0.5,5*\hh) circle (1.6mm);	
	\draw[fill=white] (1,4*\hh) circle (1.6mm);	
	\draw[fill=white] (2.5,3*\hh) circle (1.6mm);	
	\draw[fill=white] (1,2*\hh) circle (1.6mm);	
	\draw[fill=white] (-2,4*\hh) circle (1.6mm);	
	\draw[fill=white] (-0.5,3*\hh) circle (1.6mm);	
	\draw[fill=white] (-2,2*\hh) circle (1.6mm);	
	\draw[fill=white] (-3.5,\hh) circle (1.6mm);	
	\draw[fill=white] (-2,0) circle (1.6mm);	
	\draw[fill=white] (-3.5,-\hh) circle (1.6mm);	
	\draw[fill=white] (-2,-2*\hh) circle (1.6mm);	
	\draw[fill=white] (-0.5,-3*\hh) circle (1.6mm);	
	\draw[fill=white] (-2,-4*\hh) circle (1.6mm);	
	\draw[fill=white] (1,-2*\hh) circle (1.6mm);	
	\draw[fill=white] (2.5,-3*\hh) circle (1.6mm);	
	\draw[fill=white] (1,-4*\hh) circle (1.6mm);	
	\draw[fill=white] (2.5,\hh) circle (1.6mm);	
	\draw[fill=white] (4,0) circle (1.6mm);	
	\draw[fill=white] (2.5,-\hh) circle (1.6mm);

	\draw[fill=black] (-3.5,4*\hh) circle (1.6mm);
	\draw[fill=black] (-2.5,5*\hh) circle (1.6mm);
	\draw[fill=black] (0.25,4.5*\hh) circle (1.6mm);
	\draw[fill=black] (3.5,5*\hh) circle (1.6mm);
	\draw[fill=black] (3.25,3.5*\hh) circle (1.6mm);
	\draw[fill=black] (4.75,1.5*\hh) circle (1.6mm);
	\draw[fill=black] (1,\hh) circle (1.6mm);
	\draw[fill=black] (3.25,-2.5*\hh) circle (1.6mm);
	\draw[fill=black] (2,-6*\hh) circle (1.6mm);
	\draw[fill=black] (1,-5*\hh) circle (1.6mm);
	\draw[fill=black] (0.25,-1.5*\hh) circle (1.6mm);
	\draw[fill=black] (-1.25,-5.5*\hh) circle (1.6mm);
	\draw[fill=black] (-2.5,-5*\hh) circle (1.6mm);
	\draw[fill=black] (-3.5,-2*\hh) circle (1.6mm);
	\draw[fill=black] (-5.5,\hh) circle (1.6mm);
	\draw[fill=black] (-4.25,1.5*\hh) circle (1.6mm);
	\draw[fill=black] (-1.25,0.5*\hh) circle (1.6mm);
	\draw[fill=black] (5,0) circle (1.6mm);
	\draw[fill=black] (2,4*\hh) circle (1.6mm);
	\draw[fill=black] (2,2*\hh) circle (1.6mm);
	\draw[fill=black] (0.5,3*\hh) circle (1.6mm);
	\draw[fill=black] (-1,4*\hh) circle (1.6mm);
	\draw[fill=black] (-1,2*\hh) circle (1.6mm);
	\draw[fill=black] (-2.5,3*\hh) circle (1.6mm);
	\draw[fill=black] (-2.5,\hh) circle (1.6mm);
	\draw[fill=black] (-2.5,-\hh) circle (1.6mm);
	\draw[fill=black] (-4,0) circle (1.6mm);
	\draw[fill=black] (-1,-2*\hh) circle (1.6mm);
	\draw[fill=black] (-1,-4*\hh) circle (1.6mm);
	\draw[fill=black] (-2.5,-3*\hh) circle (1.6mm);
	\draw[fill=black] (2,-2*\hh) circle (1.6mm);
	\draw[fill=black] (2,-4*\hh) circle (1.6mm);
	\draw[fill=black] (0.5,-3*\hh) circle (1.6mm);
	\draw[fill=black] (3.5,\hh) circle (1.6mm);
	\draw[fill=black] (3.5,-\hh) circle (1.6mm);
	\draw[fill=black] (2,0) circle (1.6mm);

\end{tikzpicture}

\\
{\red \ast2222} & {\blue 22}{\red\times} & {\blue 22}{\red\ast} & {\blue 2}{\red\ast22} & {\blue 333} 

\\ [0.25cm]

%%%%%%%%%%%%%%%%%%%%%%%%%
%      ast 333
%%%%%%%%%%%%%%%%%%%%%%%%%
\begin{tikzpicture}[scale=0.3,bend angle=45, looseness=1]

	\draw (1,0) -- (3.5,0);
	\draw (0.5,\hh) -- (60:3.5);
	\foreach \a in {0,60,120,...,300} {\draw (\a:1) -- (\a:3.5);}
	\foreach \a in {0,60,120,...,300} {\draw (\a+12:4.1) -- (\a+48:4.1);}
		
	\hexnode{(0,0)}{(0,0)}
	\foreach \a in {0,60,120,...,300} {\hexnode{(0,0)}{(\a:4.5)}}

%	\draw[fill=black] (5,0) circle (1.2mm);

\end{tikzpicture}

& 
%%%%%%%%%%%%%%%%%%%%%%%%%
%       3 ast 3
%%%%%%%%%%%%%%%%%%%%%%%%%
\begin{tikzpicture}[scale=0.5,bend angle=45, looseness=1]
	% Center hexagon
	\hex{((0,0)}{(0,0)}
	% First ring of hexagons
	\foreach \a in {30,90,...,330} {\hex{(0,0)}{(\a:\RootOfThree)}}
%	\node at (0,-3.5) {$3\!\ast\!3$};
\end{tikzpicture}

&

%%%%%%%%%%%%%%%%%%%%%%%%%
%      442
%%%%%%%%%%%%%%%%%%%%%%%%%
\begin{tikzpicture}[scale=0.5,bend angle=45, looseness=1]

	\def\h{0.9238} %height of the octagon
	\def\l{0.76537} %length of side of the octagon = side of square
	\def\c{0.5412} %length of side of the octagon = side of square

% 	\draw  (-3,-3) -- (3,-3) -- (3,3) -- (-3,3) -- cycle;
	\draw (-2.5,-1.5) -- (-1.5,-1.5) -- (-1.5,-2.5) -- (1.5,-2.5) -- (1.5,-1.5) -- (2.5,-1.5) -- (2.5,1.5) -- (1.5,1.5) -- (1.5,2.5) -- (-1.5,2.5) -- (-1.5,1.5) -- (-2.5,1.5) -- (-2.5,-1.5);\draw (-2.5,0.5) -- (0.5,0.5) -- (0.5,2.5); \draw ((-2.5,-1.5) -- (-.5,-1.5) -- (-0.5,0.5); \draw (1.5, -2.5) -- (1.5,-0.5) -- (-0.5,-0.5); \draw (1.5,1.5) -- (0.5,1.5) -- (0.5,-0.5); \draw (-0.5,-2.5) -- (-0.5,-1.5); \draw (-1.5,1.5) -- (-1.5,0.5); \draw (1.5,-1.5) -- (2.5,-1.5); \draw (1.5,-0.5) -- (2.5,-0.5);
			
	\draw[fill=black] (-2.5,-0.5) circle (1.2mm);\draw[fill=black] (-2.5,1.5) circle (1.2mm);\draw[fill=black] (-1.5,2.5) circle (1.2mm);	\draw[fill=black] (-1.5,0.5) circle (1.2mm);\draw[fill=black] (-1.5,-1.5) circle (1.2mm);\draw[fill=black] (-0.5,-2.5) circle (1.2mm);
	\draw[fill=black] (-0.5,-0.5) circle (1.2mm);\draw[fill=black] (0.5,2.5) circle (1.2mm);\draw[fill=black] (0.5,0.5) circle (1.2mm);	\draw[fill=black] (1.5,1.5) circle (1.2mm);\draw[fill=black] (1.5,-0.5) circle (1.2mm);\draw[fill=black] (1.5,-2.5) circle (1.2mm);	
	\draw[fill=black] (2.5,0.5) circle (1.2mm);\draw[fill=black] (2.5,-1.5) circle (1.2mm);	

	\draw[fill=white] (-2.5,0.5) circle (1.2mm);\draw[fill=white] (-2.5,-1.5) circle (1.2mm);\draw[fill=white] (-1.5,-2.5) circle (1.2mm);	\draw[fill=white] (-1.5,1.5) circle (1.2mm);\draw[fill=white] (-0.5,2.5) circle (1.2mm);\draw[fill=white] (-0.5,0.5) circle (1.2mm);
	\draw[fill=white] (-0.5,-1.5) circle (1.2mm);\draw[fill=white] (0.5,1.5) circle (1.2mm);\draw[fill=white] (0.5,-0.5) circle (1.2mm);		\draw[fill=white] (0.5,-2.5) circle (1.2mm);\draw[fill=white] (1.5,2.5) circle (1.2mm);\draw[fill=white] (1.5,-1.5) circle (1.2mm);	
	\draw[fill=white] (2.5,1.5) circle (1.2mm);\draw[fill=white] (2.5,-0.5) circle (1.2mm);	
	
\end{tikzpicture}

& 
%%%%%%%%%%%%%%%%%%%%%%%%%
%      ast 442
%%%%%%%%%%%%%%%%%%%%%%%%%

\begin{tikzpicture}[scale=0.5,bend angle=45, looseness=1]

	\def\h{0.9238} %height of the octagon
	\def\l{0.76537} %length of side of the octagon = side of square
	\def\c{0.5412} %length of side of the octagon = side of square
 	
		\oct{((0,0)}{(0,0)}
		\octop{((0,0)}{(22.5:1.8478)}
		\octop{((0,0)}{(-3*22.5:1.8478)}
		\oct{((0,0)}{(-22.5:1.8478+0.76539)}
		
		\Square{(0,0)}{(-45+26.565:2.9214)}
		\Square{(0,0)}{(-45-26.565:2.9214)}
		\Square{(0,0)}{(45:1.3065)}
		\Square{(0,0)}{(-135:1.3065)}		
\end{tikzpicture}

&

%%%%%%%%%%%%%%%%%%%%%%%%%
%      4 ast 2
%%%%%%%%%%%%%%%%%%%%%%%%%
\begin{tikzpicture}[scale=0.65,bend angle=45, looseness=1]

 	\draw (0.5,0) -- (0,0.5) -- (0,1) -- (0.5,1.5) -- cycle;
	\draw (0.5,1.5) -- (0.5,2) -- (0,2.5) -- (2,2.5) -- (2.5,2) -- (2.5,1.5) -- (2,1) -- (1.5,1) -- (1,1.5) -- (1,2) -- (1.5,2.5) -- (1,2) -- (0.5,2);
	\draw (1,1.5) -- (1,0) -- (1.5,0.5) -- (1.5,1) -- (1.5,0.5) -- (4,0.5) -- (4,1) -- (2,1) -- (2.5,1.5) -- (3,1.5) -- (3.5,1) -- (3.5,1);
	\draw (2,0.5) -- (2.5,0) -- (3,0) -- (3.5,0.5);
	\draw (3,1.5) -- (3,4) -- (2.5,4) -- (2.5,2) -- (2,2.5) -- (2,3) -- (2.5,3.5) -- (2,3) -- (0,3) -- (0.5,3.5) -- (1,3.5) -- (1.5,3);
	\draw (3,3.5) -- (3.5,3) -- (3.5,2.5) -- (3,2);
	\draw (0,3) -- (-0.5,3) -- (-0.5,2.5) -- (0,2.5);
	\draw (0.5,0) -- (0.5,-0.5) -- (1,-0.5) -- (1,0);

	\draw[fill=white] (-0.5,2.5) circle (0.9mm);
	\draw[fill=white] (0,1) circle (0.9mm);
	\draw[fill=white] (0,3) circle (0.9mm);
	\draw[fill=white] (0.5,0) circle (0.9mm);
	\draw[fill=white] (0.5,2) circle (0.9mm);
	\draw[fill=white] (1,-0.5) circle (0.9mm);
	\draw[fill=white] (1,1.5) circle (0.9mm);
	\draw[fill=white] (1,3.5) circle (0.9mm);
	\draw[fill=white] (1.5,0.5) circle (0.9mm);
	\draw[fill=white] (1.5,2.5) circle (0.9mm);
	\draw[fill=white] (2,1) circle (0.9mm);
	\draw[fill=white] (2,3) circle (0.9mm);
	\draw[fill=white] (2.5,0) circle (0.9mm);
	\draw[fill=white] (2.5,2) circle (0.9mm);
	\draw[fill=white] (2.5,4) circle (0.9mm);
	\draw[fill=white] (3,1.5) circle (0.9mm);
	\draw[fill=white] (3,3.5) circle (0.9mm);
	\draw[fill=white] (3.5,0.5) circle (0.9mm);
	\draw[fill=white] (3.5,2.5) circle (0.9mm);
	\draw[fill=white] (4,1) circle (0.9mm);

	\draw[fill=black] (-0.5,3) circle (0.9mm);
	\draw[fill=black] (0,0.5) circle (0.9mm);
	\draw[fill=black] (0,2.5) circle (0.9mm);
	\draw[fill=black] (0.5,1.5) circle (0.9mm);
	\draw[fill=black] (0.5,3.5) circle (0.9mm);
	\draw[fill=black] (0.5,-0.5) circle (0.9mm);
	\draw[fill=black] (1,0) circle (0.9mm);
	\draw[fill=black] (1,2) circle (0.9mm);
	\draw[fill=black] (1.5,1) circle (0.9mm);
	\draw[fill=black] (1.5,3) circle (0.9mm);
	\draw[fill=black] (2,0.5) circle (0.9mm);
	\draw[fill=black] (2,2.5) circle (0.9mm);
	\draw[fill=black] (2.5,1.5) circle (0.9mm);
	\draw[fill=black] (2.5,3.5) circle (0.9mm);
	\draw[fill=black] (3,0) circle (0.9mm);
	\draw[fill=black] (3,2) circle (0.9mm);
	\draw[fill=black] (3,4) circle (0.9mm);
	\draw[fill=black] (3.5,1) circle (0.9mm);
	\draw[fill=black] (3.5,3) circle (0.9mm);
	\draw[fill=black] (4,0.5) circle (0.9mm);

\end{tikzpicture}

\\

{\red \ast333} & {\blue 3}{\red\ast3}  & {\blue 442} & {\red\ast442} & {\blue4}{\red\ast2} 
\\ [0.25cm]

%%%%%%%%%%%%%%%%%%%%%%%%%
%     632
%%%%%%%%%%%%%%%%%%%%%%%%%

\begin{tikzpicture}[scale=0.45,bend angle=45, looseness=1]

% square lines	
	\foreach \b in {0,60,120,180,240,300} {\draw (60+\b:1) -- (50.85+\b:1.42);}
	\foreach \b in {0,60,120,180,240,300} {\draw (40+\b:0.9) -- (36.8+\b:1.35);}	
	\foreach \b in {0,60,120,180,240,300} {\draw (10+\b:1.77) -- (25+\b:1.925);}
	\foreach \b in {0,60,120,180,240,300} {\draw (7.9+\b:2.1) -- (21.5+\b:2.235);}

% hexagons
	\hexap{(0,0)}{(0:0)}
	\foreach \a in {30,90,150,210,270,330} {\hexap{(0,0)}{(\a+16.5:2.3)}}
	
\end{tikzpicture}

& 
%%%%%%%%%%%%%%%%%%%%%%%%%
%      ast 632
%%%%%%%%%%%%%%%%%%%%%%%%%
\begin{tikzpicture}[scale=0.45,bend angle=45, looseness=1]
	\def\hhex{0.64952}
	\def\rdode{2-\hhex+0.065}
	
% sides of squares (1st)
	\draw (45:\rdode) -- (75:\rdode);	\draw (105:\rdode) -- (135:\rdode);\draw (165:\rdode) -- (195:\rdode);
	\draw (225:\rdode) -- (255:\rdode);\draw (285:\rdode) -- (315:\rdode);\draw (345:\rdode) -- (15:\rdode);
% sides of squares (2nd)	
	\draw (50:2.15) -- (70:2.15);\draw (110:2.15) -- (130:2.15);\draw (170:2.15) -- (190:2.15);
	\draw (230:2.15) -- (250:2.15);\draw (290:2.15) -- (310:2.15);\draw (350:2.15) -- (10:2.15);
	
% exterior edges
	% up-right
	\draw (2+\hhex+0.45,2+\hhex) -- (2+\hhex+0.08,2) -- (2+\hhex+0.45,2-\hhex);
	\draw (38:2.67) -- (36.5:3.375); \draw (22:2.67) -- (23.5:3.375); 
	% down-right
	\draw (2+\hhex+0.45,-2-\hhex) -- (2+\hhex+0.08,-2) -- (2+\hhex+0.45,-2+\hhex);
	\draw (-38:2.67) -- (-36.5:3.375); \draw (-22:2.67) -- (-23.5:3.375); 
	% up-left
	\draw (-2-\hhex-0.45,2+\hhex) -- (-2-\hhex-0.08,2) -- (-2-\hhex-0.45,2-\hhex);
	\draw (38+120:2.67) -- (36.5+120:3.375); \draw (22+120:2.67) -- (23.5+120:3.375); 
	% down-left
	\draw (-2-\hhex-0.45,-2-\hhex) -- (-2-\hhex-0.08,-2) -- (-2-\hhex-0.45,-2+\hhex);
	\draw (-38-120:2.67) -- (-36.5-120:3.375); \draw (-22-120:2.67) -- (-23.5-120:3.375); 
	
% outside hexagons
	\foreach \a in {30,150,270} {\hexp{(0,0)}{(\a:2)}}
	\foreach \a in {90,210,330} {\hexpop{(0,0)}{(\a:2)}}

% exterior nodes 
	%(up-right)
	\draw[fill=white] (2+\hhex+0.45,2+\hhex) circle (1.2mm);
	\draw[fill=white] (2+\hhex+0.45,2-\hhex) circle (1.2mm);
	\draw[fill=black] (2+\hhex+0.08,2) circle (1.2mm);
	%(down-right)	
	\draw[fill=black] (2+\hhex+0.45,-2-\hhex) circle (1.2mm);
	\draw[fill=black] (2+\hhex+0.45,-2+\hhex) circle (1.2mm);
	\draw[fill=white] (2+\hhex+0.08,-2) circle (1.2mm);
	% exterior nodes (up-left)
	\draw[fill=black] (-2-\hhex-0.45,2+\hhex) circle (1.2mm);
	\draw[fill=black] (-2-\hhex-0.45,2-\hhex) circle (1.2mm);
	\draw[fill=white] (-2-\hhex-0.08,2) circle (1.2mm);
	% exterior nodes (down-left)
	\draw[fill=white] (-2-\hhex-0.45,-2-\hhex) circle (1.2mm);
	\draw[fill=white] (-2-\hhex-0.45,-2+\hhex) circle (1.2mm);
	\draw[fill=black] (-2-\hhex-0.08,-2) circle (1.2mm);
	
\end{tikzpicture}
&&
\\
{\blue632}  & {\red\ast632} & &
\end{array}
\]
\caption{Examples of consistent dimer models for the 17 plane symmetry types}
\label{fg:dimer17}
\end{figure}

%\begin{acknowledgements}
%If you'd like to thank anyone, place your comments here
%and remove the percent signs.
%\end{acknowledgements}

% Authors must disclose all relationships or interests that 
% could have direct or potential influence or impart bias on 
% the work: 
%
% \section*{Conflict of interest}
%
% The authors declare that they have no conflict of interest.

% BibTeX users please use one of
%\bibliographystyle{spbasic}      % basic style, author-year citations
\bibliographystyle{amsalpha}      % mathematics and physical sciences
\bibliography{bibs}   % name your BibTeX data base

\newcommand{\etalchar}[1]{$^{#1}$}
\def\cprime{$'$} \def\cprime{$'$}
\providecommand{\bysame}{\leavevmode\hbox to3em{\hrulefill}\thinspace}
\providecommand{\MR}{\relax\ifhmode\unskip\space\fi MR }
% \MRhref is called by the amsart/book/proc definition of \MR.
\providecommand{\MRhref}[2]{%
  \href{http://www.ams.org/mathscinet-getitem?mr=#1}{#2}
}
\providecommand{\href}[2]{#2}
\begin{thebibliography}{FHM{\etalchar{+}}06}

\bibitem[Bax89]{Baxter_ESMSM}
Rodney~J. Baxter, \emph{Exactly solved models in statistical mechanics}, Academic Press Inc. [Harcourt Brace Jovanovich Publishers], London, 1989, Reprint of the 1982 original. \MR{998375 (90b:82001)}

\bibitem[BCQV15]{MR3319545}
Raf Bocklandt, Alastair Craw, and Alexander Quintero~V\'{e}lez, \emph{Geometric {R}eid's recipe for dimer models}, Math. Ann. \textbf{361} (2015), no.~3-4, 689--723. \MR{3319545}

\bibitem[BIU]{Beil-Ishii-Ueda_CDM}
Charlie Beil, Akira Ishii, and Kazushi Ueda, \emph{Cancellativization of dimer models}, arXiv:1301.5410.

\bibitem[Boc12]{Bocklandt_CCDM}
Raf Bocklandt, \emph{Consistency conditions for dimer models}, Glasg. Math. J. \textbf{54} (2012), no.~2, 429--447. \MR{2911380}

\bibitem[Boc13]{MR3010162}
\bysame, \emph{Calabi--{Y}au algebras and weighted quiver polyhedra}, Math. Z. \textbf{273} (2013), no.~1-2, 311--329. \MR{3010162}

\bibitem[Boc16]{MR3509904}
\bysame, \emph{A dimer {ABC}}, Bull. Lond. Math. Soc. \textbf{48} (2016), no.~3, 387--451. \MR{3509904}

\bibitem[Bri02]{MR1893007}
Tom Bridgeland, \emph{Flops and derived categories}, Invent. Math. \textbf{147} (2002), no.~3, 613--632. \MR{MR1893007 (2003h:14027)}

\bibitem[Bro12]{Broomhead}
Nathan Broomhead, \emph{Dimer models and {C}alabi-{Y}au algebras}, Mem. Amer. Math. Soc. \textbf{215} (2012), no.~1011, viii+86. \MR{2908565}

\bibitem[BZ05]{Butti-Zaffaroni_RTD}
Agostino Butti and Alberto Zaffaroni, \emph{{$R$}-charges from toric diagrams and the equivalence of {$a$}-maximization and {$Z$}-minimization}, J. High Energy Phys. (2005), no.~11, 019, 42 pp. (electronic). \MR{MR2187558 (2006i:81154)}

\bibitem[BZ06]{Butti-Zaffaroni_TGQGT}
\bysame, \emph{From toric geometry to quiver gauge theory: the equivalence of {$a$}-maximization and {$Z$}-minimization}, Fortschr. Phys. \textbf{54} (2006), no.~5-6, 309--316. \MR{MR2230600 (2007i:81185)}

\bibitem[CBGS08]{Conway}
John Conway, Heidi Burgiel, and Chaim Goodman-Strauss, \emph{The symmetries of things}, second ed., Graduate Studies in Mathematics, vol.~19, A K Peters/CRC Press, 2008.

\bibitem[CI04]{MR2078369}
Alastair Craw and Akira Ishii, \emph{Flops of {$G$}-{H}ilb and equivalences of derived categories by variation of {GIT} quotient}, Duke Math. J. \textbf{124} (2004), no.~2, 259--307. \MR{MR2078369}

\bibitem[Dav11]{Davison}
Ben Davison, \emph{Consistency conditions for brane tilings}, J. Algebra \textbf{338} (2011), 1--23. \MR{2805177 (2012e:14110)}

\bibitem[FHKV08]{Feng-He-Kennaway-Vafa}
Bo~Feng, Yang-Hui He, Kristian~D. Kennaway, and Cumrun Vafa, \emph{Dimer models from mirror symmetry and quivering amoebae}, Adv. Theor. Math. Phys. \textbf{12} (2008), no.~3, 489--545. \MR{MR2399318 (2009k:81180)}

\bibitem[FHM{\etalchar{+}}06]{Franco-Hanany-Martelli-Sparks-Vegh-Wecht_GTTGBT}
Sebasti{\'a}n Franco, Amihay Hanany, Dario Martelli, James Sparks, David Vegh, and Brian Wecht, \emph{Gauge theories from toric geometry and brane tilings}, J. High Energy Phys. (2006), no.~1, 128, 40 pp. (electronic). \MR{MR2201204}

\bibitem[FHV{\etalchar{+}}06]{Franco-Hanany-Vegh-Wecht-Kennaway_BDQGT}
Sebasti{\'a}n Franco, Amihay Hanany, David Vegh, Brian Wecht, and Kristian~D. Kennaway, \emph{Brane dimers and quiver gauge theories}, J. High Energy Phys. (2006), no.~1, 096, 48 pp. (electronic). \MR{MR2201227}

\bibitem[FR37]{Fowler-Rushbrooke}
R.~H. Fowler and G.~S. Rushbrooke, \emph{An attempt to extend the statistical theory of perfect solutions}, Trans. Faraday Soc. \textbf{33} (1937), 1272 -- 1294.

\bibitem[FU10]{Futaki-Ueda_A-infinity}
Masahiro Futaki and Kazushi Ueda, \emph{Exact {L}efschetz fibrations associated with dimer models}, Math. Res. Lett. \textbf{17} (2010), no.~6, 1029--1040. \MR{2729627}

\bibitem[FV06]{Franco-Vegh_MSGTDM}
Sebasti{\'a}n Franco and David Vegh, \emph{Moduli spaces of gauge theories from dimer models: proof of the correspondence}, J. High Energy Phys. (2006), no.~11, 054, 26 pp. (electronic). \MR{MR2270405 (2007j:81161)}

\bibitem[Gul08]{Gulotta}
Daniel~R. Gulotta, \emph{Properly ordered dimers, {$R$}-charges, and an efficient inverse algorithm}, J. High Energy Phys. (2008), no.~10, 014, 31. \MR{MR2453031 (2010b:81116)}

\bibitem[HHV06]{Hanany-Herzog-Vegh_BTEC}
Amihay Hanany, Christopher~P. Herzog, and David Vegh, \emph{Brane tilings and exceptional collections}, J. High Energy Phys. (2006), no.~7, 001, 44 pp. (electronic). \MR{MR2240899 (2008b:81224)}

\bibitem[HK05]{Hanany-Kennaway_DMTD}
Amihay Hanany and Kristian~D. Kennaway, \emph{Dimer models and toric diagrams}, hep-th/0503149, 2005.

\bibitem[HV07]{Hanany-Vegh}
Amihay Hanany and David Vegh, \emph{Quivers, tilings, branes and rhombi}, J. High Energy Phys. (2007), no.~10, 029, 35. \MR{MR2357949}

\bibitem[IU]{Ishii-Ueda_DMEC}
Akira Ishii and Kazushi Ueda, \emph{Dimer models and exceptional collections}, arXiv:0911.4529.

\bibitem[IU08]{Ishii-Ueda_08}
\bysame, \emph{On moduli spaces of quiver representations associated with dimer models}, Higher dimensional algebraic varieties and vector bundles, RIMS K\^oky\^uroku Bessatsu, B9, Res. Inst. Math. Sci. (RIMS), Kyoto, 2008, pp.~127--141. \MR{MR2509696}

\bibitem[IU11]{Ishii-Ueda_CCDM}
\bysame, \emph{A note on consistency conditions on dimer models}, Higher dimensional algebraic varieties, RIMS K\^oky\^uroku Bessatsu, B24, Res. Inst. Math. Sci. (RIMS), Kyoto, 2011, pp.~143--164.

\bibitem[IU15]{Ishii-Ueda_DMSMC}
\bysame, \emph{Dimer models and the special {McKay} correspondence}, Geom. Topol. \textbf{19} (2015), no.~6, 3405--3466. \MR{3447107}

\bibitem[IU16]{MR3532120}
\bysame, \emph{Dimer models and crepant resolutions}, Hokkaido Math. J. \textbf{45} (2016), no.~1, 1--42. \MR{3532120}

\bibitem[Kat07]{Kato_ZFSFT}
Akishi Kato, \emph{Zonotopes and four-dimensional superconformal field theories}, J. High Energy Phys. (2007), no.~6, 037, 30 pp. (electronic). \MR{MR2326614 (2009a:81201)}

\bibitem[Ken04]{Kenyon_IDM}
Richard Kenyon, \emph{An introduction to the dimer model}, School and {C}onference on {P}robability {T}heory, ICTP Lect. Notes, XVII, Abdus Salam Int. Cent. Theoret. Phys., Trieste, 2004, pp.~267--304 (electronic). \MR{MR2198850 (2006k:82033)}

\bibitem[MR10]{Mozgovoy-Reineke}
Sergey Mozgovoy and Markus Reineke, \emph{On the noncommutative {D}onaldson-{T}homas invariants arising from brane tilings}, Adv. Math. \textbf{223} (2010), no.~5, 1521--1544. \MR{2592501}

\bibitem[MSY06]{Martelli-Sparks-Yau_GDAM}
Dario Martelli, James Sparks, and Shing-Tung Yau, \emph{The geometric dual of {$a$}-maximisation for toric {S}asaki-{E}instein manifolds}, Comm. Math. Phys. \textbf{268} (2006), no.~1, 39--65. \MR{MR2249795 (2008c:53037)}

\bibitem[NdC12]{NdC12}
A.~Nolla~de Celis, \emph{$g$-graphs and special representations for binary dihedral groups in $gl(2,\mathbb{C})$}, Glasgow Math. J. \textbf{55} (2012), no.~1, 23--57.

\bibitem[Sze08]{Szendroi_NCDT}
Bal{\'a}zs Szendr{\H{o}}i, \emph{Non-commutative {D}onaldson-{T}homas invariants and the conifold}, Geom. Topol. \textbf{12} (2008), no.~2, 1171--1202. \MR{MR2403807 (2009e:14100)}

\bibitem[TU10]{Toda-Uehara}
Yukinobu Toda and Hokuto Uehara, \emph{Tilting generators via ample line bundles}, Adv. Math. \textbf{223} (2010), no.~1, 1--29. \MR{2563209}

\bibitem[UY11]{Ueda-Yamazaki_NBTMQ}
Kazushi Ueda and Masahito Yamazaki, \emph{A note on dimer models and {M}c{K}ay quivers}, Comm. Math. Phys. \textbf{301} (2011), no.~3, 723--747. \MR{2784278}

\bibitem[UY13]{Ueda-Yamazaki_toricdP}
\bysame, \emph{Homological mirror symmetry for toric orbifolds of toric del {P}ezzo surfaces}, J. Reine Angew. Math. \textbf{680} (2013), 1--22. \MR{3100950}

\bibitem[vdB04a]{MR2077594}
Michel van~den Bergh, \emph{Non-commutative crepant resolutions}, The legacy of {N}iels {H}enrik {A}bel, Springer, Berlin, 2004, pp.~749--770. \MR{2077594 (2005e:14002)}

\bibitem[VdB04b]{MR2057015}
Michel Van~den Bergh, \emph{Three-dimensional flops and noncommutative rings}, Duke Math. J. \textbf{122} (2004), no.~3, 423--455. \MR{MR2057015 (2005e:14023)}

\bibitem[Wun87]{Wunram2}
J.~Wunram, \emph{Reflexive modules on cyclic quotient surface singularities}, Singularities, representation of algebras, and vector bundles ({L}ambrecht, 1985), Lecture Notes in Math., vol. 1273, Springer, Berlin, 1987, pp.~221--231. \MR{MR915177 (88m:14023)}

\bibitem[Wun88]{Wunram}
J{\"u}rgen Wunram, \emph{Reflexive modules on quotient surface singularities}, Math. Ann. \textbf{279} (1988), no.~4, 583--598. \MR{MR926422 (89g:14029)}

\bibitem[Yam08]{MR2423955}
Masahito Yamazaki, \emph{Brane tilings and their applications}, Fortschr. Phys. \textbf{56} (2008), no.~6, 555--686. \MR{2423955 (2009e:81205)}

\bibitem[YY93]{YY93}
Stephen S.-T. Yau and Yung Yu, \emph{Gorenstein quotient singularities in dimension three}, Mem. Amer. Math. Soc. \textbf{105} (1993), no.~505, viii+88. \MR{1169227 (94b:14045)}

\end{thebibliography}

% Non-BibTeX users please use
%\begin{thebibliography}{}
%
% and use \bibitem to create references. Consult the Instructions
% for authors for reference list style.
%
%\bibitem{RefJ}
% Format for Journal Reference
%Author, Article title, Journal, Volume, page numbers (year)
% Format for books
%\bibitem{RefB}
%Author, Book title, page numbers. Publisher, place (year)
% etc
%\end{thebibliography}

\noindent
Akira Ishii

Graduate School of Mathematics,
Nagoya University,
Furocho, Chikusa-ku, Nagoya, 464-8602,
Japan

{\em e-mail address}\ : \ akira141@math.nagoya-u.ac.jp

\ \\

\noindent
\'Alvaro Nolla de Celis

Universidad Aut\'onoma de Madrid (UAM),
Faculty of Teachers Training and Education,
Tom\'as y Valiente 3,
28049 Madrid,
Spain.

{\em e-mail address}\ : \ alvaro.nolla@uam.es

\ \\

\noindent
Kazushi Ueda

Graduate School of Mathematical Sciences,
The University of Tokyo,
3-8-1,
Meguro-ku,
Tokyo,
153-8914,
Japan.

{\em e-mail address}\ : \  kazushi@ms.u-tokyo.ac.jp

\end{document}